\let\llrcorner\lrcorner
\newtheorem{thm}{Theorem}[section]
\newtheorem{prop}[thm]{Proposition}
\newtheorem{lem}[thm]{Lemma}
\newtheorem{cor}[thm]{Corollary}
\numberwithin{equation}{section}
\theoremstyle{definition}
\newtheorem{definition}[thm]{Definition}
\newtheorem{remark}[thm]{Remark}
\newtheorem{conj}[thm]{Conjecture}
\newtheorem{ex}[thm]{Example}
\newcommand{\Db}{{\rm D}^{\rm b}}
\newcommand{\Aut}{{\rm Aut}}
\newcommand{\rk}{{\rm rk}}
\newcommand{\Hom}{{\rm Hom}}
\newcommand{\Ext}{{\rm Ext}}
\newcommand{\K}{{\rm K}}
\newcommand{\cal}{\mathcal}
\newcommand{\ka}{{\cal A}}
\newcommand{\kc}{{\cal C}}
\newcommand{\ke}{{\cal E}}
\newcommand{\kl}{{\cal L}}
\newcommand{\ko}{{\cal O}}
\newcommand{\kt}{{\cal T}}
\newcommand{\ks}{{\cal S}}
\newcommand{\kx}{{\cal X}}
\newcommand{\ZZ}{\mathbb{Z}}
\newcommand{\QQ}{\mathbb{Q}}
\newcommand{\RR}{\mathbb{R}}
\newcommand{\CC}{\mathbb{C}}
\newcommand{\HoH}{H\!H}
\newcommand{\PP}{\mathbb{P}}
\newcommand{\OO}{{\rm O}}
\renewcommand{\to}{\xymatrix@1@=15pt{\ar[r]&}}
\newcommand{\lto}{\xymatrix@1@=15pt{&\ar[l]}}
\renewcommand{\rightarrow}{\xymatrix@1@=15pt{\ar[r]&}}
\renewcommand{\mapsto}{\xymatrix@1@=15pt{\ar@{|->}[r]&}}
\newcommand{\mapslto}{\xymatrix@1@=15pt{&\ar@{|->}[l]&}}
\renewcommand{\twoheadrightarrow}{\xymatrix@1@=18pt{\ar@{->>}[r]&}}
\renewcommand{\hookrightarrow}{\xymatrix@1@=15pt{\ar@{^(->}[r]&}}
\newcommand{\hook}{\xymatrix@1@=15pt{\ar@{^(->}[r]&}}
\newcommand{\congpf}{\xymatrix@1@=15pt{\ar[r]^-\sim&}}
\renewcommand{\cong}{\simeq}
\def\blfootnote{\xdef\@thefnmark{}\@footnotetext}
\begin{document}

\title{The K3 category of a cubic fourfold}

\author[D.\ Huybrechts]{Daniel Huybrechts}

\address{Mathematisches Institut,
Universit{\"a}t Bonn, Beringstr.\ 1, 53115 Bonn, Germany}
\email{huybrech@math.uni-bonn.de}

\begin{abstract} \noindent
Smooth cubic hypersurfaces $X\subset \PP^5$ (over $\CC$) are  linked to K3 surfaces  via their Hodge structures, due to work of Hassett, and via a subcategory $\ka_X\subset\Db(X)$, due to work of Kuznetsov.
The relation between these two viewpoints has recently been elucidated by  Addington and Thomas.
 
In this paper, both aspects are studied further and extended to  twisted K3 surfaces, which in particular
allows us to determine the group of autoequivalences of $\ka_X$ for the  general cubic fourfold.
Furthermore, we prove finiteness results for
cubics with equivalent K3 categories and study periods of cubics in terms of generalized K3 surfaces.
 \vspace{-2mm}
\end{abstract}

\maketitle
\blfootnote{This work was supported by the SFB/TR 45 `Periods,
Moduli Spaces and Arithmetic of Algebraic Varieties' of the DFG
(German Research Foundation).}

\marginpar{}

\section{Introduction}
As shown by Kuznetsov  \cite{Kuz1,Kuz5}, the bounded derived category $\Db(X)$ of coherent sheaves on a smooth cubic hypersurface
$X\subset\PP^5$ contains, as the semi-orthogonal complement of  three line bundles,
a full triangulated subcategory $$\ka_X\coloneqq\langle\ko,\ko(1),\ko(2)\rangle^\perp\subset\Db(X)$$ that behaves in many respects like the bounded
derived category $\Db(S)$ of coherent sheaves on a K3 surface $S$. In fact, for certain special cubics $\ka_X$ is
equivalent to $\Db(S)$ or, more generally, to the derived category $\Db(S,\alpha)$ of $\alpha$-twisted
sheaves on a K3 surface $S$. Kuznetsov also conjectured that $\ka_X$ is of the form $\Db(S)$ if and only
if $X$ is rational. Neither of the two implications has been verified until now, although Addington and Thomas recently have shown in
\cite{AT} that the conjecture is (generically) equivalent to a conjecture attributed to Hassett \cite{HassettComp} describing rational cubics in terms of their periods.

\subsection{} This paper is not concerned with the rationality of cubic fourfolds, but with basic results on $\ka_X$. Ideally, one would like
to have a theory for $\ka_X$ that parallels the theory for $\Db(S)$ and $\Db(S,\alpha)$. In particular, one would like to have analogues
of the following results and conjectures:
\medskip

-- \emph{For a given twisted K3 surface $(S,\alpha)$ there exist only finitely many isomorphism
classes of twisted K3 surfaces $(S',\alpha')$ with $\Db(S,\alpha)\cong\Db(S',\alpha')$.}
\smallskip

-- \emph{Two twisted K3 surfaces $(S,\alpha)$, $(S',\alpha')$ are derived equivalent,
i.e.\ there exists a $\CC$-linear exact equivalence $\Db(S,\alpha)\cong\Db(S',\alpha')$, if and only if there
exists an orientation preserving Hodge isometry $\widetilde H(S,\alpha,\ZZ)\cong\widetilde H(S',\alpha',\ZZ)$.}
\smallskip

-- \emph{The group of linear exact autoequivalences of $\Db(S,\alpha)$ admits a natural representation $\rho\colon\Aut(\Db(S,\alpha))\to\Aut(\widetilde H(S,\alpha,\ZZ))$,
which is surjective up to index two. Moreover, up to finite index  $\Aut(\Db(S,\alpha))$  is
conjecturally described as a fundamental group of a certain Deligne--Mumford stack.}

\medskip

Most of the theory for untwisted K3 surfaces is due to Mukai \cite{MukaiVB} and Orlov \cite{OrlovK3}, whereas the basic theory of twisted K3 surfaces was
developed in \cite{HuSt,HuSt2}. See also \cite{HuyFM,HuSeat} for surveys and further references.  Originally, the generalization to twisted K3 surfaces was motivated by
the existence of non-fine moduli spaces \cite{Caltw}. However, more recently it has become clear that allowing twists has quite unexpected applications,
e.g.\ to the Tate conjecture \cite{Char,MLS}. Crucial for the purpose of this article is the observation proved together with Macr\`i and Stellari
\cite{HMS2}
that ${\rm Ker}(\rho)=\ZZ[2]$ for many
twisted K3 surfaces $(S,\alpha)$. Note that for untwisted projective K3 surfaces the kernel is always highly non-trivial and,
in particular, not finitely generated. The conjectural
description of $\Aut(\Db(S))$ has in this case only been achieved for K3 surfaces of Picard rank one \cite{BB}.

\subsection{} As a direct attack on $\ka_X$ is difficult, we follow Addington and Thomas \cite{AT} and
reduce the study of $\ka_X$ via deformation to the case of  (twisted) K3 surfaces. Central to our discussion
is the  Hodge structure $\widetilde H(\ka_X,\ZZ)$ associated with $\ka_X$  introduced in \cite{AT} as the
analogue of the Mukai--Hodge structure $\widetilde H(S,\ZZ)$ of weight two on the full cohomology $H^*(S,\ZZ)$ of a K3 surface $S$ or of the twisted
version $\widetilde H(S,\alpha,\ZZ)$ introduced in \cite{HuSt}. For example, 
any FM-equivalence $\ka_X\cong\ka_{X'}$ induces a Hodge isometry $\widetilde H(\ka_X,\ZZ)\cong\widetilde H(\ka_{X'},\ZZ)$, cf.\ Proposition
\ref{prop:equivHodgeA}. This suffices to prove:

\begin{thm}\label{thm:finiteFM}
For any given smooth cubic $X\subset \PP^5$ there are only finitely many cubics $X'\subset \PP^5$ up to isomorphism
for which there exists a FM-equivalence $\ka_X\cong\ka_{X'}$. See Corollary \ref{cor:finiteFM}.
\end{thm}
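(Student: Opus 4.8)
\emph{Proof strategy.} The plan is to convert this geometric statement into a lattice‑theoretic finiteness statement and then feed it into the global Torelli theorem for cubic fourfolds. By Proposition \ref{prop:equivHodgeA}, a FM‑equivalence $\ka_X\cong\ka_{X'}$ induces a Hodge isometry $\widetilde H(\ka_X,\ZZ)\cong\widetilde H(\ka_{X'},\ZZ)$. Following Addington--Thomas \cite{AT}, the lattice $\widetilde H(\ka_{X'},\ZZ)$ is, independently of $X'$, isometric to one fixed rank‑$24$ Mukai lattice $\widetilde\Lambda$, and the Tate twist $H^4_{\mathrm{prim}}(X',\ZZ)(1)$ sits inside it as the orthogonal complement of a fixed positive‑definite $A_2$‑lattice of algebraic classes. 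Consequently the transcendental part of $\widetilde H(\ka_{X'},\ZZ)$ --- the smallest primitive sublattice whose complexification contains the line $\widetilde H^{2,0}$ --- agrees with $T(X')(1)$, the Tate twist of the transcendental lattice $T(X')\subset H^4(X',\ZZ)$ of the cubic. Hence the induced Hodge isometry restricts to a Hodge isometry $T(X)\cong T(X')$, and it suffices to bound the number of smooth cubics $X'$ with a \emph{prescribed} polarized weight‑four Hodge structure $T$ (with $\dim T^{3,1}=1$) as transcendental lattice.

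Now fix such a $T$. Its underlying lattice is fixed, and it admits a primitive embedding into the fixed lattice $\Lambda_0$ of signature $(20,2)$ abstractly isometric to $H^4_{\mathrm{prim}}(X',\ZZ)$; the rank and discriminant‑form constraints are mild enough that Nikulin's theory of primitive embeddings applies and yields only finitely many such embeddings up to the action of $\OO(\Lambda_0)$. Each embedding $T\subset\Lambda_0$ determines its orthogonal complement (forced to consist of $(2,2)$‑classes) and hence, together with the fixed Hodge structure on $T$, a polarized Hodge structure on $\Lambda_0$. Therefore the Hodge structures on $H^4_{\mathrm{prim}}(X',\ZZ)$ that can occur for a cubic $X'$ with $T(X')\cong T$ form a union of finitely many $\OO(\Lambda_0)$‑orbits. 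Since the monodromy group of the family of smooth cubic fourfolds is a subgroup of finite index in $\OO(\Lambda_0)$ (Beauville), these are finitely many orbits under the monodromy group as well, i.e.\ they correspond to finitely many points of the relevant arithmetic period quotient. By the global Torelli theorem for cubic fourfolds (Voisin) the period map is injective on the moduli space, so each such point is attained by at most one cubic, and the asserted finiteness follows.

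I expect the step needing genuine care to be the last one: the passage from ``Hodge structure up to isometry'' to ``period point up to monodromy'', which uses both the finite‑index description of the monodromy group of cubic fourfolds and the precise (global, not merely generic) form of the Torelli theorem. The remaining ingredients --- the exact shape of $\widetilde H(\ka_{X'},\ZZ)$ and the identification of its transcendental part with $T(X')(1)$, and the verification that Nikulin's embedding count applies here --- are essentially bookkeeping. This argument is the direct analogue of the proof that a (twisted) K3 surface has only finitely many Fourier--Mukai partners, with $H^4_{\mathrm{prim}}$ and the cubic Torelli theorem in place of the transcendental lattice and the Torelli theorem for K3 surfaces; one could equivalently run it using primitive embeddings into $\widetilde\Lambda$ directly.
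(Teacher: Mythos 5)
Your proposal is correct and follows essentially the same strategy as the paper's proof of Corollary \ref{cor:finiteFM}: reduce via Proposition \ref{prop:equivHodgeA} to the Hodge isometry class of $\widetilde H(\ka_X,\ZZ)$, extract the transcendental lattice as the key invariant, establish finiteness of the possible primitive embeddings of it into the fixed ambient lattice, and conclude with Voisin's Global Torelli theorem. The only differences are presentational: where you invoke Nikulin's classification of primitive embeddings into the non-unimodular lattice $h^\perp\cong A_2^\perp$ as a black box, the paper proves the required finiteness of orthogonal complements and of finite-index overlattices by hand, precisely because the standard unimodular argument of Bridgeland--Maciocia needs tweaking there; and where you finish by counting orbits in the arithmetic period quotient using the finite index of the monodromy group, the paper instead reduces to the case where the Hodge isometry of transcendental lattices extends to one of $H^4(X,\ZZ)_{\rm prim}$.
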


Recall that due to a result of Bondal and Orlov a smooth cubic $X\subset\PP^5$ itself does not admit any non-isomorphic Fourier--Mukai partners.
This is no longer true if $\Db(X)$ is replaced by its K3 category $\ka_X$. In particular, there exist FM-equivalences $\ka_X\cong\ka_{X'}$ that do not extend to
equivalences $\Db(X)\cong\Db(X')$. However, we will also see that  general cubics $X$ and $X'$, i.e.\ those for which
$\rk\,H^{2,2}(X,\ZZ)=1$,  admit a FM-equivalence
$\ka_X\cong\ka_{X'}$ if and only if $X\cong X'$, see Theorem \ref{thm:HodgeA} or Corollary \ref{cor:verygeneralnoFM}.

%
%

\smallskip

The following can be seen as an easy analogue of the result of Bayer and Bridgeland \cite{BB} describing
$\Aut(\Db(S))$ for  general K3 surfaces $S$ (namely those with $\rho(S)=1$) or rather of \cite{HMS} describing this
group for general non-projective K3 surfaces or twisted projective K3 surfaces $(S,\alpha)$ without $(-2)$-classes (see Section \ref{sec:ProofAut}).
 
\begin{thm}\label{thm:noFMvg}
{\rm i)} For the very general\footnote{A property holds for
the \emph{very general} cubic if it holds for cubics in the complement of countably many
proper closed subsets of the space of cubics under consideration. It holds
for the \emph{generic} cubic if it holds for a Zariski open, dense subset.} smooth cubic $X\subset\PP^5$ the group $\Aut_s(\ka_X)$
of symplectic FM-autoequivalences
is infinite cyclic with
$$\Aut_s(\ka_X)/\ZZ\cdot[2]\cong\ZZ/3\ZZ.$$
Alternatively, the group of all FM-autoequivalences $\Aut(\ka_X)$ is infinite cyclic containing
$\ZZ\cdot[1]$ as a subgroup of index three.

{\rm ii)}
Moreover, the induced action on $\widetilde H(\ka_X,\ZZ)$ of any FM-autoequivalence $\Phi\colon\ka_X\congpf\ka_X$ of a non-special cubic
preserves the natural orientation.
\end{thm}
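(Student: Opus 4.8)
The strategy is to compute the lattice $\widetilde H(\ka_X,\ZZ)$ and its Hodge structure explicitly for the very general cubic, transport the question about autoequivalences into a question about Hodge isometries via Proposition \ref{prop:equivHodgeA}, and then import the known structure theory of autoequivalences of (twisted) K3 surfaces. First I would recall that for a smooth cubic fourfold $X$ the lattice $\widetilde H(\ka_X,\ZZ)$ is abstractly isometric to the Mukai lattice $\widetilde\Lambda = U^{\oplus 4}\oplus E_8(-1)^{\oplus 2}$, and that the weight-two Hodge structure is determined by the position of $H^{3,1}(X)$, i.e.\ by the transcendental sublattice. For the very general cubic $\rk\, H^{2,2}(X,\ZZ)=1$, so the algebraic part of $\widetilde H(\ka_X,\ZZ)$ has rank $2$: it is spanned by the two canonical algebraic classes $\lambda_1,\lambda_2$ coming from $\ka_X$, whose Gram matrix is the well-known $A_2$-type matrix $\begin{pmatrix}2 & 1\\ 1 & 2\end{pmatrix}$. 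The transcendental lattice is then its orthogonal complement in $\widetilde\Lambda$, a lattice of rank $21$ carrying an irreducible Hodge structure with $h^{2,0}=1$.

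The heart of the argument is then purely lattice-theoretic. A symplectic FM-autoequivalence $\Phi$ induces, by Proposition \ref{prop:equivHodgeA}, a Hodge isometry of $\widetilde H(\ka_X,\ZZ)$ fixing the class $\lambda_2$ (or $(0,0,1)$-type generator) that encodes the symplectic normalization, and acting trivially — up to sign — on the transcendental part, because the transcendental Hodge structure is irreducible of rank $>1$ and any Hodge isometry of such a structure is $\pm\id$ (here I would cite the standard argument: an eigenvalue of a Hodge isometry on an irreducible transcendental lattice is a root of unity, and irreducibility over $\QQ$ forces it to be $\pm 1$, and then $+1$ on the whole lattice since it must fix the line $H^{2,0}$). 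Hence $\Phi_*$ restricted to the transcendental part is the identity, and $\Phi_*$ is determined by its action on the rank-$2$ algebraic lattice $A_2$, which must preserve the Hodge-theoretic constraints and extend to an isometry of $\widetilde\Lambda$ acting as $+\id$ on the complement. The group of isometries of $A_2$ preserving these constraints is computed to be cyclic of order $6$, and dividing by the shift $[2]$ (which acts as $-\id$ on $\widetilde H$) one gets $\ZZ/3\ZZ$; lifting each such lattice isometry to an actual autoequivalence is exactly where the deformation argument of Addington--Thomas is invoked, together with the description of $\Aut$ of a twisted K3 without $(-2)$-classes from \cite{HMS}, giving that $\Aut_s(\ka_X)$ is infinite cyclic with the stated quotient. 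Part of this step requires knowing that $\Aut_s(\ka_X)\to\Aut(\widetilde H(\ka_X,\ZZ))$ has kernel exactly $\ZZ\cdot[2]$, which is the analogue for $\ka_X$ of the result $\KE(\rho)=\ZZ[2]$ recalled in the introduction; for the very general cubic this follows because there are no spherical objects other than the obvious ones, the relevant moduli spaces having no $(-2)$-classes.

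Part (ii) is an orientation statement. Here I would argue that the orientation of the positive four-dimensional subspace of $\widetilde H(\ka_X,\ZZ)_\RR$ is determined by a continuous datum, and that FM-equivalences act continuously in families; since the very general cubic (where orientation-preservation is forced by the cyclic structure in part (i), each generator of the order-$3$ group acting with determinant $+1$ on the positive part) is dense, and the property of preserving orientation is closed, it propagates to all non-special cubics by the deformation argument of \cite{AT} connecting any cubic to one with an associated twisted K3 surface. I expect the \textbf{main obstacle} to be the surjectivity/lifting part of (i): showing that the order-$3$ lattice automorphism of $A_2$ is actually realized by an autoequivalence of $\ka_X$ — for K3 surfaces this is the content of Orlov's and Bridgeland's results plus the Fourier--Mukai kernel machinery, and for $\ka_X$ one does not have such kernels directly, so one must deform $X$ to a cubic whose $\ka_X$ is $\Db(S,\alpha)$, realize the autoequivalence there, and deform it back, checking that it remains an equivalence of the generic $\ka_X$ — this is the delicate analytic/deformation input, as opposed to the comparatively routine lattice computation.
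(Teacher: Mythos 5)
Your overall strategy for part (i) --- transport the question to $\widetilde H(\ka_X,\ZZ)$ via Proposition \ref{prop:equivHodgeA}, deform $X$ into a special divisor where $\ka\cong\Db(S,\alpha)$, and import the structure theory of autoequivalences from there --- is indeed the paper's strategy, and you correctly single out the deformation of Fourier--Mukai kernels as the hard input. However, several of your steps have genuine gaps. First, the lattice computation is off: the isometries of $A_2$ that glue with $\id$ on $A_2^\perp$ to an isometry of $\widetilde\Lambda$ are those acting trivially on the discriminant group, i.e.\ ${\mathfrak S}_3$ (non-abelian of order $6$, not cyclic), and the double shift $[2]$ acts as $+\id$ on $\widetilde H(\ka_X,\ZZ)$, not as $-\id$ (it is $[1]$ that acts as $-\id$, and $-\id\notin{\mathfrak S}_3$). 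Cutting ${\mathfrak S}_3$ down to ${\mathfrak A}_3\cong\ZZ/3\ZZ$ is not a lattice computation: it requires orientation preservation, which the paper obtains by first deforming $\Phi$ into some $\kc_d$ with $d$ satisfying ($\ast$$\ast$), where $\ka\cong\Db(S)$ and \cite{HMS} applies. Second, and more seriously, the kernel statement ${\rm Ker}(\rho)=\ZZ\cdot[2]$ does not ``follow because there are no spherical objects'' on the very general cubic itself --- there is no direct structure theory for $\Aut(\ka_X)$ there. One must again deform into a special divisor where $\ka\cong\Db(S,\alpha)$, but doing so makes $N$ jump to rank three, where $(-2)$-classes can and often do appear (e.g.\ for $d=24$). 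One needs the existence of infinitely many $d$ satisfying ($\ast$$\ast'$) with $D_d\not\subset D_{\rm sph}$ (Proposition \ref{prop:Dsphnot}, Corollary \ref{cor:locustw}, using $9\mid d$); this arithmetic input is absent from your plan, and only with it does \cite{HMS2} give $\Phi_t\cong[2k]$.

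Part (ii) does not work as you state it. A given FM-autoequivalence of a fixed non-special cubic is not a member of a family over a dense subset of $\kc$, so one cannot ``propagate a closed condition from the very general cubic''; Theorem \ref{thm:Defo1} only applies once one knows the induced Hodge isometry stays of Hodge type under parallel transport, which for a non-symplectic $\Phi$ is precisely what is at issue. The paper's route is: compose with $[1]$ so that $\Phi^H$ acts trivially on the discriminant, note that the resulting Hodge isometry of $T(\ka_X)\cong A_2^\perp$ extends to $H^4(X,\ZZ)$ fixing $h$ and is therefore induced by an automorphism of $X$ by the Global Torelli theorem for cubic fourfolds \cite{Voisin}; this reduces to the symplectic case, where the deformation argument of part (i) applies. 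The Global Torelli step is indispensable and missing from your proposal. (Minor points: the transcendental lattice of the very general cubic has rank $22$, not $21$, and the last step of (i) needs the countability of the components of the space of FM-kernels to pass from ``a Zariski open set for each $\Phi$'' to a statement about the very general cubic.)
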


In fact,  for every smooth cubic $\Aut_s(\ka_X)$ contains an infinite cyclic group with $\ZZ\cdot[2]$ as a subgroup of index three,  see Corollary \ref{cor:Z3Z}.
The theory of twisted K3 surfaces is crucial for the theorem, as eventually the problem is reduced to \cite{HMS2} which deals with general twisted K3 surfaces. 

The group $\Aut_s(\ka_X)$ of an arbitrary cubic is described by an analogue of Brigdeland's conjecture, 
see Conjecture \ref{conj:Brid}.

\subsection{} In \cite{HassettComp} Hassett showed that in the moduli space $\kc$
of smooth cubics, the set of those cubics $X$ for which there exists a primitive positive plane   $K_d\subset H^{2,2}(X,\ZZ)$
of discriminant $d$ containing the class ${\rm c}_1(\ko(1))^2$ is
an irreducible divisor $\kc_d\subset\kc$. Moreover, $\kc_d$ is not empty if and only if\vskip0.2cm

($\ast$) $d\equiv 0,2\, (6)$ and $d>6$.
\vskip0.2cm

Cubics parametrized by the divisors $\kc_d$ are called special.
Hassett also introduced the numerical condition\vskip0.2cm

($\ast$$\ast$) $d$ is even and $d/2$ is not divisible by $9$ or any prime
$p\equiv 2\, (3)$.\footnote{This condition was  originally stated as:
$d\equiv 0,2\, (6)$ and $d$ not divisible by $4,9$ or any prime $2\ne p\equiv 2\, (3)$.
The reformulation has been suggested by the referee.}
\vskip0.2cm \noindent

\noindent and proved that ($\ast$$\ast$) is equivalent to the orthogonal complement of the corresponding lattice $K_d$
in $H^4(X,\ZZ)$ being (up to sign) Hodge isometric to the 
primitive Hodge structure $H^2(S,\ZZ)_{\rm prim}$ of a polarized K3 surface. In \cite{AT} the condition was shown to be equivalent to the existence of
a Hodge isometry $\widetilde H(\ka_X,\ZZ)\cong \widetilde H(S,\ZZ)$ for some K3 surface $S$. We prove the following twisted version of it
(cf.\ Proposition \ref{prop:DtwDd}):

\begin{thm}\label{thm:twHas}
For a smooth cubic $X\subset \PP^5$ the Hodge structure $\widetilde H(\ka_X,\ZZ)$ is Hodge isometric
to the Hodge structure $\widetilde H(S,\alpha,\ZZ)$ of a twisted K3 surface $(S,\alpha)$ if and only if 
$X\in \kc_d$ with
\vskip0.2cm
\emph{($\ast$$\ast'$)} $d$ is even and  in 
the prime factorization $d/2=\prod p_i^{n_i}$ one has $n_i\equiv0\, (2)$ for all primes $p_i\equiv2\, (3)$.
\end{thm}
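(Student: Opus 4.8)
The plan is to turn the Hodge-theoretic statement into an assertion about a single rank-three lattice attached to $\kc_d$ and then to settle it by the Hasse--Minkowski principle. The first, lattice-theoretic, half (which I expect to be essentially Proposition~\ref{prop:DtwDd}) is: $\widetilde H(\ka_X,\ZZ)$ is Hodge isometric to $\widetilde H(S,\alpha,\ZZ)$ for \emph{some} twisted K3 surface $(S,\alpha)$ if and only if the algebraic part $\widetilde H^{1,1}(\ka_X,\ZZ)$ (the $(1,1)$-part of $\widetilde H(\ka_X,\ZZ)$) represents zero. The implication `$\Rightarrow$' is trivial, since $v(\ko_x)=(0,0,1)$ is an isotropic $(1,1)$-class on any $\widetilde H(S,\alpha,\ZZ)$. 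For `$\Leftarrow$', a non-trivial isotropic class may be taken primitive, and then --- $\widetilde H(\ka_X,\ZZ)$ being even unimodular --- it has divisibility one and is therefore contained in a hyperbolic plane $U\subset\widetilde H(\ka_X,\ZZ)$; the quotient $v^\perp/v\cong U^\perp$ carries a weight-two Hodge structure of K3 type on a lattice isometric to the K3 lattice, hence, by surjectivity of the period map, equals $H^2(S,\ZZ)$ for a K3 surface $S$, necessarily projective because the presence of an isotropic class forces $\rk\,\widetilde H^{1,1}(\ka_X,\ZZ)\ge 3$. The position of $\widetilde H^{2,0}(\ka_X)$ relative to $U$ then yields, through the $B$-field formalism for twisted K3 surfaces (the rationality of the $B$-field being automatic here), a Brauer class $\alpha$ with $\widetilde H(\ka_X,\ZZ)\cong\widetilde H(S,\alpha,\ZZ)$; this is the twisted derived-Torelli package behind Proposition~\ref{prop:equivHodgeA}. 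In particular $X$ must be special, for otherwise $\widetilde H^{1,1}(\ka_X,\ZZ)$ is the positive definite lattice $\langle\lambda_1,\lambda_2\rangle\cong A_2$ spanned by the two distinguished algebraic classes, which is anisotropic.

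Next I would descend to $\kc_d$: for $X\in\kc_d$ there is a primitive rank-three sublattice $A_d\subset\widetilde H^{1,1}(\ka_X,\ZZ)$ containing $\langle\lambda_1,\lambda_2\rangle$, of signature $(2,1)$ and $|\det A_d|=d$, which depends only on $K_d$ (constant along $\kc_d$) and which equals $\widetilde H^{1,1}(\ka_X,\ZZ)$ for very general $X\in\kc_d$. Hence if $X\in\kc_d$ and $A_d$ represents zero, then so does $\widetilde H^{1,1}(\ka_X,\ZZ)$. Conversely, suppose $\widetilde H^{1,1}(\ka_X,\ZZ)$ represents zero and pick an isotropic $v$ in it; since the orthogonal complement of $\langle\lambda_1,\lambda_2\rangle$ inside $\widetilde H^{1,1}(\ka_X,\ZZ)$ is negative definite, $v$ is not orthogonal to $\langle\lambda_1,\lambda_2\rangle$, so the saturation of $\langle\lambda_1,\lambda_2,v\rangle$ is a non-degenerate primitive rank-three sublattice of signature $(2,1)$, necessarily of the shape $A_{d'}$ for the discriminant $d'$ of the associated primitive plane $K_{d'}\subset H^{2,2}(X,\ZZ)$; then $X\in\kc_{d'}$ and $A_{d'}$ represents zero. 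Thus the theorem reduces to the assertion that $A_d$ represents zero if and only if $d$ satisfies $(\ast\ast')$.

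The arithmetic is then brief. The data above force $A_d\otimes\QQ\cong\langle 2,6,-3d\rangle\cong 2\langle 1,3,-3d/2\rangle$ (from $\langle\lambda_1,\lambda_2\rangle\otimes\QQ\cong\langle 2,6\rangle$, signature $(2,1)$, and $|\det A_d|=d$). A lattice represents zero over $\ZZ$ as soon as it does over $\QQ$, and, by Hasse--Minkowski, a rational quadratic form is isotropic iff it is isotropic over every completion. Now $\langle 1,3,-3d/2\rangle$ is isotropic over $\QQ$ iff the anisotropic binary form $\langle 1,3\rangle$ represents $3d/2$, i.e.\ iff $3d/2$ is a norm from $\QQ(\sqrt{-3})=\QQ(\zeta_3)$; since $3$ itself is such a norm, this is equivalent to $d/2$ being a norm. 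By the Hasse norm theorem, $d/2$ is a norm iff it is a local norm at every place: the archimedean place and the split primes $p\equiv1\,(3)$ impose nothing, an inert prime $p$ --- that is, $p=2$ or an odd $p\equiv2\,(3)$ --- imposes that $v_p(d/2)$ be even, and the condition at the ramified prime $p=3$ is automatic once the others hold, by Hilbert reciprocity together with $d/2>0$. Hence $A_d$ represents zero exactly when $v_p(d/2)$ is even for every prime $p\equiv2\,(3)$, which together with $d$ even (forced by $(\ast)$) is precisely condition $(\ast\ast')$.

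The main obstacle, I expect, is pinning down $A_d$ precisely enough to obtain $A_d\otimes\QQ\cong\langle 2,6,-3d\rangle$: one must know that the distinguished $A_2$-sublattice $\langle\lambda_1,\lambda_2\rangle$ lies in $A_d$ and that $|\det A_d|=d$, which is exactly where the fine structure of $\widetilde H(\ka_X,\ZZ)$ and its comparison with $H^4(X,\ZZ)$ (following Addington--Thomas) must be brought in, together with Hassett's irreducibility of $\kc_d$, used to identify the plane $K_{d'}$ from the converse argument with the expected lattice. Once that is in hand the number theory collapses to the classical representability question for $x^2+3y^2$, i.e.\ to the splitting behaviour in $\QQ(\zeta_3)$, which is routine. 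A secondary technical point is the twisted derived-Torelli reformulation in the first step, in particular the automatic rationality of the $B$-field.
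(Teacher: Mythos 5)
Your route coincides with the paper's (Proposition \ref{prop:DtwDd}, proved via Proposition \ref{prop:DtwK3De} and Lemma \ref{lem:UUn}): reduce to the existence of a non-zero isotropic class in $\widetilde H^{1,1}(\ka_X,\ZZ)$, localize that to the rank-three saturation of $A_2\oplus\ZZ\cdot v$ (your $A_d$ is the paper's $\Gamma_d$, and ${\rm disc}(\Gamma_d)=d$ is Lemma \ref{lem:dics11}), and decide isotropy of a ternary form locally. Your Hasse-norm-theorem computation is a correct and clean repackaging of the paper's appeal to Kneser's criterion for $A_2$ representing $2n$ (equation (\ref{eqn:class}) and the footnote on $x^2+3y^2$). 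One small repair: signature and determinant do not determine a form over $\QQ$, so ``the data force $A_d\otimes\QQ\cong\langle2,6,-3d\rangle$'' should be replaced by the direct computation $A_d\otimes\QQ=(A_2\oplus\ZZ\cdot v)\otimes\QQ\cong\langle2,6\rangle\perp\langle(v)^2\rangle$ with $(v)^2\in\{-d/3,\,-3d\}$ by (\ref{eqn:v^2}), both congruent to $-3d$ modulo squares.

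The one genuine gap is the parenthetical claim that ``the rationality of the $B$-field is automatic.'' It is not: if the integral $(1,1)$-part of a period on $\widetilde\Lambda$ contains a primitive isotropic class $e$ but no integral class pairing non-trivially with $e$, then after splitting $\widetilde\Lambda=\Lambda\oplus U$ with $e\in U$ the period reads $\sigma+\lambda e$ with $\lambda\in\CC$ a priori arbitrary, and $\lambda=(B.\sigma)$ for a \emph{rational} $B$ is a genuine constraint --- these are exactly the type ii) periods with irrational $B$ of Section \ref{sec:GenK3}, which are not of twisted K3 type. What saves you is an observation you already make but deploy only in the descent step: $e$ is orthogonal to the positive $2$-plane spanned by ${\rm Re}\,\sigma_X,{\rm Im}\,\sigma_X$, so if it also lay in $A_2^\perp$ (signature $(2,20)$) it would lie in a negative definite subspace, contradicting $(e)^2=0$; hence some $a\in A_2$ has $(a.e)\ne0$, and $f\coloneqq(a.e)a-((a)^2/2)e$ completes $e$ to a $U(n)$ of type $(1,1)$. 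It is this second \emph{algebraic} isotropic vector that forces $\lambda=(B.\sigma)$ with $B=-(1/n)\gamma$ rational --- precisely Proposition \ref{prop:DtwK3De} combined with the proof of Lemma \ref{lem:UUn} --- and the same positive-definite $A_2$ (not the bound $\rk\,\widetilde H^{1,1}\ge3$) is what makes the K3 surface produced by the period map projective. With this supplied, your argument is complete and agrees with the paper's.
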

Obviously, if $d$ satisfies ($\ast$$\ast$), then $k^2d$ satisfies ($\ast$$\ast'$) for all integers $k$.
Conversely, any $d$ satisfying  ($\ast$$\ast'$) can be written (in general non-uniquely)
as $k^2d_0$ with $d_0$ satisfying ($\ast$$\ast$).

Also note that for $X\in{\mathcal C}_d$ with $d$ satisfying
($\ast$$\ast'$) the transcendental part $T(X)\subset H^{2,2}(X,\ZZ)$ is Hodge isometric (up to sign)
to $T(S,\alpha)$ of a twisted K3 surface $(S,\alpha)$ (cf.\ \cite{HuSt}):
\begin{equation}
T(X)(-1)\cong T(S,\alpha)\cong {\rm Ker}(\alpha\colon T(S)\to \QQ/\ZZ).
\end{equation}

As the main result of \cite{AT}, Addington and Thomas proved that at least generically
($\ast$$\ast$) is equivalent to $\ka_X\cong\Db(S)$ for some K3 surface $S$. The following
twisted version of it will be proved in Section \ref{sec:thm:genastastast}.


\begin{thm}\label{thm:genastastast}
{\rm i)} If $\ka_X\cong\Db(S,\alpha)$ for some twisted K3 surface $(S,\alpha)$, then $X\in\kc_d$ with $d$ satisfying \emph{($\ast$$\ast'$)}.

{\rm ii)} Conversely, if $d$ satisfies  \emph{($\ast$$\ast'$)}, then there exists a Zariski open dense set $\emptyset\ne U\subset\kc_d$ such that
for all $X\in\kc_d$ there exists a twisted K3 surface $(S,\alpha)$  and an equivalence $\ka_X\cong\Db(S,\alpha)$.
\end{thm}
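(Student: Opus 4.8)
The plan is to mirror the Addington--Thomas strategy from \cite{AT}, but with $\Db(S)$ replaced by $\Db(S,\alpha)$ and the condition ($\ast\ast$) replaced by ($\ast\ast'$), using the twisted Hodge-theoretic criterion of Theorem \ref{thm:twHas} as the combinatorial backbone. For part (i), suppose $\ka_X\cong\Db(S,\alpha)$ for some twisted K3 surface $(S,\alpha)$. Then the Mukai--Hodge structure $\widetilde H(S,\alpha,\ZZ)$ is Hodge isometric to $\widetilde H(\ka_X,\ZZ)$ (this is the twisted analogue of the fact that a twisted FM-equivalence induces a Hodge isometry of full Mukai lattices, which is available in \cite{HuSt,HuSt2}; combined with Proposition \ref{prop:equivHodgeA} this identifies the two Hodge structures). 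By Theorem \ref{thm:twHas} this forces $X\in\kc_d$ with $d$ satisfying ($\ast\ast'$). So part (i) is essentially a formal consequence of the Hodge-theoretic dictionary already in place.

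For part (ii) I would argue by deformation along the divisor $\kc_d$. Fix $d$ satisfying ($\ast\ast'$). First, locate a single well-understood point: since ($\ast\ast'$) allows $d=k^2d_0$ with $d_0$ satisfying ($\ast\ast$), and since Addington--Thomas (or rather its twisted refinement, together with the moduli theory of twisted sheaves on K3 surfaces and Kuznetsov's work identifying $\ka_X$ with $\Db(S,\alpha)$ for cubics containing a plane or for certain Pfaffian-type cubics) produces cubics $X_0$ with $\ka_{X_0}\cong\Db(S_0,\alpha_0)$, one gets at least one $X_0\in\kc_d$ for which the equivalence exists. Concretely, I expect to use the cubics containing a plane: for such $X$ Kuznetsov showed $\ka_X\cong\Db(S,\alpha)$ with $(S,\alpha)$ the associated twisted K3 (the degree-2 K3 double cover of the plane of projection with its Brauer class), and the discriminant $d$ realized this way runs over an explicit set; one checks this set meets every $\kc_d$ with $d$ satisfying ($\ast\ast'$), or at least that by further deformation within $\kc_d$ one reaches such a point. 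Then I would spread out: the existence of a twisted FM-equivalence $\ka_X\cong\Db(S,\alpha)$ is an open condition in families (one deforms the twisted K3 $(S,\alpha)$ together with the kernel object in $\ka_X\times\Db(S,\alpha)$, using that $\ka_X$ and the twisted K3 category both admit good deformation theory — the relevant $\Ext$-vanishing and the unobstructedness of the Fourier--Mukai kernel, exactly as in \cite[\S 5--7]{AT}), so the locus $U\subset\kc_d$ where the equivalence exists is Zariski open; combined with irreducibility of $\kc_d$ (Hassett) and non-emptiness of $U$, this gives the result. (I note the statement as written says ``for all $X\in\kc_d$'' after introducing $U$; presumably this should read ``for all $X\in U$'', and I would prove exactly that.)

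The main obstacle is part (ii), and within it the deformation-openness step rather than the production of the initial point. One must show that a twisted FM-equivalence deforms: given $\ka_X\cong\Db(S,\alpha)$ over a point of $\kc_d$, one needs the twisted K3 $(S,\alpha)$ to deform in step with $X$ (matching the period, via Theorem \ref{thm:twHas} applied in families to keep the Hodge isometry $\widetilde H(\ka_X,\ZZ)\cong\widetilde H(S,\alpha,\ZZ)$ through the deformation), and one needs the Fourier--Mukai kernel $P\in\Db(\ka_X\times (S,\alpha))$ to deform, which requires controlling $\Ext^2$ of $P$ relative to the base — here the Calabi--Yau-like properties of $\ka_X$ established by Kuznetsov (Serre functor $= [2]$ on $\ka_X$) and the corresponding $2$-Calabi--Yau property of $\Db(S,\alpha)$ are what make the obstruction classes vanish or pair into the right spaces, as in \cite{AT}. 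The twisting introduces only mild extra bookkeeping (Brauer classes, gerbes, the sign ambiguity in the Hodge isometry), and the genuinely new input is purely lattice-theoretic, namely that ($\ast\ast'$) is precisely the condition under which $\widetilde H(\ka_X,\ZZ)$ contains a copy of the twisted K3 lattice $\widetilde\Lambda$ with a compatible Hodge structure of twisted K3 type — which is already delivered by Theorem \ref{thm:twHas}. So modulo that theorem, the proof is a twisted transcription of \cite{AT}, with the kernel-deformation argument being the heart.
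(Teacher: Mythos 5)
Part (i) of your proposal is correct and is the paper's argument: the equivalence induces a Hodge isometry $\widetilde H(\ka_X,\ZZ)\cong\widetilde H(S,\alpha,\ZZ)$ (the relevant statement is Proposition \ref{prop:indHodge}; Proposition \ref{prop:equivHodgeA}, which you cite, concerns equivalences between two cubics), and Theorem \ref{thm:twHas} then forces $X\in\kc_d$ with $d$ satisfying ($\ast$$\ast'$). Your overall plan for part (ii) --- produce one point of $\kc_d$ at which the equivalence exists and spread out by deforming the Fourier--Mukai kernel --- is also the paper's plan, and your identification of the spreading-out step with the twisted kernel-deformation machinery (Theorem \ref{thm:Defo2}) is right, including the observation that ``for all $X\in\kc_d$'' should read ``for all $X\in U$''.

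The genuine gap is in the production of the initial point. You locate it in Kuznetsov's plane construction and then write ``one checks this set meets every $\kc_d$ \ldots\ or at least that by further deformation within $\kc_d$ one reaches such a point''; the second alternative is circular, since the deformation step is exactly what requires a base point, and the first is unsubstantiated: the twisted K3 surfaces attached to cubics containing a plane are degree-two surfaces with Brauer classes of order dividing two, and there is no reason the resulting equivalences account for every $d$ satisfying ($\ast$$\ast'$); moreover, even at a point of $\kc_8\cap\kc_d$ the twisted K3 surface Kuznetsov hands you need not be the one whose Hodge structure is prescribed by the family chosen over $\kc_d$. The paper's resolution is the step your proposal omits: by \cite[Thm.\ 4.1]{AT} (Kuznetsov combined with the Laza--Looijenga description of the period map) there is a cubic $X\in\kc_8\cap\kc_d$ admitting an \emph{untwisted} equivalence $\Phi_0\colon\ka_X\cong\Db(S_0)$; composing $\Phi_0^H$ with the globally chosen Hodge isometry $\varphi\colon\widetilde H(S,\alpha,\ZZ)\cong\widetilde H(\ka_X,\ZZ)$ gives a Hodge isometry $\widetilde H(S,\alpha,\ZZ)\cong\widetilde H(S_0,\ZZ)$ between twisted and untwisted K3 surfaces, which, after correcting the orientation via Lemma \ref{lem:revor}, is induced by an actual equivalence $\Db(S,\alpha)\cong\Db(S_0)$ by the twisted derived global Torelli theorem of \cite{HuSt2}. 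The composite $\Db(S,\alpha)\cong\Db(S_0)\cong\ka_X$ then induces exactly $\varphi$ and can be fed into Theorem \ref{thm:Defo2}. Without this appeal to \cite{HuSt2} --- and the orientation adjustment, which you dismiss as bookkeeping but which is precisely what Lemma \ref{lem:revor} is for --- your argument does not get off the ground for a general $d$ satisfying ($\ast$$\ast'$).
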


Non-special cubics are determined by their associated K3 category $\ka_X$ and for general special cubics
$\ka_X$ is determined by its Hodge structure (see Corollary \ref{cor:verygeneralnoFM} and
 Section \ref{sec:proofthm:HodgeA}):

\begin{thm}\label{thm:HodgeA}
 Let $X$ and $X'$ be two smooth cubics.
 
{\rm i)} Assume $X$ is not special, i.e.\ not contained in any $\kc_d\subset\kc$.
Then there exists a FM-equivalence $\ka_X\cong\ka_{X'}$ if and only
if $X\cong X'$. 

{\rm ii)} For $d$ satisfying \emph{($\ast$$\ast'$)} and a  Zariski dense open set of 
cubics $X\in \kc_d$,  there exists a FM-equivalence $\ka_X\cong\ka_{X'}$ if and only if there exists a Hodge isometry 
$\widetilde H(\ka_X,\ZZ)\cong\widetilde H(\ka_{X'},\ZZ)$.

{\rm iii)} For arbitrary $d$ and very general $X\in\kc_d$   there exists
a FM-equivalence $\ka_X\cong\ka_{X'}$ if and only if there exists a Hodge isometry 
$\widetilde H(\ka_X,\ZZ)\cong\widetilde H(\ka_{X'},\ZZ)$. 
\end{thm}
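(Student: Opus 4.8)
The plan is to prove in each part only the non-trivial implication. In part~(i) the implication $X\cong X'\Rightarrow\ka_X\cong\ka_{X'}$ is obvious, and in all three parts ``$\ka_X\cong\ka_{X'}\Rightarrow$ Hodge isometry'' is Proposition~\ref{prop:equivHodgeA}. So what remains is: in~(i), to deduce $X\cong X'$ from an FM-equivalence $\ka_X\cong\ka_{X'}$; and in~(ii), (iii), to realise a given Hodge isometry $\phi\colon\widetilde H(\ka_X,\ZZ)\cong\widetilde H(\ka_{X'},\ZZ)$ by an FM-equivalence.

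For part~(i) I would argue as follows. Since $X$ is not special, $H^{2,2}(X,\ZZ)=\ZZ\cdot{\rm c}_1(\ko(1))^2$ has rank one, so the algebraic part of $\widetilde H(\ka_X,\ZZ)$ has rank two and its transcendental part is, as a weight-two Hodge structure, the primitive cohomology $H^4(X,\ZZ)_{\rm prim}$ (up to the Tate twist built into $\widetilde H(\ka_X,\ZZ)$). Given the Hodge isometry $\phi$ coming from the assumed FM-equivalence, the same rank count shows $X'$ is not special either, and $\phi$ restricts to a Hodge isometry $H^4(X,\ZZ)_{\rm prim}\cong H^4(X',\ZZ)_{\rm prim}$. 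As $H^4(\,\cdot\,,\ZZ)_{\rm prim}$ is the orthogonal complement of $\langle{\rm c}_1(\ko(1))^2\rangle$ inside the fixed lattice $H^4(\,\cdot\,,\ZZ)$, a routine gluing argument over the common discriminant group $\ZZ/3\ZZ$ extends this to a Hodge isometry $H^4(X,\ZZ)\cong H^4(X',\ZZ)$ taking ${\rm c}_1(\ko(1))^2$ to $\pm{\rm c}_1(\ko(1))^2$, and the global Torelli theorem for cubic fourfolds then gives $X\cong X'$ (this also re-proves Corollary~\ref{cor:verygeneralnoFM}). The one delicate point here is the sign bookkeeping in the gluing step, i.e.\ checking that the possible ambiguity is absorbed by the monodromy group.

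For parts~(ii) and~(iii) with $d$ satisfying \emph{($\ast$$\ast'$)}, I would realise $\phi$ via twisted K3 surfaces. Note first that $\phi$ transports the lattice $K_d$, so $X'\in\kc_d$ as well. Using Theorem~\ref{thm:genastastast}(ii) I would pass to a dense open $U\subset\kc_d$ on which $\ka_{(\cdot)}\cong\Db(\text{twisted K3})$, and then --- invoking the finiteness of Corollary~\ref{cor:finiteFM} together with a dimension count (over the complement of $U$ the cubics with $\widetilde H(\ka_{(\cdot)},\ZZ)$ Hodge isometric to a fixed one form, by that finiteness, a proper closed subset of $\kc_d$) --- shrink it to a dense open $U'$ such that for $X\in U'$ every cubic $X'$ with $\widetilde H(\ka_{X'},\ZZ)\cong\widetilde H(\ka_X,\ZZ)$ again lies in $U$. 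Then $\ka_X\cong\Db(S,\alpha)$ and $\ka_{X'}\cong\Db(S',\alpha')$, and composing the induced Hodge isometries with $\phi$ gives $\widetilde H(S,\alpha,\ZZ)\cong\widetilde H(S',\alpha',\ZZ)$; after correcting it, if necessary, by a reflection so that it becomes orientation preserving, the derived Torelli theorem for twisted K3 surfaces yields $\Db(S,\alpha)\cong\Db(S',\alpha')$, hence $\ka_X\cong\ka_{X'}$.

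The general case of~(iii), where $d$ need not satisfy \emph{($\ast$$\ast'$)} and there may be no associated twisted K3 surface at all, I would treat by the deformation method of Addington--Thomas applied to the \emph{pair} $(X,X')$. Since $X$ is very general in $\kc_d$, so is $X'$, and $\phi$ together with the period map lets one connect $(X,X')$, through a family of pairs of cubics in $\kc_d$ carrying a Hodge isometry $\phi_t$ of the associated $\widetilde H$'s, to a pair $(X_1,X_1')$ lying in $\kc_d\cap\kc_{d'}$ for a suitable $d'$ satisfying \emph{($\ast$$\ast'$)}; both $X_1$ and $X_1'$ are then very general in $\kc_{d'}$, so the previous paragraph produces an FM-equivalence $\ka_{X_1}\cong\ka_{X_1'}$ inducing $\phi_1$, with some Fourier--Mukai kernel $\ke_1$. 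The final and hardest step is to deform $\ke_1$ back along the family: the obstruction lies in a Hochschild-cohomology group $\HoH^2$, and it vanishes precisely because $\phi_t$ identifies the Kodaira--Spencer classes of the two families of cubics --- this is where one crucially uses that $\phi$ is a Hodge isometry rather than merely an isometry of lattices. I expect this deformation-of-the-kernel step, together with the verification in~(ii) that the twisted Mukai isometry can be made orientation preserving so that the twisted derived Torelli theorem applies, to be the main obstacles; everything else is lattice theory, Hodge theory, and the already-established results~\ref{thm:finiteFM}, \ref{thm:twHas}, \ref{thm:genastastast}.
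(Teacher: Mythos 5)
Your proposal follows essentially the same route as the paper: part (i) is the paper's Corollary \ref{cor:verygeneralnoFM} (read the isometry of transcendental lattices as one of $H^4(\cdot,\ZZ)_{\rm prim}$ and apply Voisin's Global Torelli theorem); part (ii) is Theorem \ref{thm:genastastast} plus the twisted derived Torelli theorem of Huybrechts--Stellari, corrected for orientation by Lemma \ref{lem:revor}; and part (iii) is the paper's argument of spreading the pair $(X,X',\varphi)$ out over a component of the Hodge-isometry correspondence, realizing $\varphi$ by an honest FM-equivalence at a special point where both categories become (twisted) K3 categories, and deforming the kernel back.

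Two remarks on the details. First, in part (iii) your assertion that $X_1$ and $X_1'$ are ``very general in $\kc_{d'}$'' is false as stated: they lie on the divisor $\kc_d\cap\kc_{d'}$, hence are special in $\kc_{d'}$, so you cannot invoke a statement about very general members of $\kc_{d'}$. What is actually needed (and what the paper does, cf.\ Section \ref{sec:ProofAut}) is to choose $d'$ satisfying ($\ast$$\ast$) so that the Zariski-open locus of $\kc_{d'}$ on which $\ka\cong\Db(S)$ holds meets the image of the given component of the correspondence; this is possible for all but finitely many such $d'$. Second, the ``hardest step'' you defer --- deforming the kernel $\ke_1$ back along the family, with the obstruction in $\HoH^2$ killed by the matching of Kodaira--Spencer classes under $\varphi_t$ --- is precisely Theorem \ref{thm:Defo1}, which the paper has already established; you should cite it rather than re-derive it. Your extra care in part (ii) about whether the partner $X'$ also lands in the good open subset of $\kc_d$ is a legitimate concern that the paper glosses over, but note that your proposed fix via Corollary \ref{cor:finiteFM} removes one proper closed subset per component of the (countable) correspondence, so without a further argument it only yields the conclusion for very general, not Zariski-open dense, $X$.
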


We will also see that  arguments of Addington \cite{Add} can be adapted to show that ($\ast$$\ast'$) is in fact equivalent
to the Fano variety of lines on $X$ being birational to a moduli space of twisted sheaves on some K3 surface, see Proposition \ref{prop:analAdd}.

\subsection{} There are a few fundamental issues concerning $\ka_X$ that we do not know how to address and that prevent us from developing the theory in full. Firstly, this paper only deals with FM-equivalences $\ka_X\cong\ka_{X'}$, i.e.\ those for which the composition
$\Db(X)\to\ka_X\congpf\ka_{X'}\,\hookrightarrow\Db(X')$ is a Fourier--Mukai transform. One would expect this to be the case for
all equivalences, but the classical result of Orlov \cite{OrlovK3} and its generalization by Canonaco and Stellari \cite{CanSte} do not apply to this situation.
Secondly, it is not known whether $\ka_X$ always admits bounded t-structures or stability conditions. This is problematic when one
wants to study FM-partners of $\ka_X$ as moduli spaces of (stable) objects in $\ka_X$. As in \cite{AT}, the lack of stability is also the crucial stumbling block to use deformation theory to prove statements as in Theorem \ref{thm:genastastast} for all cubics and not only for generic or very general ones. 

\subsection{} The plan of the paper is as follows. Section \ref{sec:Lattice} deals with all issues related to the lattice theory and
the abstract Hodge theory. In particular, natural (countable unions of) codimension one subsets
$D_{\rm K3}\subset D_{\rm K3'}$ of the period domain $D\subset \PP(A_2^\perp\otimes\CC)$ are studied at great length. They parametrize
periods that induce Hodge structures that are Hodge isometric  to $\widetilde H(S,\ZZ)$ and $\widetilde H(S,\alpha,\ZZ)$,
respectively, and which are
described in terms of the numerical conditions ($\ast$$\ast$) and ($\ast$$\ast'$). In particular, Theorem \ref{thm:twHas} is proved.
We also provide a geometric description
of all periods $x\in D$ in terms of generalized K3 surfaces, see Proposition \ref{prop:genK3allD}.

In Section \ref{sec:AX} we extend results in \cite{AT} from equivalences $\ka_X\cong\Db(S)$ to the twisted case and prove the finiteness of FM-partners for $\ka_X$, see Theorem \ref{thm:finiteFM}.
Moreover, we produce an action of the universal cover of ${\rm SO}(A_2)$ on $\ka_X$ for all cubics
(Remark \ref{rem:univcoveract}) and formulate an analogue of Bridgeland's conjecture (cf.\ Conjecture \ref{conj:Brid}).

The short Section \ref{sec:Fano} shows that ($\ast$$\ast'$) is equivalent to $F(X)$ being birational to a moduli space of
stable twisted sheaves on a K3 surface. In Section \ref{sec:defo} we adapt the deformation theory of \cite{AT} to the twisted case.
Finally, in Section \ref{sec:Proofs} we conclude the proofs of Theorems \ref{thm:noFMvg}, \ref{thm:genastastast}, and \ref{thm:HodgeA}.

\smallskip

\subsection{Acknowledgements} I would like to thank Nick Addington and Sasha Kuznetsov for very helpful discussions
during the preparation of the paper.  I am also grateful to Ben Bakker, Daniel Halpern-Leistner,
J\o rgen Rennemo, Paolo Stellari, Andrey Soldatenkov,
and Richard Thomas for comments and suggestions. I enjoyed several discussions with Alex Perry, in particular
on the possibility of proving a result like Theorem \ref{thm:finiteFM}, for which he has also found a proof.
Thanks to Emanuel Reinecke and Pawel Sosna  for a long list of
comments on the first version and to the referee for a very careful reading and innumerable suggestions.

\section{Lattice theory and period domains}\label{sec:Lattice}

We start by discussing the relevant lattice theory. To make the reading self-contained,
we will also recall results due to Hassett and to Addington and Thomas on the way.

There are two kinds of lattices, those related to K3
surfaces, $\Lambda$, $\widetilde\Lambda$, etc., and those attached to cubic fourfolds,
${\rm I}_{21,2}$, $K_d$, etc.. 
The two types are linked by a lattice $A_2^\perp$ of signature $(2,20)$ and two embeddings
$$\xymatrix{{\rm I}_{2,21}&A_2^\perp\ar@{^(->}[r] \ar@{_(->}[l]&\widetilde\Lambda.}$$
The induced maps between the associated period domains allows one to relate periods of cubic fourfolds to
periods of (generalized) K3 surfaces.

\subsection{}\label{sec:lattpreps} By $U$ we shall denote the hyperbolic plane, i.e.\  $\ZZ^2$ with the intersection
form  \scalebox{0.7}{$\left(\begin{array}{cc} 0& 1 \\1 & 0\end{array}\right)$}. The K3 lattice $\Lambda$ and 
the extended K3 lattice $\widetilde\Lambda$ are by definition the unique even, unimodular lattice of signature $(3,19)$ and $(4,20)$, respectively. So, 
$$\Lambda\cong E_8(-1)^{\oplus 2}\oplus U^{\oplus 3}\text{ and  }
\widetilde\Lambda\cong \Lambda\oplus U.$$ 

Next, $A_2$  denotes the standard root lattice of rank two, i.e.\ there exists
a basis $\lambda_1,\lambda_2$ with respect to which the intersection matrix is given by
\scalebox{0.7}{$\left(\begin{array}{cc}2 & -1 \\-1 & 2\end{array}\right)$}.  The lattice $A_2$ is even
and of signature $(2,0)$. Moreover, its  discriminant group
is $A_{A_2}\coloneqq A_2^*/A_2\cong\ZZ/3\ZZ$ and, in particular, $A_2$ is  not unimodular. 

Due to \cite[Thm.\ 1.14.4]{NikulinInt}, there exist  embeddings
$$A_2\,\hookrightarrow\Lambda\text{ and }A_2\,\hookrightarrow\widetilde\Lambda,$$
which are both unique up to the action of $\OO(\Lambda)$
and $\OO(\widetilde\Lambda)$, respectively. Note that all such embeddings are automatically primitive.
In the following we will fix once and for all one such embedding $A_2\,\hookrightarrow\Lambda\,\hookrightarrow\widetilde\Lambda$
and consider the orthogonal complement of
$A_2\subset\widetilde\Lambda$ as a fixed primitive sublattice  
\begin{equation}\label{eqn:A2emb}
A_2^\perp\subset\widetilde\Lambda
\end{equation}
of signature $(2,20)$. Its isomorphism type does not depend on the chosen embedding of
$A_2$. It can be described explicitly as the orthogonal complement of
the  embedding $A_2\,\hookrightarrow \widetilde \Lambda$  given by
\begin{equation}\label{eqn:explemb}
A_2\,\hookrightarrow U\oplus U\,\hookrightarrow \widetilde\Lambda,~\lambda_1\mapsto e'+f',~ \lambda_2\mapsto e+f-e',
\end{equation}
where $e,f$ and $e',f'$ denote the standard bases of the two copies of the hyperbolic plane.
Thus,
\begin{equation}\label{eqn:A2perp}A_2^\perp\cong E_8(-1)^{\oplus 2}\oplus U^{\oplus 2}\oplus A_2(-1).\footnote{In \cite{HassettComp} 
the last summand is instead described as a lattice with intersection matrix \scalebox{0.7}{$\left(\begin{array}{cc}-2 & -1 \\-1 & -2\end{array}\right)$}, which is of course isomorphic
to $A_2(-1)$.}
\end{equation}

\begin{remark}\label{rem:OA_2}
For later use we recall that the group of isometries $\OO(A_2)$ of the lattice $A_2$ is isomorphic
to ${\mathfrak S}_3\times \ZZ/2\ZZ$. Here, the Weyl group ${\mathfrak S}_3$ permutes the unit vectors
 $e_i\in {\mathbb R}^3$, where $A_2\,\hookrightarrow {\mathbb R}^3$ via $\lambda_1=e_1-e_2$ and $\lambda_2=e_2-e_3$, and
 the generator of $\ZZ/2\ZZ$ acts by $-{\rm id}$. In fact, ${\mathfrak S}_3\subset\OO(A_2)$ is the kernel of the natural map
$\OO(A_2)\to \OO(A_{A_2})\cong\ZZ/2\ZZ$ (use the aforementioned $A_{A_2}\cong\ZZ/3\ZZ$).
The sign ${\mathfrak S}_3\to\ZZ/2\ZZ$ can be identified with the determinant $\OO(A_2)\to\{\pm1\}$. Thus,
the group of orientation preserving isometries of $A_2$ acting trivially on $A_{A_2}$ is just
${\mathfrak A}_3\cong\ZZ/3\ZZ$, where the generator can be chosen to act
by $\lambda_1\mapsto-\lambda_1-\lambda_2$, $\lambda_2\mapsto\lambda_1$.
\end{remark}
\subsection{}\label{sec:cubiclat}
Next, consider the unique odd, unimodular lattice $${\rm I}_{2,21}\coloneqq\ZZ^{\oplus 2}\oplus\ZZ(-1)^{\oplus 21}\cong E_8(-1)^{\oplus 2}\oplus U^{\oplus 2}\oplus\ZZ(-1)^{\oplus 3}$$ of signature $(2,21)$ and an element $h\in{\rm I}_{2,21}$
with $(h)^2=-3$, e.g.\ $h=(1,1,1)\in \ZZ(-1)^{\oplus 3}$. Then the primitive sublattice $h^\perp\subset{\rm I}_{2,21}$ is of signature $(2,20)$
and using (\ref{eqn:A2perp}) one finds $$h^\perp\cong A_2^\perp.$$

In the following, we will always consider $A_2^\perp$ with two fixed embeddings as above:
$$\xymatrix{{\rm I}_{2,21}&A_2^\perp\ar@{^(->}[r] \ar@{_(->}[l]&\widetilde\Lambda.}$$

Following Hassett \cite{HassettComp}, we now consider all primitive, negative definite sublattices
$$h\in K_d\subset{\rm I}_{2,21}$$ of rank two
containing $h$. Here, the index $d={\rm disc}(K_d)$ denotes the discriminant of $K_d$,
which is necessarily positive. Using \cite[Sec.\ 1.5]{NikulinInt}
one finds that up to the action of the subgroup of $\OO({\rm I}_{2,21})$ fixing $h$ the lattice
$K_d\subset{\rm I}_{2,21}$ is uniquely determined by $d$, see \cite[Prop.\ 3.2.4]{HassettComp} for the details. 

Furthermore, $d\equiv0,2\,(6)$ and the generator  $v$ of $K_d\cap A_2^\perp$ (unique up to sign)
satisfies:
\begin{equation}\label{eqn:v^2}
-(v)^2=\begin{cases}d/3&\text{ if }d\equiv0\,(6)\\
3d&\text{ if }d\equiv2\,(6).
\end{cases}
\end{equation}

More precisely, Hassett shows that up to isometry of $A_2^\perp$ the vector $v$ is given as
\begin{equation}\label{eqn:eplicv}
v=e_1-(d/6)f_1\text{ and }v=3(e_1-((d-2)/6) f_1)+\mu_1-\mu_2,
\end{equation}
respectively. Here, $e_1,f_1$ is the standard basis of one of the copies of $U$ in (\ref{eqn:A2perp}) and $\mu_1,\mu_2$ denotes the standard basis of $A_2(-1)$.
Viewing $v\in A_2^\perp\subset\widetilde\Lambda$ as an element of $\widetilde\Lambda$ leads to a lattice
$$A_2\oplus\ZZ \cdot v\subset\widetilde\Lambda$$ of rank three and
signature $(2,1)$. As it turns out, this is a primitive sublattice for $d\equiv0\, (6)$ and it is  of index three in its saturation for $d\equiv2\,(6)$.
This follows from \cite[Prop.\ 3.2.2]{HassettComp} asserting that in the two cases $(v.A_2^\perp)=\ZZ$ and $3\ZZ$, respectively.
Altogether this yields

\begin{lem}\label{lem:dics11}
The saturation $\Gamma_d\subset\widetilde\Lambda$ of $A_2\oplus\ZZ\cdot v$ satisfies
$${\rm disc}(\Gamma_d)=d.$$
\end{lem}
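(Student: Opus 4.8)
The plan is to compute the discriminant of $\Gamma_d$ directly from the discriminant of the sublattice $A_2 \oplus \ZZ\cdot v \subset \widetilde\Lambda$ together with the index of this sublattice in its saturation $\Gamma_d$, using the standard relation
\[
{\rm disc}(A_2 \oplus \ZZ\cdot v) = [\Gamma_d : A_2\oplus\ZZ\cdot v]^2 \cdot {\rm disc}(\Gamma_d).
\]
Since $A_2$ and $\ZZ\cdot v$ are orthogonal inside $\widetilde\Lambda$ (recall $v \in A_2^\perp$), the left-hand side factors as ${\rm disc}(A_2)\cdot {\rm disc}(\ZZ\cdot v) = 3 \cdot \bigl(-(v)^2\bigr)$, where the value of $-(v)^2$ is recorded in \eqref{eqn:v^2}. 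So the whole computation reduces to knowing the index $[\Gamma_d : A_2 \oplus \ZZ\cdot v]$, which the excerpt has already supplied: it is $1$ when $d \equiv 0\,(6)$ and $3$ when $d \equiv 2\,(6)$, a consequence of the fact (from \cite[Prop.\ 3.2.2]{HassettComp}) that $(v.A_2^\perp) = \ZZ$ and $3\ZZ$ respectively.

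First I would treat the case $d \equiv 0\,(6)$. Here $A_2 \oplus \ZZ\cdot v$ is primitive in $\widetilde\Lambda$, so $\Gamma_d = A_2 \oplus \ZZ\cdot v$ and ${\rm disc}(\Gamma_d) = 3\cdot(d/3) = d$ by \eqref{eqn:v^2}. Next, for $d \equiv 2\,(6)$, the index is $3$, so
\[
{\rm disc}(\Gamma_d) = \frac{{\rm disc}(A_2\oplus\ZZ\cdot v)}{3^2} = \frac{3\cdot 3d}{9} = d,
\]
again using \eqref{eqn:v^2}. In both cases ${\rm disc}(\Gamma_d) = d$, as claimed. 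To make the argument fully self-contained one should briefly justify why primitivity of $A_2 \oplus \ZZ\cdot v$ (resp.\ index $3$ of the saturation) follows from $(v.A_2^\perp) = \ZZ$ (resp.\ $3\ZZ$): since $A_2 \hookrightarrow \widetilde\Lambda$ is primitive and $\widetilde\Lambda$ is unimodular, an overlattice of $A_2 \oplus \ZZ\cdot v$ inside $\widetilde\Lambda$ is detected by glue vectors pairing $A_2^*/A_2 \cong \ZZ/3\ZZ$ against $(\ZZ\cdot v)^*/\ZZ\cdot v$, and the possibility of such a glue vector lying in $\widetilde\Lambda$ is exactly controlled by whether $v$ can be divided by $3$ after adding an element of $A_2 \otimes \QQ$, i.e.\ by the value of $(v.A_2^\perp)$ together with unimodularity of $\widetilde\Lambda$.

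I do not expect any serious obstacle: the statement is essentially a bookkeeping consequence of facts already assembled in the excerpt (equation \eqref{eqn:v^2}, the primitivity/index dichotomy, and orthogonality of $A_2$ and $\ZZ\cdot v$). The only point requiring a little care is the discriminant-group gluing argument identifying the index of $\Gamma_d$ over $A_2 \oplus \ZZ\cdot v$ with the arithmetic of $(v.A_2^\perp)$; but since the excerpt explicitly cites \cite[Prop.\ 3.2.2]{HassettComp} for the values $(v.A_2^\perp) = \ZZ, 3\ZZ$ and already asserts the primitivity/index-three conclusion, one may simply invoke that and plug into the index-square formula.
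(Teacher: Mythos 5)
Your proof is correct, and it is precisely the ``direct computation'' that the paper's proof alludes to in its first clause but does not write out. The computation checks out in both congruence classes: for $d\equiv 0\,(6)$ the sublattice $A_2\oplus\ZZ\cdot v$ is already saturated and $3\cdot(d/3)=d$; for $d\equiv 2\,(6)$ the index is $3$ and $3\cdot 3d/3^2=d$. What the paper actually writes down is a different, index-free argument: since $\Gamma_d\subset\widetilde\Lambda$ and $K_d\subset{\rm I}_{2,21}$ are both primitive sublattices of unimodular lattices, each has the same discriminant as its orthogonal complement, and both complements are identified with the single lattice $v^\perp\subset A_2^\perp$; hence ${\rm disc}(\Gamma_d)={\rm disc}(v^\perp)={\rm disc}(K_d)=d$. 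That route buys a bit more (it identifies the discriminant groups, not merely their orders) and avoids invoking the explicit values of $(v)^2$ from (\ref{eqn:v^2}) and the primitivity/index-three dichotomy, whereas your route is more elementary and entirely mechanical once those two inputs are granted. Your closing remark about justifying the index via glue vectors is welcome but not strictly necessary here: the paper establishes the dichotomy (from $(v.A_2^\perp)=\ZZ$ or $3\ZZ$, citing Hassett) \emph{before} stating the lemma, so it is a legitimate input rather than something the lemma's proof must re-derive. The only point to watch is the sign convention on ${\rm disc}$ (the Gram determinant of $A_2\oplus\ZZ\cdot v$ is negative since the lattice has signature $(2,1)$); your use of $-(v)^2$ implicitly adopts the convention making all the discriminants in question positive, which is consistent with the paper's usage.
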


\begin{proof} This can either be proved by a direct computation or by observing that ${\rm disc}(\Gamma_d)$
equals the discriminant of $\Gamma_d^\perp\subset\widetilde\Lambda$, which is isomorphic
to $v^\perp\subset A_2^\perp$. Similarly, $d={\rm disc}(K_d)$ equals the discriminant of the lattice
$\langle v,h\rangle^\perp$, which again is just $v^\perp\subset A_2^\perp$.
\end{proof}

In our discussion, the lattices $K_d$ will take a back seat, as it will be  more natural to
work with the generator $v\in A_2^\perp\cap K_d$ directly.

\subsection{}\label{sec:defperiod} We shall be interested in the period domains
$$D\subset\PP(A_2^\perp\otimes\CC)\text{ and } Q\subset\PP(\widetilde\Lambda\otimes\CC),$$ defined by the two conditions $(x.x)=0$ and $(x.\bar x)>0$. Observe that
 $$\dim \,D=20\text{ and }\dim\, Q=22$$  and that $Q$ is connected while $D$ has two connected components.
Using the embedding (\ref{eqn:A2emb}), we can 
write $D=\PP(A_2^\perp\otimes\CC)\cap Q$
as part of the  commutative diagram
$$\xymatrix@R-8pt{D\ar@{^(->}[r]\ar@{^(->}[d]&Q\ar@{^(->}[d]\\
\PP(A_2^\perp\otimes\CC)\ar@{^(->}[r]&\PP(\widetilde\Lambda\otimes\CC).}
$$

Thus, points $x\in D$ correspond to Hodge structures of weight two on the lattice $A_2^\perp$, but also to
 Hodge structures on $\widetilde\Lambda$ with $A_2$ contained in the $(1,1)$-part. In fact,
for  very general points $x\in D$  the integral $(1,1)$-part of the corresponding Hodge structure is the lattice $A_2$.

We shall refer to $D$ as the period domain of cubic fourfolds, although only an open subset really corresponds
to smooth cubics. More concretely, for a smooth cubic $X\subset\PP^5$
and any marking, i.e.\ an isometry, $\varphi\colon h^\perp\congpf A_2^\perp$ (up to sign), one defines the associated
period  as the image $x\coloneqq[\varphi_\CC(H^{3,1}(X))]\in D$.
A description of the image of the period map, allowing cubics with ADE-singularities, has been given by
Laza \cite{Laza} and Looijenga in \cite{Loo}. Points in $Q$ are thought of as periods of generalized K3 surfaces, cf.\ Section \ref{sec:GenK3}.

For later use we state the following technical observation.

\begin{lem}\label{lem:revor}
The Hodge structure on $\widetilde\Lambda$ defined by an arbitrary $x\in D$ admits a Hodge isometry that
reverses any given orientation of the four positive directions.
\end{lem}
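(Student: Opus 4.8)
The plan is to exhibit an explicit isometry of $\widetilde\Lambda$ that fixes the Hodge structure attached to $x\in D$ and acts on the four positive directions with the prescribed effect on orientation. The key point is that $x$, being a point of $D=\PP(A_2^\perp\otimes\CC)\cap Q$, has its period lying in the subspace $A_2^\perp\otimes\CC$, so the two positive directions coming from $A_2\subset\widetilde\Lambda$ are unaffected by the choice of $x$: they sit inside the fixed positive-definite plane $A_2\otimes\RR$, which is orthogonal to the positive plane spanned by the real and imaginary parts of a lift of $x$. Thus the four positive directions for the Hodge structure on $\widetilde\Lambda$ split as an orthogonal sum of the fixed plane $A_2\otimes\RR$ and the variable plane $\langle\operatorname{Re}(x),\operatorname{Im}(x)\rangle$ inside $A_2^\perp\otimes\RR$.

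First I would observe that any isometry of $A_2$ extends by the identity on $A_2^\perp$ to an isometry of $A_2\oplus A_2^\perp$, and since $A_2\oplus A_2^\perp$ has finite index in $\widetilde\Lambda$ one must check such an extension is integral on all of $\widetilde\Lambda$; the safe way is to pick an isometry of $A_2$ that acts trivially on the discriminant group $A_{A_2}\cong\ZZ/3\ZZ$, so that it glues with $\id$ on $A_2^\perp$ (which acts trivially on $A_{A_2^\perp}\cong A_{A_2}$) to give a genuine isometry of $\widetilde\Lambda$. By Remark \ref{rem:OA_2}, the subgroup of $\OO(A_2)$ acting trivially on $A_{A_2}$ is $\{\pm\id\}\times{\mathfrak A}_3$; in particular $-\id$ on $A_2$ extended by $\id$ on $A_2^\perp$ is an isometry of $\widetilde\Lambda$. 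This isometry is manifestly a Hodge isometry for the Hodge structure determined by $x$, because $A_2$ lies entirely in the $(1,1)$-part and the map is the identity on $A_2^\perp\supset$ (the line $\CC x$), hence it preserves the Hodge decomposition of $\widetilde\Lambda\otimes\CC$.

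Now I would track the orientation. The chosen isometry is $-\id$ on the positive-definite plane $A_2\otimes\RR$ and $\id$ on the positive-definite plane $\langle\operatorname{Re}(x),\operatorname{Im}(x)\rangle$; on the full positive part, which is the orthogonal direct sum of these two planes, its determinant is $(-1)^2\cdot 1=+1$, so as it stands it \emph{preserves} orientation. To reverse a given orientation I instead want a reflection: I would compose with a reflection that is $-\id$ on a single positive direction. Concretely, take a vector $a\in A_2$ with $a^2>0$ — say $a=\lambda_1$, with $\lambda_1^2=2$ — and let $s_a$ be the corresponding reflection, which negates $\RR a$ and fixes its orthogonal complement; but $s_{\lambda_1}$ acts nontrivially on $A_{A_2}$, so instead I would use $-\id_{A_2}\circ s_{\lambda_1}$ extended by $\id_{A_2^\perp}$: this lies in ${\mathfrak A}_3$ up to sign, hence glues to $\widetilde\Lambda$, and on the positive four-space it is $-\id$ on $\RR\lambda_1^\perp\cap(A_2\otimes\RR)$ and $+\id$ on $\RR\lambda_1\oplus\langle\operatorname{Re}(x),\operatorname{Im}(x)\rangle$, so its determinant on the positive part is $-1$. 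This reverses orientation. Given an \emph{arbitrary} prescribed orientation to reverse, one of the two isometries $\pm\id_{A_2}\oplus\id_{A_2^\perp}$-type constructions (identity-type, determinant $+1$, or reflection-type, determinant $-1$) does the job, since there are only two orientations and both maps are Hodge isometries.

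The main obstacle — and the only genuinely delicate point — is verifying the integrality of these extended maps on $\widetilde\Lambda$ rather than merely on the finite-index sublattice $A_2\oplus A_2^\perp$. This is handled exactly by the discriminant-group criterion of Nikulin: an isometry $g_1\oplus g_2$ of an orthogonal sum of primitive sublattices extends to the overlattice $\widetilde\Lambda$ iff the induced maps on the discriminant groups agree under the gluing isomorphism $A_{A_2}\cong A_{A_2^\perp}$ (up to sign, the identification being $-\id$); since we choose $g_2=\id$ on $A_2^\perp$ and $g_1$ acting trivially on $A_{A_2}$, compatibility is automatic. With that in hand everything else is a short determinant count, so I expect the write-up to be brief.
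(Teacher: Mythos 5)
Your overall strategy---split the positive four-space as the orthogonal sum of $A_2\otimes\RR$ and the plane spanned by $\operatorname{Re}(x),\operatorname{Im}(x)$, act only on the $A_2$-factor, and use the discriminant-group criterion to glue with $\id_{A_2^\perp}$---is exactly the paper's, but the group theory at the decisive step is wrong, and as a result \emph{neither} of the two isometries you construct actually extends to $\widetilde\Lambda$. The kernel of $\OO(A_2)\to\OO(A_{A_2})\cong\ZZ/2\ZZ$ is the Weyl group ${\mathfrak S}_3$, \emph{not} $\{\pm\id\}\times{\mathfrak A}_3$; this is precisely what Remark \ref{rem:OA_2} asserts, together with the fact that on ${\mathfrak S}_3$ the induced sign equals the determinant. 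Concretely, $A_{A_2}\cong\ZZ/3\ZZ$ is generated by $\lambda_1^*=(2\lambda_1+\lambda_2)/3$, and $-\id$ sends $\lambda_1^*$ to $-\lambda_1^*=2\lambda_1^*\ne\lambda_1^*$, so $-\id$ acts \emph{nontrivially} on the discriminant and $-\id_{A_2}\oplus\id_{A_2^\perp}$ does not extend to $\widetilde\Lambda$. The same objection kills your second candidate: $s_{\lambda_1}$ is the transposition $(12)\in{\mathfrak S}_3$ and hence acts trivially on $A_{A_2}$, so $-\id\circ s_{\lambda_1}$ acts there as $-1$; the phrase ``lies in ${\mathfrak A}_3$ up to sign'' is not good enough, because the sign is exactly what obstructs the gluing.

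The repair is simpler than either of your constructions and is what the paper does: take the transposition $g=(12)=s_{\lambda_1}\in{\mathfrak S}_3$ \emph{itself}. It acts trivially on $A_{A_2}$, hence $g\oplus\id_{A_2^\perp}$ extends to an isometry $\tilde g$ of $\widetilde\Lambda$, which is a Hodge isometry because $A_2$ is of type $(1,1)$ and $\tilde g$ fixes $A_2^\perp\supset\CC\cdot x$; its determinant on the positive four-space is $(-1)\cdot(+1)=-1$, so it reverses any given orientation (there is no need to choose between two maps according to the prescribed orientation---a single orientation-reversing isometry reverses both orientations). Note that your detour through reflections was forced on you by the erroneous identification of the kernel: all elements of $\{\pm\id\}\times{\mathfrak A}_3$ have determinant $+1$ on the rank-two lattice $A_2$, so if that really were the kernel, no isometry of the form $g\oplus\id_{A_2^\perp}$ could reverse orientation and this method of proof would fail outright. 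It is the fact that the kernel is ${\mathfrak S}_3$, which contains determinant $-1$ elements, that makes the lemma work.
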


\begin{proof}
Consider a transposition $g\coloneqq(12)\in{\mathfrak S}_3\subset{\rm O}(A_2)$. Then $g$ acts trivially on the dis\-cri\-minant
$A_{A_2}$ (see Remark \ref{rem:OA_2}) and can, therefore, be extended to $\tilde g\in{\rm O}(\widetilde\Lambda)$ acting trivially on $A_2^\perp$.
Thus, the Hodge structure defined by $x$ admits a Hodge isometry $\tilde g$, which preserves the orientation of the two positive directions given by
 the $(2,0)$ and $(0,2)$-parts. On the other hand, by construction, it reverses the  orientation of the
two positive directions in $A_2$.
\end{proof}

\begin{remark}
This result is the analogue of the observation that any Hodge structure on $\widetilde\Lambda$ containing
a hyperbolic plane $U$ in its $(1,1)$-part admits  an orientation reversing Hodge isometry. This assertion applies to the Hodge structure
$\widetilde H(S,\ZZ)$ of a K3 surface $S$, but it is not clear whether also $\widetilde H(S,\alpha,\ZZ)$ of a twisted K3 surface $(S,\alpha)$ (see below)  admits an orientation reversing Hodge isometry. The latter would be important for
adapting the arguments in \cite{HMS} to the description of the image of $\Aut(\Db(S,\alpha))\to\Aut(\widetilde H(S,\alpha,\ZZ))$, see \cite{Reinecke} for partial results.
\end{remark}
\subsection{} Let us now turn to the geometric interpretation of certain periods in $Q$.
Recall that for a K3 surface $S$ the extended K3 (or Mukai) lattice $\widetilde H(S,\ZZ)$ is abstractly isomorphic
to $\widetilde \Lambda$. Moreover, $\widetilde H(S,\ZZ)$ comes with a natural Hodge structure of weight two defined
by $$\widetilde H^{2,0}(S)\coloneqq H^{2,0}(S)\text{ and }\widetilde H^{1,1}(S)\coloneqq H^{1,1}(S)\oplus (H^0\oplus H^4)(S,\CC).$$
For a Brauer class $\alpha\in{\rm Br}(S)\cong H^2(S,{\mathbb G}_m)\cong H^2(S,\ko_S^*)_{\rm tors}$ we have introduced
in \cite{HuyInt} the weight-two Hodge structure $\widetilde H(S,\alpha,\ZZ)$. As a lattice
this is still isomorphic to $\widetilde\Lambda$ and its Hodge structure is determined
by
$$\widetilde H^{2,0}(S,\alpha)\coloneqq\CC\cdot(\sigma+B\wedge\sigma)\text{ and } \widetilde H^{1,1}(S,\alpha)\coloneqq
\exp(B)\cdot \widetilde H^{1,1}(S).$$
Here, $0\ne \sigma\in H^{2,0}(S)$ and
$B\in H^2(S,\QQ)$ maps to $\alpha$ under the exponential
map 
$$\xymatrix{H^2(S,\QQ)\ar[r]& H^2(S,\ko_S)\ar[r]^{\exp}& H^2(S,\ko_S^*).}$$
The isomorphism type of the Hodge structure is independent of the choice of $B$.

\begin{definition}
A period $x\in Q$ is of \emph{K3 type} (resp.\ \emph{twisted K3 type}) if there exists a
K3 surface $S$ (resp.\ a twisted K3 surface $(S,\alpha\in{\rm Br}(S))$)
such that the Hodge structure on $\widetilde\Lambda$ defined by $x$ is Hodge isometric to $\widetilde H(S,\ZZ)$
(resp.\ $\widetilde H(S,\alpha,\ZZ)$).  
\end{definition}

The sets of periods of K3 type and twisted K3 type will be denoted
$$Q_{\rm K3}\subset Q_{\rm K3'}\subset Q.$$

There is also a geometric interpretation for points outside $Q_{\rm K3'}$ in terms
of symplectic structures \cite{HuyInt}, but those are a priori inaccessible by algebro-geometric techniques (see however
Section \ref{sec:GenK3}).

For the following recall that the twisted hyperbolic plane $U(n)$ is the rank two lattice with
intersection matrix
\scalebox{0.7}{$\left(\begin{array}{cc}0 & n \\n & 0\end{array}\right)$}.  The standard isotropic generators
will be denoted $e_n,f_n$ or simply $e,f$. Part i) of the next lemma is well known.

\begin{lem}\label{lem:UUn} Consider a period point $x\in Q$. Then:\\
{\rm i)} $x\in Q_{\rm K3}$ if and only if there exists an embedding $U\,\hookrightarrow \widetilde\Lambda$ 
into the $(1,1)$-part of the Hodge structure defined by $x$.\\
{\rm ii)} $x\in Q_{\rm K3'}$ if and only if there exists a (not necessarily  primitive) embedding $U(n)\,\hookrightarrow \widetilde\Lambda$ for some
$n\ne0$ into the $(1,1)$-part of the Hodge structure defined by $x$.
\end{lem}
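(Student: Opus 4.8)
The plan is to treat the two parts by the same strategy: translate the existence of a twisted K3 structure into a lattice-theoretic statement about which isotropic vectors sit in the $(1,1)$-part of the Hodge structure on $\widetilde\Lambda$ attached to $x$. Part i) is the classical characterization of K3 type: if $x\in Q_{\rm K3}$, i.e.\ the Hodge structure is Hodge isometric to $\widetilde H(S,\ZZ)$ for a K3 surface $S$, then the summand $(H^0\oplus H^4)(S,\ZZ)\cong U$ lies in $\widetilde H^{1,1}(S)$ by definition, so pulling it back through the Hodge isometry gives $U\hookrightarrow\widetilde\Lambda$ in the $(1,1)$-part of $x$. Conversely, given a primitive $U\hookrightarrow\widetilde\Lambda$ landing in the $(1,1)$-part, its orthogonal complement is an even lattice of signature $(3,19)$, hence isometric to $\Lambda$, carrying a weight-two Hodge structure with one-dimensional $(2,0)$-part; by the surjectivity of the period map for K3 surfaces this is $H^2(S,\ZZ)$ for some $S$, and reassembling $U\oplus\Lambda\cong\widetilde\Lambda$ exhibits the Hodge structure as $\widetilde H(S,\ZZ)$. (If the given $U$ is not primitive one first saturates; a rank-two sublattice isometric to $U$ that is not primitive would have to sit inside a unimodular overlattice of $U$ of the same rank, which forces it to equal its saturation, so primitivity is automatic here — this is exactly why part i) only needs ``an embedding''.)

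For part ii) the key is the description of $\widetilde H(S,\alpha,\ZZ)$ recalled above: writing $\alpha=\exp(B)$ with $B=B_0/n$ for a primitive $B_0\in H^2(S,\ZZ)$ and $n=\mathrm{ord}(\alpha)$, the class $e=\mathrm{pr}_{H^0}$ together with $f_{\mathrm{new}}\coloneqq f+B_0-\tfrac{(B_0)^2}{2n}e$ (suitably integralized) spans, inside the twisted Hodge structure, a rank-two sublattice on which the form is $\begin{pmatrix}0&n\\ n&0\end{pmatrix}$, i.e.\ a copy of $U(n)$ in $\widetilde H^{1,1}(S,\alpha)$. So $x\in Q_{\rm K3'}$ gives $U(n)\hookrightarrow\widetilde\Lambda$ in the $(1,1)$-part. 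Conversely, suppose $U(n)\hookrightarrow\widetilde\Lambda$ with isotropic generators $e,f$ lying in the $(1,1)$-part. Saturate $\ZZ e$ to a primitive isotropic vector $e_0$ (replacing $e$ by a primitive multiple changes nothing essential); then $e_0^\perp/\ZZ e_0$ is an even lattice of signature $(3,19)$, hence $\cong\Lambda$, and it inherits a weight-two Hodge structure which, by surjectivity of the period map, is $H^2(S,\ZZ)$ for some K3 surface $S$. Choosing a splitting $\widetilde\Lambda\cong U\oplus\Lambda$ with $e_0$ the first standard basis vector identifies the original Hodge structure with a ``B-field shift'' of $\widetilde H(S,\ZZ)$: the $(2,0)$-line, being isotropic and pairing suitably with $e_0$, has the form $\CC(\sigma+B\wedge\sigma)$ for some $B\in H^2(S,\QQ)$, and the existence of the second generator $f$ of $U(n)$ in the $(1,1)$-part forces $B$ to have denominator dividing $n$, so $\alpha\coloneqq\exp(B)\in\Br(S)$ is well-defined and the Hodge structure is $\widetilde H(S,\alpha,\ZZ)$.

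The main obstacle is the converse of part ii): extracting an honest B-field from the mere existence of a (possibly imprimitive, possibly non-split-off) copy of $U(n)$. One has to check that after saturating $e$ the second vector $f$ can be adjusted within the $(1,1)$-part so that $\{e_0,f\}$ still spans something of the form $U(n')$, that the resulting rational class $B$ really lies in $\tfrac1n H^2(S,\ZZ)$ rather than in some larger denominator, and that the sign/orientation conventions match so that one genuinely lands in $Q_{\rm K3'}$ as defined. All of this is lattice bookkeeping with the discriminant form of $U(n)$ (namely $\ZZ/n\oplus\ZZ/n$ with the standard pairing) together with Nikulin's embedding/overlattice results already cited in the paper, but it is where the care is needed; everything else is a direct application of surjectivity of the period map and the definitions of $\widetilde H(S,\ZZ)$ and $\widetilde H(S,\alpha,\ZZ)$.
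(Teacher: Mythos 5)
Your proposal is correct and follows essentially the same route as the paper: the forward direction of ii) by exhibiting the copy of $U(n)$ spanned by (a multiple of) $\exp(B)$ applied to $H^0$ together with $H^4$ inside $\widetilde H^{1,1}(S,\alpha,\ZZ)$, and the converse by passing to a primitive isotropic vector $e_0$, splitting off a hyperbolic plane (equivalently working with $e_0^\perp/\ZZ e_0\cong\Lambda$), invoking surjectivity of the period map, and using the second generator of $U(n)$ to force the B-field to be rational with denominator dividing $n$. The "main obstacle" you flag is handled exactly as you suggest (after saturating $e$ one gets $(e_0.f_n)=n/m\in\ZZ$, so one still has a $U(n/m)$, and orientation plays no role in the definition of $Q_{\rm K3'}$), so there is no gap.
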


\begin{proof} We prove the second assertion; the first one is even easier. Start with a twisted K3 surface $(S,\alpha)$ and
pick a lift $B\in H^2(S,\QQ)$ of $\alpha$. Then
the algebraic part $\widetilde H^{1,1}(S,\alpha,\ZZ)=\exp(B)\cdot \widetilde H^{1,1}(S,\QQ)\cap \widetilde H(S,\ZZ)$
contains the  lattice $(\ZZ\cdot (1,B,B^2/2)\cap \widetilde H(S,\ZZ))\oplus H^4(S,\ZZ)$, which is isomorphic to $U(n)$
for $n$ minimal with $n(1,B,B^2/2)\in \widetilde H(S,\ZZ)$.

Conversely, assume $U(n)\subset\widetilde\Lambda$ is of type $(1,1)$ with respect to $x$. Choosing $n$ minimal,
we can assume that the standard isotropic generator $e_n=e$ is primitive in $\widetilde\Lambda$. But then
$e\in U(n)$ can  be completed to a sublattice of $\widetilde\Lambda$ isomorphic to the hyperbolic
plane $U=\langle e,f\rangle$, which therefore induces an orthogonal decomposition (usually different from the one defining $\widetilde\Lambda$)
\begin{equation}\label{eqn:decomp}
\widetilde\Lambda\cong\Lambda\oplus U.
\end{equation}

With respect to (\ref{eqn:decomp})  the second basis vector $f_n\in U(n)$
can be written as $f_n=\gamma+nf+k e$ with $\gamma\in\Lambda$. Similarly,  a  generator of the $(2,0)$-part 
of the Hodge structure determined by $x$ is orthogonal
to $e$ and hence of the form $\sigma+\lambda e$ for some $\sigma\in\Lambda\otimes\CC$ and $\lambda\in\CC$.
However, it is also orthogonal to $f_n$ and so $(\gamma.\sigma)+n\lambda=0$. 
Now set $B:=-(1/n)\gamma$. Then $\sigma+\lambda e=\sigma+B\wedge \sigma$, where $B\wedge\sigma$ stands for $(B.\sigma)e$.

 Eventually, the surjectivity of the period map implies that $\sigma\in\Lambda\otimes\CC$ can be realized as the period of some K3 surface $S$,
i.e.\ there exists an isometry $H^2(S,\ZZ)\cong\Lambda$ identifying $H^{2,0}(S)$ with
$\CC\cdot\sigma\subset\Lambda\otimes\CC$.  Here one uses $(\sigma.\sigma)=(\sigma+\lambda e.\sigma+\lambda e)=0$ and $(\sigma.\bar \sigma)=(\sigma+\lambda e.\bar\sigma+\bar\lambda e)>0$.
Mapping $H^4(S,\ZZ)$ to $\ZZ\cdot e\subset U\subset\Lambda\oplus
U$ in (\ref{eqn:decomp}) and defining $\alpha\in{\rm Br}(S)$ as the Brauer class induced 
by $B$ under $\Lambda\otimes\QQ\cong H^2(S,\QQ)\to H^2(S,{\mathbb G}_m)$ yields a Hodge isometry
between $\widetilde H(S,\alpha,\ZZ)$  and the Hodge structure defined by $x$ on $\widetilde\Lambda$.
\end{proof}

\begin{cor}
The sets $Q_{\rm K3}\subset Q_{\rm K3'}\subset Q$ can be described as the
intersections of $Q$ with countably many linear subspaces of codimension two:
$$Q_{\rm K3}=Q\cap\bigcup U^\perp\subset Q_{\rm K3'}=Q\cap\bigcup U(n)^\perp\subset Q.$$
Here, the first union is over all embeddings $U\,\hookrightarrow \widetilde\Lambda$  and the second over all
 $U(n)\,\hookrightarrow\widetilde\Lambda$ with arbitrary $n\ne0$.\qed
\end{cor}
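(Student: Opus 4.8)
The plan is to deduce this corollary directly from Lemma~\ref{lem:UUn} by reinterpreting each of the two conditions appearing there as an incidence condition against a linear subspace of $\PP(\widetilde\Lambda\otimes\CC)$. First I would note the general principle: for any nonzero sublattice $M\subset\widetilde\Lambda$, the set of periods $x\in Q$ for which $M$ lies in the $(1,1)$-part of the Hodge structure determined by $x$ is exactly $Q\cap M^\perp$, where $M^\perp$ here denotes the linear subspace $\{x\in\PP(\widetilde\Lambda\otimes\CC)\mid (x.m)=0\text{ for all }m\in M\}$. Indeed, the $(2,0)$-part of the Hodge structure is the line $\CC x$, so $M$ is of type $(1,1)$ if and only if $M$ is orthogonal to both $x$ and $\bar x$; but $(m.x)=0$ for all $m\in M$ automatically forces $(m.\bar x)=0$ as well (conjugate, and use that $M$ is real), so a single orthogonality condition suffices. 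Hence $Q\cap M^\perp$ is cut out by $\rk(M)$ linear equations; when $M\cong U$ or $M\cong U(n)$ has rank two and is nondegenerate, these two equations are independent on $Q$, so $M^\perp$ has codimension two.

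Next I would assemble the two unions. By Lemma~\ref{lem:UUn}(i), $x\in Q_{\rm K3}$ if and only if there exists an embedding $U\hookrightarrow\widetilde\Lambda$ whose image is of type $(1,1)$ with respect to $x$, which by the principle above is equivalent to $x$ lying in $Q\cap U^\perp$ for at least one such embedding; taking the union over all embeddings $U\hookrightarrow\widetilde\Lambda$ gives $Q_{\rm K3}=Q\cap\bigcup U^\perp$. Likewise, by Lemma~\ref{lem:UUn}(ii), $x\in Q_{\rm K3'}$ if and only if $x\in Q\cap U(n)^\perp$ for some embedding $U(n)\hookrightarrow\widetilde\Lambda$ with $n\neq0$, giving $Q_{\rm K3'}=Q\cap\bigcup U(n)^\perp$ with the union over all such embeddings. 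The inclusion $Q_{\rm K3}\subset Q_{\rm K3'}$ is then immediate since $U=U(1)$ appears among the $U(n)$ (or, more directly, it is already recorded in Lemma~\ref{lem:UUn}).

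Finally I would check that each union is countable. Since $\widetilde\Lambda$ is a fixed lattice of finite rank, for each fixed $n$ the set of lattice embeddings $U(n)\hookrightarrow\widetilde\Lambda$ is a subset of the finite-rank lattice $\widetilde\Lambda\oplus\widetilde\Lambda$ (an embedding is determined by the images of the two standard generators) and is therefore countable; ranging over the countably many values of $n\neq0$ keeps the total countable. The same applies to the embeddings $U\hookrightarrow\widetilde\Lambda$. This gives the stated description of $Q_{\rm K3}$ and $Q_{\rm K3'}$ as intersections of $Q$ with countably many codimension-two linear subspaces. I do not anticipate a real obstacle here: the only point requiring a word of care is the claim that the two linear equations defining $U(n)^\perp$ remain independent after intersecting with $Q$, i.e.\ that $U(n)^\perp$ genuinely has codimension two in $Q$ rather than dropping; this follows because the restriction of the nondegenerate form on $\widetilde\Lambda\otimes\CC$ to the hyperbolic plane $U(n)\otimes\CC$ is nondegenerate, so the two linear functionals $x\mapsto(x.e_n)$ and $x\mapsto(x.f_n)$ are linearly independent on all of $\PP(\widetilde\Lambda\otimes\CC)$, hence a fortiori cut a codimension-two locus through the open subset $Q$.
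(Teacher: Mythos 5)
Your proof is correct and follows exactly the route the paper intends: the corollary is stated with an immediate \qed as a direct consequence of Lemma \ref{lem:UUn}, and your write-up simply makes explicit the routine translation (a real class is of type $(1,1)$ iff it is orthogonal to $x$, nondegeneracy of $U(n)$ gives codimension two, and embeddings of a fixed lattice into $\widetilde\Lambda$ form a countable set). Nothing is missing and nothing differs in substance from the paper's (implicit) argument.
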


\subsection{}\label{sec:DK3'codim}
However, it will turn out that the further intersection with  $D$ yields  countable unions of codimension one subsets.
These intersections are denoted by
$$D_{\rm K3}\coloneqq D\cap Q_{\rm K3}\subset D_{\rm K3'}\coloneqq D\cap Q_{\rm K3'}\subset D$$
and will be viewed as the sets of cubic periods that define generalized K3 periods of K3 type and of twisted K3 type,
respectively.
So

$\bullet$ $x\in D_{\rm K3}$ if and only if there exists a K3 surface $S$ such that the Hodge structure on $\widetilde\Lambda$ defined
by $x$ is Hodge isometric to $\widetilde H(S,\ZZ)$.

$\bullet$  $x\in D_{\rm K3'}$ if and only if there exists
a twisted K3 surface $(S,\alpha\in{\rm Br}(S))$ such that the Hodge structure on $\widetilde\Lambda$ defined
by $x$ is Hodge isometric to $\widetilde H(S,\alpha,\ZZ)$.

\smallskip

We remark for later use that for very  general $x\in D_{\rm K3}$ (or $x\in D_{\rm K3'}$) the algebraic part $\widetilde H^{1,1}(S,\ZZ)$ (resp.\
$\widetilde H^{1,1}(S,\alpha,\ZZ)$) is of rank three.
\medskip

We will first explain that $D_{\rm K3'}$ is a countable union of hyperplane sections. A second proof for the same assertion that
also works for $D_{\rm K3}$ is provided in Section \ref{sec:DK3}.

\begin{prop}\label{prop:DtwK3De}
The set of twisted K3 periods in $D$ can also be described as the countable union of hyperplane sections:
$$D_{\rm K3'}=D\cap\bigcup e^\perp.$$
 Here, the union runs over all $0\ne e\in\widetilde\Lambda$ with $(e)^2=0$.
\end{prop}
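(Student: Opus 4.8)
The plan is to reduce the statement to Lemma \ref{lem:UUn}(ii), which already characterizes $D_{\rm K3'}$ as the locus of periods admitting an embedding $U(n)\,\hookrightarrow\widetilde\Lambda$ for some $n\neq0$ into the $(1,1)$-part. The key observation is that having such a $U(n)$ of type $(1,1)$ is equivalent, on the level of the $(1,1)$-lattice, to containing a single nonzero isotropic vector $e\in\widetilde\Lambda$ of type $(1,1)$; that is, $x\in D_{\rm K3'}$ if and only if the integral $(1,1)$-part of the Hodge structure on $\widetilde\Lambda$ contains some $0\neq e$ with $(e)^2=0$. The latter condition is manifestly the statement $x\in D\cap\bigcup e^\perp$, since $e$ being of type $(1,1)$ for a period $x\in D\subset\PP(A_2^\perp\otimes\CC)$ (so that the $(2,0)$-line lies in $A_2^\perp\otimes\CC$) means precisely that the period point $x$ lies on the hyperplane $e^\perp$ in $\PP(\widetilde\Lambda\otimes\CC)$, intersected with $D$.

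So the two steps are: first, if $U(n)\subset\widetilde\Lambda$ is of type $(1,1)$, then in particular each of its isotropic generators $e_n$ is a nonzero isotropic vector of type $(1,1)$, giving the inclusion $D_{\rm K3'}\subseteq D\cap\bigcup e^\perp$. Second, conversely, given any $0\neq e\in\widetilde\Lambda$ with $(e)^2=0$ lying in the $(1,1)$-part, I would produce a $U(n)$ of type $(1,1)$: replacing $e$ by a primitive multiple (which is still isotropic and of type $(1,1)$), primitive isotropic vectors in the even unimodular lattice $\widetilde\Lambda$ extend to a hyperbolic plane $U=\langle e,f\rangle\subset\widetilde\Lambda$; but $f$ need not be of type $(1,1)$. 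However, one does not need $f$ itself — one only needs \emph{some} companion isotropic vector $f'$ with $(e.f')=n\neq0$ that \emph{is} of type $(1,1)$. Take for instance any integral $(1,1)$-class $\gamma$ with $(e.\gamma)=:m\neq0$ (such a $\gamma$ exists because $e$ is nonzero and $\widetilde\Lambda$ is unimodular, so the functional $(e.-)$ is surjective onto $\ZZ$, and one can choose a preimage that is algebraic — e.g. $e$ and $\gamma$ already span an algebraic sublattice; if $(\gamma)^2\neq0$ adjust by a multiple of $e$ to kill the self-intersection modulo the needed congruence, or simply observe $\langle e,\gamma\rangle$ contains a $U(n)$ outright when $(\gamma)^2=2k$ is even, using $e$ and $\gamma-ke$ after suitable primitivization). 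This yields $U(n)=\langle e, f'\rangle$ with $f'$ algebraic, hence of type $(1,1)$, so $x\in D_{\rm K3'}$ by Lemma \ref{lem:UUn}(ii).

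The main obstacle is the second direction: making sure that from a single isotropic $(1,1)$-class $e$ one can always manufacture an \emph{isotropic} partner that is also $(1,1)$, i.e.\ controlling the self-intersection. The clean way is: let $e$ be primitive isotropic of type $(1,1)$; pick any $\delta\in\widetilde\Lambda$ of type $(1,1)$ with $(e.\delta)=1$ (possible since the algebraic lattice has rank $\geq 2$ here — indeed $e\in\widetilde H^{1,1}$ and one shows the pairing $\widetilde H^{1,1}\times\widetilde H^{1,1}\to\ZZ$ together with $(e.-)$ being nonzero forces such a $\delta$ to exist after passing to $\QQ$ and clearing denominators, accepting a larger $n$). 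Then for $N$ large the vector $f':=N\delta - \tfrac{N^2(\delta)^2}{2}\,e$ — or its integral rescaling when $(\delta)^2$ is odd, replacing $e$ by $2e$ — is isotropic, algebraic, and pairs nontrivially with $e$, so $\langle e,f'\rangle\cong U(n)$ sits in the $(1,1)$-part. The bookkeeping about parities and whether to rescale $e$ is the only delicate point; everything else is formal once Lemma \ref{lem:UUn}(ii) is in hand. I would also remark that this gives a second, more transparent description complementing the Corollary above, and that it is this codimension-one (rather than codimension-two) phenomenon that reflects the extra constraint $x\in D\subset Q$.
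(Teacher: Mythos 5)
Your overall strategy is the paper's: the inclusion $D_{\rm K3'}\subseteq D\cap\bigcup e^\perp$ is immediate because every $U(n)$ contains an isotropic vector, and the converse amounts to manufacturing, from a single primitive isotropic $(1,1)$-class $e$, a second algebraic class $f'$ with $(f')^2=0$ and $(e.f')\neq0$, then invoking Lemma \ref{lem:UUn}(ii). But the one step that actually carries the content of the proposition --- the existence of \emph{any} integral $(1,1)$-class $\delta$ with $(e.\delta)\neq0$ --- is asserted rather than proved, and the justifications you offer are circular. Surjectivity of $(e.\,-)$ onto $\ZZ$ holds on all of $\widetilde\Lambda$ by unimodularity, but you need a preimage in the integral $(1,1)$-part; saying ``one can choose a preimage that is algebraic'' or ``$(e.\,-)$ being nonzero [on $\widetilde H^{1,1}$] forces such a $\delta$ to exist'' is exactly the claim at issue. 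A priori a sublattice of a nondegenerate lattice can be degenerate, so ``the algebraic lattice has rank $\geq 2$'' does not rule out $e$ being orthogonal to everything algebraic. The missing idea is a signature argument using the constraint $x\in D$: every class of $A_2$ is of type $(1,1)$ for $x\in D\subset\PP(A_2^\perp\otimes\CC)$, and if $e$ were orthogonal to all of $A_2$, i.e.\ $e\in A_2^\perp$, then the positive $2$-plane spanned by ${\rm Re}(x),{\rm Im}(x)$ would lie in $e^\perp\cap(A_2^\perp\otimes\RR)$, which for isotropic $e$ in a space of signature $(2,20)$ has at most one positive direction --- a contradiction. Hence some $a\in A_2$ satisfies $(a.e)\neq0$, and $f\coloneqq(a.e)a-((a)^2/2)e$ is the desired isotropic algebraic partner. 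This is precisely where the codimension drops from two (in $Q$) to one (in $D$), as you correctly anticipate in your closing remark without supplying the argument.

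Two smaller points. Your formula $f'=N\delta-\tfrac{N^2(\delta)^2}{2}e$ is isotropic only for $N=1$ (one computes $(f')^2=N^2(\delta)^2(1-N(e.\delta))$ when $(e.\delta)=1$); the correct general formula is $f'=(e.\delta)\delta-((\delta)^2/2)e$. And the parity worry is vacuous: $\widetilde\Lambda$ is even, so $(\delta)^2/2$ is always an integer and no rescaling of $e$ is ever needed.
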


\begin{proof} One inclusion follows from the fact that any $U(n)$ contains an isotropic vector.
For the converse, assume $x\in e^\perp$ for some primitive isotropic $0\ne e\in \widetilde\Lambda$.
Then $e\not\in A_2^\perp$, as otherwise the positive plane corresponding to $x$ would be
contained in the orthogonal complement of $e$ in $A_2^\perp$, which has only one positive direction left.
Hence, there exists  $a\in A_2$ with
$(a.e)\ne0$. Let $f\coloneqq (a.e)a-((a)^2/2)e$, which satisfies $(f)^2=0$ and
$(f.e)=(a.e)^2\eqqcolon n$. Hence, $e,f$ span  a twisted hyperbolic
plane $U(n)$  in the
$(1,1)$-part of the Hodge structure defined by $x$.
\end{proof}



There is yet another class of hyperplane sections of $D$ that is of importance to us.
We let $$D_{\rm sph}\coloneqq D\cap\bigcup\delta^\perp,$$
where the union is over all $\delta\in \widetilde\Lambda$ with $(\delta)^2=-2$ and call it the
set of \emph{periods with spherical classes}. Indeed, $x\in D$ is contained in $D_{\rm sph}$ if and only if
the  Hodge structure on $\widetilde\Lambda$ defined by $x$ admits an integral $(1,1)$-class
$\delta$ with $(\delta)^2=-2$ and those classes typically appear as Mukai vectors of spherical objects,
see Example \ref{exa:sphtw}.

Note that there are natural inclusions
$$D_{\rm K3}\subset D_{\rm sph}\subset D,$$
for every hyperbolic plane $U$ contains a $(-2)$-class. 
However, $D_{\rm K3'}$ is
not contained in $D_{\rm sph}$ and, more precisely, the inclusions
$$D_{\rm K3} \subsetneq D_{\rm K3'}\cap D_{\rm sph}\subsetneq D_{\rm K3'}, D_{\rm sph}$$
are all proper, see Example \ref{ex:properincl} and Proposition \ref{prop:Dsphnot}.

\subsection{}\label{sec:DK3}
It is instructive to study the sets $D_{\rm K3}\subset D_{\rm K3'}$ and $D_{\rm K3}\subset D_{\rm sph}$
from a  more cubic perspective, i.e.\ in terms
of the lattices $K_d$. 

For any $h\in K_d\subset {\rm I}_{2,21}$ as in Section \ref{sec:cubiclat} one introduces
the hyperplane section
$$D\cap K_d^\perp\subset \PP(A_2^\perp\otimes\CC)$$
of all cubic periods orthogonal to $K_d\cap A_2^\perp$. In other words, $D\cap K_d^\perp$ is the set 
of cubic periods for which the  generator $v$ of $K_d\cap A_2^\perp$ is of type $(1,1)$, i.e.\ $D\cap K_d^\perp=D\cap v^\perp$.
Then one defines $$D_d\coloneqq D\cap \bigcup  K_d^\perp,$$
where the union runs over all $h\in K_d\subset {\rm I}_{2,21}$ as above.
So, for each positive $d\equiv0,2\,(6)$ the set $D_d$ is a countable union of hyperplane sections of $D$.
Dividing $D_d$ by the subgroup $\tilde\OO(h^\perp)=\OO({\rm I}_{2,21},h)\subset \OO({\rm I}_{2,21})$ of elements fixing $h$ yields  Hassett's irreducible
divisor $$\kc_d\coloneqq D_d/\tilde\OO(h^\perp).$$

\noindent
Consider the following conditions for an even integer $d>6$:
\smallskip

($\ast$)  $d\equiv0,2\,(6)$.

\smallskip

($\ast$$\ast$) 
$d$ is even and $d/2$ is not divisible by $9$ or any prime
$p\equiv 2\, (3)$
\smallskip

($\ast$$\ast'$) 
$d$ is even and  in 
the prime factorization $d/2=\prod p_i^{n_i}$ one has $n_i\equiv0\, (2)$ for all primes $p_i\equiv2\, (3)$.
\medskip

\noindent
Obviously, ($\ast$$\ast$) implies ($\ast$$\ast'$). More precisely, if $d$ satisfies ($\ast$$\ast$) then  ($\ast$$\ast'$)
holds for all $k^2d$.

\begin{remark}
Conditions ($\ast$) and ($\ast$$\ast$) have first been introduced and studied by Hassett \cite{HassettComp}.
He shows that $D_d$ is not empty
if and only if ($\ast$) is satisfied. Moreover, $d$ satisfies ($\ast$$\ast$) if and only if
for all cubics $X$ with period $x$ contained in $D_d$ there exists a polarized K3 surface $(S,H)$
such that its primitive cohomology $H^2(S,\ZZ)_{\rm pr}$ is Hodge isometric to the Hodge structure
on $K_d^\perp$ defined by $x$. To get polarized K3 surfaces and not only quasi-polarized ones, one has
to use a result of Voisin \cite[Sec.\ 4, Prop.\ 1]{Voisin} saying that $H^{2,2}(X,\ZZ)_{\rm pr}$ does not contain any
class of square $2$.

On the level of lattices this boils down to the observation that for $v\in A_2^\perp$ as in (\ref{eqn:eplicv}), say for $d\equiv0\,(6)$, 
its orthogonal complement in $A_2^\perp$ is isometric to $E_8(-1)^{\oplus 2}\oplus U\oplus A_2(-1)\oplus\ZZ(d/3)$.
And indeed for $d$ satisfying ($\ast$$\ast'$)  $A_2(-1)\oplus\ZZ(d/3)\cong U\oplus \ZZ(-d)$ (see \cite[Cor.\ 1.10.2, 1.13.3]{NikulinInt} or \cite[Thm.\ 14.1.5]{HK3})
and, therefore,
$v^\perp\cong E_8(-1)^{\oplus 2}\oplus U^{\oplus 2}\oplus \ZZ(-d)$, which is the transcendental lattice of a very general
polarized K3 surface of degree $d$. A similar argument holds for $d\equiv 2\,(6)$.
\end{remark}

\begin{prop}\label{prop:DtwDd}
With the above notations one has
$$D_{\rm K3}=\bigcup_{(\ast\ast)} D_d\text{ and } D_{\rm K3'}=\bigcup_{(\ast\ast')}D_d,$$
where $d$ runs through all $d$ satisfying {\rm ($\ast$$\ast$)} resp.\ {\rm ($\ast$$\ast'$)}.
\end{prop}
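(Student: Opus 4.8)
The plan is to translate the statement $D_{\rm K3}=\bigcup_{(\ast\ast)}D_d$ and $D_{\rm K3'}=\bigcup_{(\ast\ast')}D_d$ into the lattice-theoretic criteria already available: by Proposition \ref{prop:DtwK3De}, $x\in D_{\rm K3'}$ iff $x\perp e$ for some isotropic $0\ne e\in\widetilde\Lambda$, and (by the forthcoming untwisted analogue) $x\in D_{\rm K3}$ iff $x\perp e$ for a \emph{primitive} isotropic $e$ spanning a hyperbolic plane $U\subset\widetilde\Lambda$ of type $(1,1)$; on the other hand $x\in D_d$ iff $x\perp v$ for the distinguished vector $v\in A_2^\perp\cap K_d$ with $-(v)^2$ given by \eqref{eqn:v^2}. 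So the real content is a purely arithmetic equivalence: for $x$ very general on a hyperplane section, the primitive isotropic vectors (resp.\ the $U(n)$'s) in the $(1,1)$-lattice of $x$ correspond exactly to those $v^\perp$ with $d$ satisfying $(\ast\ast)$ (resp.\ $(\ast\ast')$), up to the $\OO$-action.

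The key steps, in order. First I would fix $x\in D\cap v^\perp$ very general, so that its integral $(1,1)$-lattice on $\widetilde\Lambda$ is exactly $\Gamma_d=$ saturation of $A_2\oplus\ZZ v$, of rank three, signature $(2,1)$, with ${\rm disc}(\Gamma_d)=d$ by Lemma \ref{lem:dics11}. The question ``$x\in D_{\rm K3'}$'' then becomes ``does $\Gamma_d$ represent $0$ primitively, i.e.\ contain an isotropic vector'' — equivalently, by Lemma \ref{lem:UUn}, does $\Gamma_d$ contain a $U(n)$; and ``$x\in D_{\rm K3}$'' becomes ``does $\Gamma_d$ contain a primitive isotropic $e$ that completes to a hyperbolic plane in all of $\widetilde\Lambda$''. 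Second, I would analyze $v^\perp\subset A_2^\perp$ directly: as recorded in the Remark preceding the statement, for $d\equiv0\,(6)$ one has $v^\perp\cong E_8(-1)^{\oplus2}\oplus U\oplus A_2(-1)\oplus\ZZ(d/3)$ and similarly for $d\equiv2\,(6)$, and the rank-two piece $A_2(-1)\oplus\ZZ(d/3)$ (resp.\ its analogue) is isometric to $U\oplus\ZZ(-d)$ precisely when a discriminant-form condition holds. Third — the heart of the matter — I would show via Nikulin's theory (\cite[Thm.\ 1.14.4, Cor.\ 1.10.2, 1.13.3]{NikulinInt}) that this discriminant-form condition on the rank-$\le 3$ lattice $v^\perp$ (equivalently on $\Gamma_d$, since $\Gamma_d^\perp\cong v^\perp$) is satisfied iff $-d$ is represented by $U\oplus\ZZ(-d)$-type forms appropriately, and that a genus computation shows: $\Gamma_d$ contains a hyperbolic plane iff $d$ satisfies $(\ast\ast)$, while $\Gamma_d$ merely contains some isotropic vector (hence some $U(n)$) iff $d$ satisfies $(\ast\ast')$. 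The numerical shape of $(\ast\ast')$ — exponents of primes $p\equiv2\,(3)$ in $d/2$ being even — is exactly the condition for the ternary form of discriminant $d$ in the relevant genus to represent $0$, which is a classical Hasse-principle/Hilbert-symbol computation; I would run it prime by prime, the primes $p\equiv2\,(3)$ being the ramified ones for the quadratic form attached to $A_2$ (whose discriminant is $3$). Finally I would assemble the inclusions: $\bigcup_{(\ast\ast')}D_d\subseteq D_{\rm K3'}$ and $\bigcup_{(\ast\ast)}D_d\subseteq D_{\rm K3}$ follow from the ``if'' direction (exhibit the $U(n)$ or $U$); the reverse inclusions follow because if $x\in D_{\rm K3'}$ then its $(1,1)$-lattice contains an isotropic $e$, whence $\langle A_2,e\rangle^{\rm sat}$ is some $\Gamma_d$ with $v\in A_2^\perp$ of the required square, and $d$ then must satisfy $(\ast\ast')$ by step three; mutatis mutandis for $D_{\rm K3}$.

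I expect the main obstacle to be step three: matching the clean arithmetic condition $(\ast\ast')$ with the existence of an isotropic vector in $\Gamma_d$. Care is needed because $\Gamma_d$ is not the full $v^\perp$ but a rank-three lattice, and the two cases $d\equiv0\,(6)$ and $d\equiv2\,(6)$ behave slightly differently (in the second case $A_2\oplus\ZZ v$ sits with index three inside $\Gamma_d$, per \cite[Prop.\ 3.2.2]{HassettComp}), so the discriminant-form bookkeeping at the prime $3$ must be done separately. The distinction between $(\ast\ast)$ and $(\ast\ast')$ — the former forbidding $9\mid d/2$ and any $p\equiv2\,(3)$ dividing $d/2$, the latter only demanding even exponents — reflects precisely the difference between ``$\Gamma_d\supset U$'' (an unramified splitting at every prime) and ``$\Gamma_d$ isotropic'' (isotropy at every prime, allowing $p^{2k}$-scaled hyperbolic pieces), and I would make this precise by a local analysis of the discriminant quadratic form $q_{\Gamma_d}$, invoking Nikulin's existence/uniqueness results to pass from local data back to the global splitting. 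Everything else is routine lattice bookkeeping that can be cited from \cite{NikulinInt} and \cite{HassettComp}.
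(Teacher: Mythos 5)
Your treatment of the twisted equality $D_{\rm K3'}=\bigcup_{(\ast\ast')}D_d$ is sound and is essentially the route the paper takes: Proposition \ref{prop:DtwK3De} reduces everything to the existence of an isotropic $(1,1)$-class $e$, one passes to the rank-three lattice spanned by $A_2$ and the primitive generator $v$ of $\QQ\cdot e_2\cap A_2^\perp$ (with the two cases of \cite[Prop.\ 3.2.2]{HassettComp} according to the residue of $d$ modulo $6$), and the question becomes whether that indefinite ternary lattice is isotropic. The only real difference is packaging: the paper settles the arithmetic by asking directly when $2(x_1^2+x_2^2-x_1x_2)+(v)^2x^2=0$ has an integral solution and quoting the classical description (\ref{eqn:class}) of the integers represented by $A_2$ (via \cite{Kneser}), whereas you propose a genus/Hilbert-symbol computation on $\Gamma_d$. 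Since the form is indefinite of rank three, rational isotropy gives integral isotropy by Hasse--Minkowski, so your version works; it is just heavier machinery for the same local computation that the paper relegates to a footnote.

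The gap is in the untwisted equality, specifically the inclusion $D_{\rm K3}\subset\bigcup_{(\ast\ast)}D_d$, which you dismiss with ``mutatis mutandis''. The paper does not reprove this half at all --- it cites \cite[Thm.\ 3.1]{AT} --- and your reduction does not carry over. Given a hyperbolic plane $U=\langle e,f\rangle$ in the $(1,1)$-lattice $N$ of $x$, the rank-three lattice $\langle A_2,e\rangle^{\rm sat}$ built from one isotropic generator remembers only that it is isotropic: there is no reason for $f$ to lie in it, so your criterion ``$\Gamma_d$ contains $U$ iff $d$ satisfies $(\ast\ast)$'' cannot be applied to it, and you only recover $(\ast\ast')$ for that particular $d$. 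The difficulty is genuine precisely when ${\rm rk}\,N\geq 4$: there $A_2^\perp\cap N$ is negative definite of rank $\geq 2$ (all positive directions of $N$ lie in $A_2\otimes\RR$, so $U$ must mix nontrivially with $A_2$), the various primitive $v\in A_2^\perp\cap N$ give different values of $d$, and one has to show that at least one of them --- not necessarily the one obtained by projecting your chosen $e$ --- satisfies $(\ast\ast)$. That selection is the actual content of the Addington--Thomas argument and is not a formal consequence of your step three. Either cite \cite[Thm.\ 3.1]{AT} for this half, as the paper does, or supply the missing analysis of $\langle A_2,U\rangle^{\rm sat}$ when it has rank four.
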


\begin{proof}
The first equality is due to Addington--Thomas \cite[Thm.\ 3.1]{AT}. Indeed, they show that $x\in D_d$ with $d$ satisfying
($\ast$$\ast$) if and only if there exists a hyperbolic plane $U\subset\widetilde \Lambda$ which is of type $(1,1)$ with respect to $x$.
The latter is in turn equivalent to $x\in D_{\rm K3}$, see Lemma \ref{lem:UUn}.\footnote{The `only if' direction is
a consequence of Hassett's original result saying that  $x\in D_d$ with $d$ satisfying
($\ast$$\ast$) if and only if there exists a polarized K3 surface $(S,H)$
such that $H^2(S,\ZZ)_{\rm prim}$ is Hodge isometric to the Hodge structure on $K_d^\perp$ 
given by $x$. As the orthogonal complement of $H^2(S,\ZZ)_{\rm prim}\subset\widetilde H(S,\ZZ)\cong\widetilde
\Lambda$ contains a hyperbolic plane, by \cite[Thm.\ 1.14.4]{NikulinInt} this Hodge isometry extends to a Hodge isometry
of $\widetilde H(S,\ZZ)$ with the Hodge structure on $\widetilde\Lambda$ given by $x$. For the other direction
one has to show that any Hodge isometry between $\widetilde H(S,\ZZ)$ and the one on $\widetilde\Lambda$ given by
$x$ can be used to get a Hodge isometry between the Hodge structure on $K_d^\perp\cap A_2^\perp\subset\widetilde \Lambda$ and $H^2(S,\ZZ)_{\rm prim}$ for some polarization on $S$.} 

Maybe surprisingly, the second assertion is easier to prove. We include the elementary argument.
Due to Corollary \ref{prop:DtwK3De} we know $D_{\rm K3'}=D\cap\bigcup e^\perp$ with
$0\ne e\in\widetilde\Lambda$ isotropic. So for one inclusion one has to 
show that each $D\cap e^\perp$ is of the form $D_d$ with $d$ satisfying ($\ast$$\ast'$).
Decompose $e=e_1+e_2\in (A_2\oplus A_2^\perp)\otimes\QQ$.
Let then $v\in A_2^\perp$ such that $\QQ\cdot e_2\cap A_2^\perp=\ZZ\cdot v$
and define $K_d\subset {\rm I}_{2,21}$ as the saturation of the sublattice spanned by $v\in A_2^\perp\subset{\rm I}_{2,21}$
and $h$. We have to show that the discriminant $d$ of $K_d$ satisfies ($\ast$$\ast'$).

Assume first that $A_2\oplus \ZZ \cdot v\subset \widetilde\Lambda$ is primitive. Then
$d\equiv0\,(6)$ and $d=-3(v)^2$, see Section \ref{sec:cubiclat}. As $e\in A_2\oplus\ZZ\cdot v$ in this case,
the quadratic equation $2(x_1^2+x_2^2-x_1x_2)+(v)^2x^2=0$ admits an integral solution.
However, it is a classical result that 
\begin{equation}\label{eqn:class}
2n=(w)^2
\end{equation} for some $w\in A_2$ if and only if $n=\prod p_i^{n_i}$
with $n_i\equiv0\, (2)$ for all $p_i\equiv 2\, (3)$, see \cite{Kneser}.\footnote{For example, a prime $p$
can be written as $x^2+3y^2$ if and only if $p=3$ or $p\equiv1\, (3)$, see \cite{Cox}. Since $4(x^2+xy+y^2)=(2x+y)^2+3y^2$
and $(x_1^2+3y_1^2)\cdot(x_2^2+3y_2^2)=(x_1x_2-3y_1y_2)^2+3(x_1y_2+x_2y_1)^2$,
this proves one direction. The other one uses a computation with
Hilbert symbols to determine when $-nx_1^2+x_2^2+3x_3^2=0$ has a rational solution.}
But clearly this holds for $n=-(v)^2/2$ if and only if $d=6n$ satisfies ($\ast$$\ast'$).

Next assume that $A_2\oplus \ZZ \cdot v\subset \widetilde\Lambda$  has index three in its saturation.
Hence, $d\equiv 2\, (6)$ and $3d=-(v)^2$. Then argue as before, but now with the isotropic vector
$3e\in A_2\oplus\ZZ\cdot v$ and with $n=-(v)^2/2=3d/2$.

Running the argument backwards proves the reverse inclusion.
\end{proof}

So in particular, although $Q_{\rm K3}\subset Q_{\rm K3'}\subset Q$ are countable unions of codimension two
subsets, their restrictions $D_{\rm K3}\subset D_{\rm K3'}\subset D$ to $D$ are countable unions
of codimension one subsets. For $D_{\rm K3'}$ we have observed this already in Section \ref{sec:DK3'codim}.

\begin{remark}
As mentioned in \cite{AT,Add} and explained to me by Addington, condition ($\ast$$\ast$) is in fact equivalent to
the existence of a primitive $w\in A_2$ with $d=(w)^2$. And, as has become clear in the
above proof, condition ($\ast$$\ast'$) is equivalent to the existence of a (not necessarily primitive) $w\in A_2$
with $d=(w)^2$.
\end{remark}

The first values of $d>6$ that satisfy the various conditions  are 

\begin{center}
 \begin{tabular}[t]{|c|c|c|c|c|c|c|c|c|c|c|c|c|c|c|c|} \hline
 \raisebox{-0.03cm}{($\ast$)}&\raisebox{-0.03cm}{8}&\raisebox{-0.03cm}{12}&\raisebox{-0.03cm}{14}&\raisebox{-0.03cm}{18}&\raisebox{-0.03cm}{20} &\raisebox{-0.03cm}{24}&\raisebox{-0.03cm}{26}& \raisebox{-0.03cm}{30}&\raisebox{-0.03cm}{32}&\raisebox{-0.03cm}{36}&\raisebox{-0.03cm}{38}&\raisebox{-0.03cm}{42} &\raisebox{-0.03cm}{44}&\raisebox{-0.03cm}{48}\\
[1pt]\hline \raisebox{-0.03cm}{($\ast$$\ast$)} &\raisebox{-0.03cm}{}&\raisebox{-0.03cm}{}&\raisebox{-0.03cm}{14}&\raisebox{-0.03cm}{}&\raisebox{-0.03cm}{}&\raisebox{-0.03cm}{}&\raisebox{-0.03cm}{26}&\raisebox{-0.03cm}{}&\raisebox{-0.03cm}{}&\raisebox{-0.03cm}{}&\raisebox{-0.03cm}{38}&\raisebox{-0.03cm}{42} &\raisebox{-0.03cm}{}&\raisebox{-0.03cm}{}\\
                 [1pt]\hline
                 \raisebox{-0.03cm}{($\ast$$\ast'$)}&\raisebox{-0.03cm}{8}&\raisebox{-0.03cm}{}&\raisebox{-0.03cm}{14}&\raisebox{-0.03cm}{18}&\raisebox{-0.03cm}{} &\raisebox{-0.03cm}{24}&\raisebox{-0.03cm}{26}& \raisebox{-0.03cm}{}&\raisebox{-0.03cm}{32}&\raisebox{-0.03cm}{}&\raisebox{-0.03cm}{38}&\raisebox{-0.03cm}{42} &\raisebox{-0.03cm}{}&\raisebox{-0.03cm}{}\\
                 [1pt]\hline
  \end{tabular}
\end{center}

\bigskip

\begin{ex} 
For certain $d$ the condition that the period $x\in D$ of  a cubic $X$ is contained in $D_d$ has a geometric interpretation, 
see \cite[Sec.\ 4]{HassettComp}. For example, $x\in D_8$ if and only if $X$ contains a plane $\PP^2\subset X$,
or if $X$ is a Pfaffian cubic, then $x\in D_{14}$.
\end{ex}

Let $x\in D_d$ with $d$ satisfying ($\ast$$\ast'$). Then there exists a twisted K3 surface $(S,\alpha)$ such that
the Hodge structure defined by $x$ is Hodge isometric to $\widetilde H(S,\alpha,\ZZ)$.  If $x\in D_d$ is a very general point of $D_d$, then $\rk(\widetilde H^{1,1}(S,\alpha,\ZZ))=3$ and
$A_2\oplus \ZZ\cdot v\subset\widetilde H^{1,1}(S,\alpha,\ZZ)$ is of index one or three, respectively.

\begin{lem}\label{lem:OrderBrauer}
For the order of the Brauer class $\alpha$ one has
$${\rm ord}(\alpha)^2\mid d.$$
\end{lem}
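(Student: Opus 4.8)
The plan is to relate the order of $\alpha$ to the index of the sublattice $A_2\oplus\ZZ\cdot v$ inside its saturation $\Gamma_d$, and then connect that index to the discriminant via Lemma~\ref{lem:dics11}. Concretely, by Lemma~\ref{lem:UUn}(ii) and its proof, once $\widetilde H^{1,1}(S,\alpha,\ZZ)$ contains a twisted hyperbolic plane $U(n)\hookrightarrow\widetilde\Lambda$ coming from the class $(1,B,B^2/2)$, the order of $\alpha$ equals the minimal $n$ with $n(1,B,B^2/2)\in\widetilde H(S,\ZZ)$, i.e.\ the index of $\ZZ\cdot e\oplus H^4(S,\ZZ)$ inside its saturation (equivalently, the divisibility of the primitive isotropic vector $e$ generating the corresponding rank-one piece). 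So the first step is: identify ${\rm ord}(\alpha)$ with the index $[\,\overline{\ZZ e+\ZZ f_n}:\ZZ e+\ZZ f_n\,]$ inside $\widetilde H^{1,1}(S,\alpha,\ZZ)$, which by the description in the proof of Proposition~\ref{prop:DtwDd} lands us inside the rank-three lattice $\Gamma_d\cong\widetilde H^{1,1}(S,\alpha,\ZZ)$ (for very general $x\in D_d$).

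The second step is a lattice computation inside $\Gamma_d$. For very general $x\in D_d$ we have $\widetilde H^{1,1}(S,\alpha,\ZZ)=\Gamma_d$, the saturation of $A_2\oplus\ZZ\cdot v$, with ${\rm disc}(\Gamma_d)=d$ by Lemma~\ref{lem:dics11}. The isotropic vector $e$ producing the twisted hyperbolic plane lies in $\Gamma_d$, and ${\rm ord}(\alpha)$ is the divisibility $\mathrm{div}_{\Gamma_d}(e)$ of $e$ in $\Gamma_d$ (the positive generator of $(e.\Gamma_d)\subset\ZZ$). For any primitive isotropic $e$ in an even lattice $M$ of signature $(2,1)$ one has the general fact that $\mathrm{div}_M(e)^2$ divides ${\rm disc}(M)$: indeed $e^\perp/\ZZ e$ is negative definite of rank one, the discriminant form computation (or the exact sequence relating $\mathrm{disc}(M)$, $\mathrm{disc}(e^\perp)$ and $\mathrm{div}(e)$) gives $\mathrm{disc}(e^\perp)=\pm\,\mathrm{div}(e)^2\cdot(\text{something})$ up to the contribution of $e^\perp/\ZZ e$, so $\mathrm{div}(e)^2\mid\mathrm{disc}(M)=d$. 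Applying this with $M=\Gamma_d$ gives ${\rm ord}(\alpha)^2\mid d$.

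The third, slightly delicate step is to handle the case of an \emph{arbitrary} $x\in D_d$ (not just very general), where $\widetilde H^{1,1}(S,\alpha,\ZZ)$ may be strictly bigger than $\Gamma_d$, so one cannot directly use $\mathrm{disc}=d$. Here I would instead argue directly with the K3 surface: ${\rm ord}(\alpha)=\mathrm{div}_{\widetilde H(S,\ZZ)}(e)$ where $e$ spans the copy of $H^4(S,\ZZ)$ inside the orthogonal decomposition, and the relevant constraint is that $e$ is orthogonal to the whole transcendental lattice $T(S,\alpha)\cong T(X)(-1)$, which by the displayed isometry~(1) has discriminant controlled by $d$. More precisely, $e\in\widetilde H^{1,1}(S,\alpha,\ZZ)=T(S,\alpha)^\perp$ inside $\widetilde\Lambda$ (unimodular), so $\mathrm{div}_{\widetilde\Lambda}(e)\mid\mathrm{disc}(T(S,\alpha))$, and since $T(X)\subset H^{2,2}(X,\ZZ)$ sits in $K_d^\perp$ with discriminant data divisible by $d$ (cf.\ Lemma~\ref{lem:dics11} and the proof of Proposition~\ref{prop:DtwDd}), one gets $\mathrm{div}_{\widetilde\Lambda}(e)^2\mid d$; finally $\mathrm{div}_{\widetilde\Lambda}(e)=1$ is impossible unless $\alpha$ is trivial, and in general ${\rm ord}(\alpha)=\mathrm{div}_{\widetilde\Lambda}(e)$ after choosing $e$ primitive, giving the claim.

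The main obstacle I expect is the bookkeeping in the third step: making precise the relation between the divisibility of $e$, the order of $\alpha$, and $\mathrm{disc}(T(S,\alpha))$ for \emph{all} $x\in D_d$ rather than only the very general ones, since the algebraic lattice can jump and one must be careful that the isotropic vector defining the Brauer lift still has divisibility dividing $d$ (and not merely dividing the discriminant of the possibly larger algebraic lattice). If this turns out to be genuinely subtle, the safe fallback is to prove the inequality only for very general $x\in D_d$ using the clean identity $\mathrm{disc}(\Gamma_d)=d$, which is all that is needed for the applications in the later sections; but I would first attempt the general statement via the transcendental-lattice argument sketched above, as the discriminant of $T(S,\alpha)$ is an isometry invariant of $\widetilde H(\ka_X,\ZZ)$ and hence computable purely from $d$.
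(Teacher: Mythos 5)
Your route is genuinely different from the paper's and, in the setting where the lemma lives (very general $x\in D_d$, so that $\widetilde H^{1,1}(S,\alpha,\ZZ)=\Gamma_d$ has discriminant $d$ by Lemma \ref{lem:dics11} — the paper's own proof also uses exactly this), it can be made to work. Your key lattice fact is correct: for a primitive isotropic $e$ in a lattice $M$ with ${\rm div}_M(e)=m$ one may complete $e$ to a basis with pairing vector $(m,0,\dots,0)$, and expanding the Gram determinant gives ${\rm disc}(M)=\pm m^2\cdot(\ldots)$, so $m^2\mid{\rm disc}(M)$. Combined with ${\rm ord}(\alpha)={\rm div}_{\Gamma_d}(e)$ for $e$ the generator of $H^4(S,\ZZ)$, this yields the claim. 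The paper argues on the transcendental side instead: $T(S,\alpha)={\rm Ker}(T(S)\to(1/\ell)\ZZ/\ZZ)$ has index $\ell={\rm ord}(\alpha)$ in $T(S)$, hence $|{\rm disc}(T(S,\alpha))|=\ell^2\,|{\rm disc}(T(S))|$, while ${\rm disc}(T(S,\alpha))=\pm\,{\rm disc}(\widetilde H^{1,1}(S,\alpha,\ZZ))=\pm d$. The two arguments are dual (the divisibility of the point class in the algebraic lattice mirrors the index of $T(S,\alpha)$ in $T(S)$); yours is more self-contained, the paper's quotes the kernel description of $T(S,\alpha)$ from \cite{HuyInt}.

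Two points in your write-up are wrong as stated and need repair. First, ${\rm ord}(\alpha)$ is \emph{not} the minimal $n$ with $n(1,B,B^2/2)\in\widetilde H(S,\ZZ)$: that $n$ must also make $nB^2/2$ integral, and even for $B$ chosen with $\ell B\in H^2(S,\ZZ)$ one only gets $\ell B^2/2\in(1/\ell)\ZZ$, so the $U(n)$ produced in the proof of Lemma \ref{lem:UUn} may have $n$ a proper multiple of $\ell$ (up to $\ell^2$); moreover $B$ may differ from such a choice by a class in ${\rm NS}(S)\otimes\QQ$. The statement you actually need — and which is true — is that ${\rm div}_N(e)=\ell$ for $N=\widetilde H^{1,1}(S,\alpha,\ZZ)$ and $e=(0,0,1)$: pairing with $e$ reads off the $H^0$-component, any integral class in $\exp(B)\cdot\widetilde H^{1,1}(S)$ with $H^0$-component $r$ forces $rB\in H^2(S,\ZZ)+{\rm NS}(S)\otimes\QQ$, i.e.\ $\ell\mid r$, and $r=\ell$ is realized by $\exp(B)(\ell,-\mu,s)$ once one writes $\ell B=\gamma+\mu$ with $\gamma$ integral and $\mu\in{\rm NS}(S)\otimes\QQ$. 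Second, your step 3 is both unnecessary (the lemma concerns the very general point of $D_d$) and incoherent as written: ${\rm div}_{\widetilde\Lambda}(e)=1$ for \emph{every} primitive $e$ in the unimodular lattice $\widetilde\Lambda$, so the clause ``${\rm div}_{\widetilde\Lambda}(e)=1$ is impossible unless $\alpha$ is trivial'' cannot be right; the divisibility that detects $\alpha$ must be computed in the algebraic lattice $\widetilde H^{1,1}(S,\alpha,\ZZ)$, not in $\widetilde\Lambda$. With the first point corrected and the third step deleted, your proof is a valid alternative to the one in the text.
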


\begin{proof}
Let $\ell\coloneqq {\rm ord}(\alpha)$. As proved in \cite{HuyInt}, the transcendental lattice
$T(S,\alpha)$ is isometric to the kernel of the natural map $T(S)\twoheadrightarrow (1/\ell)\ZZ/\ZZ$ defined by $\alpha$.
Hence,
$$|{\rm disc}(T(S,\alpha))|=|{\rm disc}(T(S))|\cdot {\rm ord}(\alpha)^2.$$
On the other hand, ${\rm disc}(T(S,\alpha))={\rm disc}(\widetilde H^{1,1}(S,\alpha,\ZZ))=d$ by Lemma 
\ref{lem:dics11}.
\end{proof}

Clearly, $d/{\rm ord}(\alpha)^2$ still satisfies ($\ast$$\ast'$) (but not necessarily ($\ast$$\ast$)). As
mentioned earlier, any $d$ satisfying ($\ast$$\ast'$) can be written (not always uniquely) as
$d=k^2\cdot d_0$ with $d_0$ satisfying ($\ast$$\ast$). For any such factorization one can
indeed choose $(S,\alpha)$ as above such that in addition ${\rm ord}(\alpha)=k$. In particular, then the untwisted
Hodge structure $\widetilde H(S,\ZZ)$ defines a point in $D_{d_0}$. This is best seen by starting with $D_{d_0}$ and then 
choosing globally a B-field which for the very general $S$ in $D_{d_0}$ defines a Brauer class of order $k$.

\subsection{} We will need to say a few things about the spherical locus $D_{\rm sph}$, as this will be crucial later.
\begin{ex}\label{ex:properincl}
i) Consider $d=24$ which obviously satisfies ($\ast$$\ast'$) but not ($\ast\ast$), i.e.\ $D_d\subset D_{\rm K3'}$ but $D_d\not\subset D_{\rm K3}$.
Also, $D_d\subset D_{\rm sph}$. Indeed, if $v$ generates $A_2^\perp\cap K_d$, then $(v)^2=-8$
and hence there exists $\delta\in A_2\oplus \ZZ\cdot v$ with $(\delta)^2=-2$, e.g.\ $2\lambda_1+\lambda_2+v$.
So, as mentioned before, one has a proper inclusion
$$D_{\K3}\subsetneq D_{\rm K3'}\cap D_{\rm sph}.$$

ii) Consider $d=12$ which does not satisfy   ($\ast$$\ast'$). So, $D_{12}\not\subset D_{\rm K3'}$, but
$D_{12}\subset D_{\rm sph}$. Indeed, in this case $v$ in (\ref{eqn:v^2}) satisfies $(v)^2=-4$ and, therefore,
$(\lambda_i+v)^2=-2$. Hence,
$$D_{\rm sph}\not\subset D_{\rm K3'}.$$
\end{ex}

 It would be interesting to find a numerical condition ($\dag$)
such that $D_{\rm sph}=\bigcup D_d$ with the union over all $d$ satisfying ($\dag$).
The best we have to offer at this time is the following

\begin{prop}\label{prop:Dsphnot}
Assume $D_d\subset D_{\rm K3'}$ and $9|d$. Then $D_d\not\subset D_{\rm sph}$.
\end{prop}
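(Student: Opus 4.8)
The plan is to prove the contrapositive-flavored statement directly: assuming $D_d \subset D_{\rm K3'}$ (so $d$ satisfies ($\ast\ast'$)) and $9 \mid d$, I want to exhibit a period $x \in D_d$ that does \emph{not} lie in $D_{\rm sph}$, i.e.\ for which the Hodge structure on $\widetilde\Lambda$ defined by $x$ carries no $(-2)$-class in its $(1,1)$-part. First I would pick $x \in D_d$ very general, so that $\widetilde H^{1,1}(\widetilde\Lambda, x)$ has rank exactly $3$ and equals the saturation $\Gamma_d$ of $A_2 \oplus \ZZ \cdot v$, where $v$ is the generator of $A_2^\perp \cap K_d$ as in Section~\ref{sec:cubiclat}. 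Then the question becomes purely lattice-theoretic: show that $\Gamma_d$ represents no $(-2)$-class.

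The key computation is to understand $\Gamma_d$ explicitly. Since $9 \mid d$ and $d$ is even and satisfies ($\ast\ast'$), write $d = 6n$ with $n = d/6$ (note $d \equiv 0\,(6)$ here because $d \equiv 0\,(2)$ together with $9 \mid d$ forces... actually one must check: ($\ast\ast'$) gives $d$ even, and the cases $d \equiv 0,2\,(6)$ both occur — but $9\mid d$ forces $3 \mid d$, hence $d \equiv 0\,(6)$). In that case $A_2 \oplus \ZZ\cdot v$ is already primitive in $\widetilde\Lambda$, so $\Gamma_d = A_2 \oplus \ZZ\cdot v$ with $(v)^2 = -d/3 = -2n$ where now $3 \mid n$ since $9 \mid d$. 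A general element is $\delta = w + k v$ with $w \in A_2$, $k \in \ZZ$, and $(\delta)^2 = (w)^2 - 2n k^2$. So I need: $(w)^2 - 2nk^2 = -2$ has no solution with $w \in A_2$, $k \in \ZZ$. Since values of $(w)^2$ on $A_2$ are exactly $\{2m : m = \prod p_i^{a_i},\ a_i \text{ even for } p_i \equiv 2\,(3)\} \cup \{0\}$ (the classical result quoted around~(\ref{eqn:class})), the equation becomes $2m - 2nk^2 = -2$, i.e.\ $m = nk^2 - 1$, and I must show $nk^2 - 1$ is never a nonnegative integer of the admissible form (or zero). Reducing mod $3$: since $3 \mid n$, we get $m = nk^2 - 1 \equiv -1 \equiv 2 \,(3)$. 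An integer $\equiv 2\,(3)$ must have some prime factor $p \equiv 2\,(3)$ appearing to an odd power (since the product of primes all $\equiv 1\,(3)$ or equal to $3$, with the $p\equiv 2\,(3)$ ones appearing to even powers, is $\equiv 0$ or $1 \,(3)$); hence $m$ is not of the admissible form, and $m = 0$ is excluded since $nk^2 - 1 = 0$ has no integer solution for $n \geq 3$. Therefore no such $\delta$ exists, and $x \notin D_{\rm sph}$.

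I would then assemble these pieces: the very general point argument reduces to the lattice statement, and the mod-$3$ argument finishes it. The main obstacle I anticipate is the bookkeeping around the two cases $d \equiv 0\,(6)$ versus $d \equiv 2\,(6)$ — but as noted, $9 \mid d$ forces $d \equiv 0\,(6)$, so only the primitive case arises and $\Gamma_d$ has the clean form $A_2 \oplus \ZZ \cdot v$. A secondary point to be careful about is justifying that for very general $x \in D_d$ the algebraic lattice is \emph{exactly} $\Gamma_d$ and not larger; this is the standard statement already invoked in Section~\ref{sec:DK3'codim} (the integral $(1,1)$-part of a very general period in a given hyperplane section is the saturation of the obvious rank-$3$ lattice), so I would simply cite that. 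Finally I note the hypothesis $D_d \subset D_{\rm K3'}$ is used only to know $d$ satisfies ($\ast\ast'$), which guarantees the characterization of the values $(w)^2$ on $A_2$ is the relevant one; in fact the argument shows more bluntly that whenever $9 \mid d$ and $d \equiv 0\,(6)$, the generic period of $D_d$ has no spherical class.

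\medskip

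\noindent\emph{Remark.} One could phrase the conclusion as: for $9 \mid d$ the lattice $\Gamma_d$ (equivalently $v^\perp \subset A_2^\perp$, which controls $T(S,\alpha)$) represents $-2$ if and only if... — but for the proposition only the one implication is needed.
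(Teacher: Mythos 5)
Your proposal is correct and follows essentially the same route as the paper: reduce to the lattice $\Gamma_d=A_2\oplus\ZZ\cdot v$ (primitive since $9\mid d$ forces $d\equiv 0\,(6)$), write a putative spherical class as $\delta=w+kv$, and derive the contradiction $-2=(w)^2-k^2d/3$ with $(w)^2=2m$, $m\equiv 2\,(3)$, against the classical characterization (\ref{eqn:class}) of the values represented by $A_2$. The only cosmetic difference is that you phrase the reduction via a very general period of $D_d$ having algebraic lattice exactly $\Gamma_d$, whereas the paper phrases it as a containment of hyperplane sections $D\cap v^\perp\subset D\cap\delta^\perp$; these are interchangeable, and your observation that the hypothesis $D_d\subset D_{\rm K3'}$ is not actually used in the argument is also consistent with the paper's proof.
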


\begin{proof} Consider a fixed $K_d$ and the corresponding generator $v$ of $K_d\cap A_2^\perp$. As
$9|d$, clearly $d\equiv 0\, (6)$ and so $A_2\oplus \ZZ\cdot v\subset\widetilde\Lambda$ is primitive. If there were a $(-2)$-class
$\delta\in\widetilde\Lambda$ with $D\cap K_d^\perp=D\cap v^\perp\subset D\cap\delta^\perp$, then $\delta\in A_2+\ZZ\cdot v$
and so $\delta =w+kv$ for some $w\in A_2$ and $k\in \ZZ$. But then $-2=(w)^2-k^2d/3$.
However, if $9|d$, then $k^2d/3\equiv0\,(3)$ and hence $(w)^2=2m$ with $m\equiv 2\,(3)$,
which contradicts (\ref{eqn:class}).
\end{proof}

The following immediate consequence is crucial for the proof of Theorem \ref{thm:noFMvg}, see Section \ref{sec:ProofAut}.
\begin{cor}\label{cor:locustw}
The locus of twisted K3 periods $D_{\rm K3'}$ contains infinitely many hyperplane sections $D_d$ with
$D_d\not\subset D_{\rm sph}$.\qed
\end{cor}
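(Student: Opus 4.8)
The corollary follows by combining Proposition \ref{prop:Dsphnot} with the structure of the set of $d$ satisfying ($\ast$$\ast'$), so the plan is to exhibit infinitely many such $d$ that are also divisible by $9$. Concretely, I would take $d=9\cdot 2\cdot m^2$ for $m$ ranging over positive integers all of whose prime factors are $\equiv 1\,(3)$ (or equal to $3$); since $d$ is even and $d/2 = 9 m^2$, every prime $p\equiv 2\,(3)$ occurs in the factorization of $d/2$ only through $m^2$ and hence to an even power, so ($\ast$$\ast'$) holds, while trivially $9\mid d$. There are infinitely many such $m$ (e.g.\ all powers of $7$, or all powers of a fixed prime $\equiv 1\,(3)$), giving infinitely many distinct $d$; alternatively one may simply take $d = 9\cdot 8 \cdot k^2 = 72 k^2$ for all $k$, since then $d/2 = 36k^2 = (6k)^2$ is a perfect square and ($\ast$$\ast'$) is immediate.

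For each such $d$ one has $D_d\subset D_{\rm K3'}$ by Proposition \ref{prop:DtwDd} (as $d$ satisfies ($\ast$$\ast'$)), and $D_d\not\subset D_{\rm sph}$ by Proposition \ref{prop:Dsphnot} (as $9\mid d$). Finally, to see that these contribute infinitely many \emph{distinct} hyperplane loci, one notes that the divisors $\kc_d = D_d/\tilde\OO(h^\perp)$ are pairwise distinct for distinct $d$, since $d$ is recovered as the discriminant of $K_d$; equivalently, a very general point of $D_d$ lies on no $D_{d'}$ with $d'\ne d$ because its Picard lattice determines $d$. Hence the $D_d$ are genuinely different (not all contained in one another), and we obtain infinitely many hyperplane sections $D_d\subset D_{\rm K3'}$ with $D_d\not\subset D_{\rm sph}$, as claimed.

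There is essentially no obstacle here — the content is entirely in the two cited propositions, and the only thing to check is the elementary arithmetic that ($\ast$$\ast'$) is compatible with divisibility by $9$, which is clear once one observes that ($\ast$$\ast'$) constrains only the exponents of primes $\equiv 2\,(3)$ and imposes nothing on the prime $3$. The mild point worth stating explicitly is why infinitely many such $d$ yield infinitely many genuinely distinct loci rather than a nested family; this is handled by the discriminant invariant as above.
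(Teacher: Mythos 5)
Your proof is correct and is exactly the argument the paper intends: the corollary is stated as an immediate consequence of Proposition \ref{prop:Dsphnot}, the point being that infinitely many $d$ (e.g.\ $d=18m^2$) satisfy ($\ast$$\ast'$) while being divisible by $9$, so Propositions \ref{prop:DtwDd} and \ref{prop:Dsphnot} apply. Your additional remark that distinct $d$ give genuinely distinct loci (via the discriminant) is a worthwhile detail the paper leaves implicit.
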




\subsection{}\label{sec:GenK3}
 In \cite{HuyInt} we have shown that points in $Q$ can be understood as periods of generalized K3 surfaces.
 It is useful to distinguish three types:\footnote{The discussion has been prompted by a question of Ben Bakker.}
\smallskip

{\bf i)} Periods of ordinary K3 surfaces are parametrized by $Q_{\rm K3}$.
Up to the action of ${\rm O}(\widetilde \Lambda)$, the set of these periods
is the intersection of $Q$ with the linear codimension two  subspace $\PP(\Lambda\otimes\CC)\subset\PP(\widetilde\Lambda\otimes\CC)$.
\smallskip

{\bf ii)} More generally, one can consider periods of the form $\sigma+B\wedge\sigma$, where $\sigma\in\Lambda\otimes \CC$ is an ordinary period and $B\in\Lambda\otimes\CC$ (but not necessarily $B\in\Lambda\otimes\QQ$). Up to the action of ${\rm O}(\widetilde\Lambda)$,
these periods are parametrized by the intersection of $Q$ with the linear 
subspace of codimension one $\PP((\Lambda\oplus  \ZZ\cdot f)\otimes\CC)\subset\PP(\widetilde\Lambda\otimes\CC)$.
Here, $f$ is viewed as the generator
of $H^4$.  Note that by definition $Q_{\rm K3'}$ is the subset of periods for which
$B$ can be chosen in $\Lambda\otimes\QQ$. 
\smallskip

{\bf iii)} Periods of the form $\exp(B+i\omega)=1+(B+i\omega)+((B^2-\omega^2)/2+(B.\omega)i)$
are geometrically interpreted as periods associated with complexified symplectic forms. Here, the first and third summands are
considered in $U\cong H^0\oplus H^4$. Periods of this type are parametrized by an open dense subset of $Q$.
\smallskip

In particular, all cubic periods parametrized by $D\subset Q$ should have an interpretation
in terms of these three types. This has been discussed above for type i) and has led to consider
the intersection $D_{\rm K3}=D\cap Q_{\rm K3}$. For type ii) with $B$ rational the intersection
with the cubic period domain gives $D_{\rm K3'}$. It is now natural to ask
whether the remaining periods, so the periods in $D\setminus D_{\rm K3'}$,
are of type ii) with $B$ not rational or rather of type iii), i.e.\ related to complexified symplectic forms.
It is the latter, as shown by the following

\begin{prop}\label{prop:genK3allD}
The Hodge structure of a cubic period $x\in D$ is Hodge isometric to the Hodge structure
of a twisted projective K3 surface $(S,\alpha)$, i.e.\ $x\in D_{
\rm K3'}$,
or to the Hodge structure associated with $\exp(B+i\omega)$.
Furthermore, if the Hodge structure of $x$ is Hodge isometric to a Hodge
structure of the type $\sigma+B\wedge\sigma$, then $B$ can be chosen rational.
\end{prop}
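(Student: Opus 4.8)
The statement has two parts. For the first, I want to show that a cubic period $x \in D$ either lies in $D_{\rm K3'}$ or gives a Hodge structure isometric to one of the form $\exp(B+i\omega)$; for the second, that if the Hodge structure is of the form $\sigma + B\wedge\sigma$ then $B$ may be chosen rational — which is just the observation that such a Hodge structure lies in $D_{\rm K3'}$ again (by definition, $Q_{\rm K3'}$ is exactly the locus where $B$ is rational, so a period of type ii) landing in $D$ is automatically in $D_{\rm K3'}$ unless it is genuinely irrational, and I will show the irrational case can only occur as type iii)).

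The plan is to work on $\widetilde\Lambda$ with the Hodge structure determined by $x$, and to exploit the one feature special to cubic periods that is \emph{not} available for a general point of $Q$: by construction $A_2 \subset \widetilde\Lambda$ is contained in the $(1,1)$-part. Pick any primitive isotropic vector; the natural candidate is to produce one using the $A_2$-summand. Following the idea in the proof of Proposition \ref{prop:DtwK3De}, I would look at the $(1,1)$-lattice $N \coloneqq \widetilde H^{1,1}(x) \supseteq A_2$. If $N$ contains an isotropic vector $e \ne 0$, then by Lemma \ref{lem:UUn}(i) (after choosing $e$ primitive and completing to a hyperbolic plane as in \eqref{eqn:decomp}) one gets $x \in Q_{\rm K3}\subset Q_{\rm K3'}$, hence $x\in D_{\rm K3'}$; and chasing through the argument of Lemma \ref{lem:UUn} with the rational $B=-(1/n)\gamma$ produced there shows the presentation $\sigma+B\wedge\sigma$ with $B$ rational. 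So the whole point reduces to the case where $N$ is negative definite away from the $A_2$ it contains — equivalently, where $N$ has signature $(2,k)$ but contains \emph{no} isotropic class. In that case I claim the transcendental lattice $T \coloneqq N^\perp$ (signature $(2, 20-\rk N + 2)$) carries a weight-two Hodge structure of K3 type, and by surjectivity of the period map for the K3 lattice $\Lambda$ one realizes $T$ (or $T$ together with a copy of $U$) inside $\widetilde H(S,\ZZ)$ for a K3 surface $S$; the remaining rank-two piece of $N$ complementary to $A_2$, being negative definite of rank $\le 1$... — wait, more carefully: $N\supseteq A_2$ with $A_2$ positive definite rank two already accounting for both positive directions, so $N/A_2$-part is negative semidefinite; if $N$ has no isotropic vector, $N$ is the orthogonal-ish sum $A_2 \oplus (\text{neg.\ def.})$ up to finite index, and the picture of Proposition \ref{prop:genK3allD}'s part ii)/iii) dichotomy in Section \ref{sec:GenK3} applies: a period not in $Q_{\rm K3'}$ is of type ii) with irrational $B$ or of type iii). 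The real content is to rule out type ii) with irrational $B$ for cubic periods — i.e., to show that if $x$ is of the form $\sigma + B\wedge\sigma$ at all, then one can \emph{re-choose} representatives so that $B\in\Lambda\otimes\QQ$, using that $A_2$ (hence in particular nonzero isotropic-friendly configurations, e.g.\ $(-2)$-classes in $A_2$) sits in $\widetilde H^{1,1}$.

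Concretely, for the second assertion I would argue: suppose the Hodge structure of $x$ is $\cong \widetilde H$ with $(2,0)$-part spanned by $\sigma + (B.\sigma)f$, $\sigma\in\Lambda\otimes\CC$ an ordinary K3 period, $f$ the $H^4$-generator, $B\in\Lambda\otimes\CC$. The $(1,1)$-lattice then contains $f$ (isotropic!), so by the first part $x\in D_{\rm K3'}$, and $D_{\rm K3'}=D\cap\bigcup e^\perp$ with $e$ isotropic in $\widetilde\Lambda$, \emph{integral}. Re-running Lemma \ref{lem:UUn}'s converse construction starting from this integral isotropic $e$ (namely $e=f$, already primitive) produces a \emph{rational} $B' = -(1/n)\gamma$ with the same Hodge structure — this is exactly the paragraph "Now set $B:=-(1/n)\gamma$" there. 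That gives the rationality. For the first assertion, the dichotomy is then: either $\widetilde H^{1,1}(x)$ contains an isotropic vector — then $x\in D_{\rm K3}\subset D_{\rm K3'}$ by Lemma \ref{lem:UUn} — or it does not, and then $x$ is not of type i) or ii) (both of which force an isotropic $(1,1)$-class, namely the image of $H^4$), so by the trichotomy of Section \ref{sec:GenK3} it must be of type iii), i.e.\ its Hodge structure is isometric to that of $\exp(B+i\omega)$. I should double-check that the type iii) periods are precisely the complement of the type i)+ii) locus, which is the content of "parametrized by an open dense subset of $Q$" together with the fact that $\exp(B+i\omega)$ has $(2,0)$-part with nonzero $H^0$-component and hence \emph{no} isotropic $(1,1)$-vector of the special form — but actually I only need the easy direction: if there is no isotropic $(1,1)$-class then the period is not of type i) or ii), and every period of $Q$ is of one of the three types by \cite{HuyInt}.

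The main obstacle I expect is the clean statement and verification that "no isotropic class in $\widetilde H^{1,1}$" is equivalent to "not of type i) or ii)": one direction (types i), ii) $\Rightarrow$ isotropic $(1,1)$-class, namely $f\in H^4$) is immediate from the descriptions in Section \ref{sec:GenK3}; the other direction is what makes the dichotomy exhaustive and relies on quoting the generalized-K3 period description of \cite{HuyInt} that every $x\in Q$ is a period of some generalized K3 surface of type i), ii), or iii). Modulo that input, the argument is a short combination of Lemma \ref{lem:UUn}, Proposition \ref{prop:DtwK3De}, and the trichotomy, with no heavy lattice computation; the rationality clause is essentially a citation of the construction already carried out inside the proof of Lemma \ref{lem:UUn}.
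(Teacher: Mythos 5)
Your overall route is the same as the paper's: characterize periods of the form $\sigma+B\wedge\sigma$ by the presence of an integral isotropic class in the $(1,1)$-part, use the copy of $A_2$ inside $\widetilde H^{1,1}$ that is special to cubic periods to upgrade such a class to membership in $D_{\rm K3'}$ via Proposition \ref{prop:DtwK3De}, and dispose of the remaining periods by the type i)/ii)/iii) trichotomy of Section \ref{sec:GenK3}. The rationality clause is then handled exactly as in the paper: once $x\in D_{\rm K3'}$, the Hodge structure is by definition that of a twisted K3 surface, which is of type ii) with rational $B$.

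However, one step as you state it is wrong, and you state it twice: that an isotropic vector $e$ in the $(1,1)$-part yields $x\in Q_{\rm K3}\subset Q_{\rm K3'}$ ``by Lemma \ref{lem:UUn}(i), after completing $e$ to a hyperbolic plane as in (\ref{eqn:decomp})''. Completing a primitive isotropic $e$ to a direct summand $U=\langle e,f\rangle$ of $\widetilde\Lambda$ says nothing about the Hodge type of $f$; in general $f$ is not a $(1,1)$-class, so Lemma \ref{lem:UUn}(i) does not apply and you cannot conclude $x\in Q_{\rm K3}$. Indeed the conclusion is false: a very general $x\in D_8$ has an integral isotropic $(1,1)$-class (as $8$ satisfies ($\ast\ast'$)) but lies outside $D_{\rm K3}$ (as $8$ fails ($\ast\ast$)); your claim would force $D_{\rm K3}=D_{\rm K3'}$. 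What a single isotropic $(1,1)$-class gives you --- and all the proposition needs --- is $x\in D_{\rm K3'}$, via Proposition \ref{prop:DtwK3De}, whose proof uses the second positive direction supplied by $A_2$ to manufacture a second isotropic $(1,1)$-vector pairing nontrivially with $e$, hence a $U(n)$ of type $(1,1)$, and then applies Lemma \ref{lem:UUn}(ii). Since you do invoke Proposition \ref{prop:DtwK3De} correctly in your rationality paragraph, the repair is local: delete the appeals to Lemma \ref{lem:UUn}(i) and route both occurrences through Proposition \ref{prop:DtwK3De}. With that correction your argument coincides with the paper's proof.
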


\begin{proof} One first observes that, analogously to Lemma \ref{lem:UUn}, ii),
a period $x\in Q$ is of the type ii) if and only if the integral
$(1,1)$-part of the Hodge structure associated with $x$ contains an isotropic direction.
Indeed, if $x$ is of type ii), i.e.\ of the form $\sigma+B\wedge \sigma$,
then $H^4$ provides an isotropic direction of type $(1,1)$. For the
converse use that any isotropic direction can be completed to a hyperbolic
plane $U$ as a direct summand of $\widetilde\Lambda$.
Now regard $U$  as $H^0\oplus H^4$
with $H^4$ as the given isotropic direction, which is of type $(1,1)$. Hence,  $x$ is indeed of type ii).

Now let $x\in D\cap Q$ be of type ii). It is enough to show that then $x\in D_{\rm K3'}$.
The integral $(1,1)$-part of the Hodge structure associated with $x$ contains
$A_2$ and an isotropic direction,  say $\ZZ\cdot f$.  Then conclude by Proposition \ref{prop:DtwK3De}.
%
%
%
\end{proof}

Note that both subsets, $$D\subset Q\text{ and }Q_{\rm K3}\subset Q,$$ are of codimension two and that they both
parametrize periods that can be interpreted in complex geometric terms (in contrast to
the `symplectic periods' of the form $\exp(B+i\omega)$).  In fact, periods
in $D$ are even algebro-geometric in the sense that essentially all of them are associated with
cubic fourfolds $X\subset\PP^5$, whereas most K3 surfaces are of course not projective.

In  categorical language one would want to interpret the inclusion $D\subset Q$
for points in the complement of $D_{\rm K3'}$ as saying that
the cubic K3 category $\ka_X$ associated with the cubic fourfold $X\subset\PP^5$ corresponding
to $x\in D\setminus D_{\rm K3'}$ is equivalent to the derived Fukaya category
${\rm D}{\rm Fuk}(B+i\omega)$ associated with a complexified symplectic form $B+i\omega$.
Deciding which symplectic structures occur here is in principle possible, but establishing
an equivalence $$\ka_X\cong {\rm D}{\rm Fuk}(B+i\omega)$$ will be difficult even in special cases.

The categorical interpretation of $D_{\rm K3}\subset Q$ is the content of \cite{AT}, where it is proved that
at least for a Zariski open dense set of periods $x\in D_{\rm K3}$ the cubic K3 category $\ka_X$ really is equivalent to
$\Db(S)$ of the K3 surface $S$ realizing the Hodge structure associated with $x$.
This paper deals with the categorical interpretation of $D_{\rm K3'}\subset Q$.

\begin{remark}
The period domain $Q\subset \PP(\widetilde\Lambda\otimes\CC)$ contains $D\subset Q$ as a codimension
two subset, but it also contains natural codimension one subspaces. For example, for a K3 surface $S$ and
the Mukai vector $v=(1,0,1-n)\in\widetilde H^{1,1}(S,\ZZ)$ the hyperplane section
$Q\cap v^\perp$ can be seen as the period domain for deformations of the Hilbert scheme $S^{[n]}$. Note, however, that
from a categorical point of view the situation is different, even when one restricts to the codimension two
part that corresponds to projective deformations of the Hilbert scheme. In \cite{MaMe} it is explained how
the non-full subcategory $\Db(S)\subset \Db(S^{[n]})$ deforms sideways.
\end{remark}
 

\section{The cubic K3 category}\label{sec:AX}

Let $X\subset\PP^5$ be a smooth cubic hypersurface. The \emph{cubic K3 category}
associated with $X$ is the category
$$\ka_X\coloneqq\langle\ko_X,\ko_X(1),\ko_X(2)\rangle^\perp\coloneqq\{E\in\Db(X)~|~\Hom(\ko_X(i),E[\ast])=0\text{ for }i=0,1,2\}.$$
The category has first been studied by Kuznetsov in \cite{Kuz1}, see also the more recent \cite{Kuznew}. It
behaves in many respects like the derived category $\Db(S)$ of a K3 surface $S$.
In particular, the double shift $E\mapsto E[2]$ defines a Serre functor  of $\ka_X$ (see \cite{Kuz2}, \cite[Cor.\ 4.3]{Kuz3} and \cite[Rem. 4.2]{KM2}) and the dimension of  Hochschild homology of $\ka_X$ and of $\Db(S)$ coincide, see \cite{Kuz1,Kuz2}.

\begin{ex}\label{ex:kuz} Due to the work of Kuznetsov \cite{Kuz1,Kuz5}, certain cubic K3 categories
$\ka_X$ are known to be equivalent to  bounded derived categories
$\Db(S,\alpha)$ of  twisted K3 surfaces $(S,\alpha)$. For example, if the period $x\in D$ of a cubic $X$
is contained in $D_8$, then $X$ contains a plane and for generic choices there exists a twisted K3 surface $(S,\alpha)$
with $\ka_X\cong \Db(S,\alpha)$. Similarly, if  $X$ is a Pfaffian cubic and hence $x\in D_{14}$, then $\ka_X\cong\Db(S)$ for
the K3 surface $S$  naturally associated with the Pfaffian $X$.
\end{ex}

\begin{remark}
Despite the almost perfect analogy between the cubic K3 category $\ka_X$ and the derived category $\Db(S)$ of
K3 surfaces, certain fundamental issues are more difficult for $\ka_X$.  For example, to the best of my knowledge
no $\ka_X$, which is not equivalent to the derived category $\Db(S,\alpha)$ of some twisted K3 surface $(S,\alpha)$,
has yet been endowed with a bounded t-structure, let alone a stability condition.
See \cite{Toda1, Toda2} for a discussion of special stability conditions on certain $\ka_X$ of the form $\Db(S,\alpha)$.
\end{remark}

The semi-orthogonal decomposition $\Db(X)=\langle\ka_X,{} ^\perp\!\ka_X\rangle$
with ${}^\perp\!\ka_X=\langle\ko_X,\ko_X(1),\ko_X(2)\rangle$ comes with
the full embedding $i_*\colon\ka_X\,\hookrightarrow \Db(X)$ (which is often suppressed in the notation)
and the left and right adjoint
functors $i^*,i^!\colon\Db(X)\to\ka_X$, see \cite[Sec.\ 3]{Kuz2} for a survey. 

\subsection{} For a K3 surface $S$ the Mukai lattice $\widetilde H(S,\ZZ)$ is endowed
with the Hodge structure determined by $\widetilde H^{2,0}(S)=H^{2,0}(S)$ and 
by requiring  $\widetilde H^{2,0}\perp\widetilde H^{1,1}$.
Using the natural isomorphism $K_{\rm top}(S)\cong H^*(S,\ZZ)$ this Hodge structure
can also be regarded as a Hodge structure on $K_{\rm top}(S)$.

In \cite{AT} Addington and Thomas introduce a similar Hodge structure associated with the category $\ka_X$,
defined on $K_{\rm top}(\ka_X)$ and denoted by
$\widetilde H(\ka_X,\ZZ)$. Here, $K_{\rm top}(\ka_X)\subset K_{\rm top}(X)$ is the orthogonal complement
of $\{[\ko], [\ko(1)], [\ko(2)]\}$ with respect to the pairing $\chi(\alpha,\beta)=
\langle v(\alpha),v(\beta)\rangle$ defined in terms of the Mukai vector $v\colon K_{\rm top}(X)\to H^*(X,\QQ)$
and the Mukai pairing on $H^*(X,\QQ)$. It is not difficult to see that one has in fact a semi-orthogonal direct
sum decomposition $$K_{\rm top}(X)=K_{\rm top}(\ka_X)\oplus\langle[\ko_X],[\ko_X(1)],[\ko_X(2)]\rangle.$$
As $H^*(X,\ZZ)$ is torsion free, $K_{\rm top}(X)$ and
$$\widetilde H(\ka_X,\ZZ)\coloneqq K_{\rm top}(\ka_X)$$ are as well. 
The Hodge structure is then defined by $\widetilde H^{2,0}(\ka_X)\coloneqq v^{-1}(H^{3,1}(X))$
and the condition $\widetilde H^{2,0}\perp\widetilde H^{1,1}$.
Furthermore, $N(\ka_X)$ and the transcendental lattice $T(\ka_X)$ of $\ka_X$
are introduced in terms of this Hodge structure
as $\widetilde H^{1,1}(\ka_X,\ZZ)$ and its orthogonal complement $\widetilde H^{1,1}(\ka_X,\ZZ)^\perp$, respectively.
As a lattice $\widetilde H(\ka_X,\ZZ)$ is independent of $X$ and by \cite{AT} any equi\-valence $\ka_X\cong\Db(S)$ (see  Example \ref{ex:kuz})
induces a Hodge isometry $\widetilde H(\ka_X,\ZZ)\cong\widetilde H(S,\ZZ)$
(cf.\ Proposition \ref{prop:indHodge}). In particular, $\widetilde H(\ka_X,\ZZ)$ for all smooth cubics is
abstractly isomorphic to $\widetilde\Lambda$.


As explained in \cite[Prop.\ 2.3]{AT}, the classes $\lambda_j\coloneqq [i^*\ko_\ell(j)]\in \widetilde H^{1,1}(\ka_X,\ZZ)$, for a line
$\ell\subset X$ and $j=1,2$,
can be viewed as the standard generators of a lattice $A_2\subset\widetilde H^{1,1}(\ka_X,\ZZ)$. Moreover, the Mukai
vector $v\colon \widetilde H(\ka_X,\ZZ)=K_{\rm top}(\ka_X)\to H^*(X,\QQ)$ induces an isometry (up to sign)
$$\langle\lambda_1,\lambda_2\rangle^\perp\congpf h^\perp=H^4(X,\ZZ)_{\rm prim}.$$
In particular, any marking $\varphi\colon h^\perp\congpf A_2^\perp$ induces a marking
$\langle\lambda_1,\lambda_2\rangle\oplus\langle\lambda_1,\lambda_2\rangle^\perp\congpf A_2\oplus A_2^\perp$
and further  a marking
\begin{equation}\label{eqn:markingA}
\widetilde H(\ka_X,\ZZ)\congpf\widetilde\Lambda.
\end{equation}

Conversely, any marking (\ref{eqn:markingA}) inducing  the standard identification $\langle\lambda_1,\lambda_2\rangle\congpf A_2$ yields
a marking $H^4(X,\ZZ)_{\rm prim}\congpf A_2^\perp$. In this sense, (an open set of) points $x\in D$ will be considered as periods of
cubic K3 categories $\ka_X$ via their Hodge structures $\widetilde H(\ka_X,\ZZ)$.

Note that the positive directions of $\widetilde H(\ka_X,\ZZ)$ come with a natural orientation, given by the real and imaginary
parts of $\widetilde H^{2,0}(\ka_X)$ and the oriented basis $\lambda_1,\lambda_2$ of $A_2\subset\widetilde H^{1,1}(\ka_X,\ZZ)$.

\subsection{} As we are also interested in equivalences  $\Db(S,\alpha)\congpf\ka_X$, we collect a few relevant facts dealing with
the topological K-theory of twisted K3 surfaces $(S,\alpha)$. As it turns out, the topological setting
does not require any substantially new arguments. In order to speak of twisted sheaves or bundles, let us fix  a class  $B\in H^2(S,\QQ)$ which  under the exponential map
$H^2(S,\QQ)\to H^2(S,\ko_S^*)$ is mapped to $\alpha$. Next choose a \v{C}ech representative $\{B_{ijk}\}$ of $B\in H^2(S,\QQ)$
and consider the associated \v{C}ech representative  $\{\alpha_{ijk}\coloneqq \exp(B_{ijk})\}$ of $\alpha$.
This allows one to speak of $\{\alpha_{ijk}\}$-twisted sheaves and bundles, in the holomorphic as well as in the topological setting.

As explained in \cite[Prop.\ 1.2]{HuSt}, any $\{\alpha_{ijk}\}$-twisted bundle $E$
can be `untwisted' to a bundle $E_B$ by changing the transition functions $\varphi_{ij}$ of $E$ to $\exp(a_{ij})\cdot\varphi_{ij}$, where the continuous functions $a_{ij}$ satisfy $-a_{ij}+a_{ik}-a_{jk}=B_{ijk}$. The process can be reversed and so
the categories of  $\{\alpha_{ijk}\}$-twisted topological bundles is equivalent to the category of untwisted topological bundles. In
particular, $$K_{\rm top}(S,\alpha)\cong K_{\rm top}(S)$$ which composed with the Mukai vector
yields an isomorphism $K_{\rm top}(S,\alpha)\cong \widetilde H(S,\alpha,\ZZ)$ that identifies
the image of $K(S,\alpha)\to K_{\rm top}(S,\alpha)$ with $\widetilde H^{1,1}(S,\alpha,\ZZ)$.
\smallskip

The next result is the twisted version of the observation by Addington and Thomas mentioned earlier.
\begin{prop}\label{prop:indHodge}
Any linear, exact equivalence $\ka_X\cong \Db(S,\alpha)$ induces
a Hodge isometry
$$\widetilde H(\ka_X,\ZZ)\cong\widetilde H(S,\alpha,\ZZ).$$
\end{prop}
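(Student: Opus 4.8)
The plan is to follow the same strategy Addington and Thomas used in the untwisted case \cite{AT}, adapting each step to incorporate a $B$-field. Fix a $B$-field $B\in H^2(S,\QQ)$ lifting $\alpha$ and the associated \v{C}ech cocycle $\{\alpha_{ijk}\}$, so that one may speak of $\{\alpha_{ijk}\}$-twisted sheaves and of the identification $K_{\rm top}(S,\alpha)\cong K_{\rm top}(S)\cong\widetilde H(S,\alpha,\ZZ)$ recalled just above. First I would observe that any linear exact equivalence $\Phi\colon\Db(S,\alpha)\isomor\ka_X$, being a Fourier--Mukai transform (by hypothesis we are in the FM setting, or by the kernel-existence results cited for twisted K3 surfaces), induces an isomorphism on topological K-theory $\Phi^{\rm top}_*\colon K_{\rm top}(S,\alpha)\isomor K_{\rm top}(\ka_X)=\widetilde H(\ka_X,\ZZ)$. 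The two things to check are that this map is (i) an isometry for the respective Mukai pairings and (ii) compatible with the Hodge structures, i.e.\ sends $\widetilde H^{2,0}(S,\alpha)=\CC\cdot(\sigma+B\wedge\sigma)$ to $\widetilde H^{2,0}(\ka_X)=v^{-1}(H^{3,1}(X))$.

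For (i), the Mukai pairing on $K_{\rm top}(\ka_X)$ is the restriction of the Euler pairing on $K_{\rm top}(X)$ (up to the sign coming from the Serre functor being $[2]$), and likewise on $K_{\rm top}(S,\alpha)$ the pairing is $-\chi$. An exact equivalence of triangulated categories preserves $\Hom$ and $\Ext$ spaces, hence preserves Euler characteristics, so $\Phi_*$ automatically preserves these pairings; the matching of signs is exactly the statement that both categories have Serre functor $[2]$, which is recalled in the excerpt for $\ka_X$ and is classical for $\Db(S,\alpha)$. For (ii), I would use that a Fourier--Mukai kernel $\kp$ on $S\times X$ (suitably twisted) induces a map on Hodge structures via its Mukai vector / Chern character, and that this cohomological FM transform carries the Hodge filtration of $\widetilde H(S,\alpha,\ZZ)$ to that of $\widetilde H(\ka_X,\ZZ)$ because $\Phi$ takes the point-like or structure-sheaf-type generators witnessing $H^{2,0}$ to the corresponding generators on the cubic side; concretely, $\Phi$ being $\CC$-linear and exact forces $\Phi_*$ to commute with the action of $H^{*,*}$-degree, and the $(2,0)$-part is intrinsically characterized (inside $\widetilde H^{1,1\perp}$) so it must be preserved.

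The cleanest way to organize (ii), and the one I would actually write, is to reduce to the untwisted statement of \cite{AT} by the untwisting trick from \cite[Prop.\ 1.2]{HuSt}: the equivalence $\Db(S,\alpha)\cong\ka_X$ can be composed, after a deformation or directly on K-theory, with the $B$-field twist $\exp(B)$ which is precisely the isometry $\widetilde H(S,\ZZ)\to\widetilde H(S,\alpha,\ZZ)$ sending $H^{2,0}(S)$ to $\widetilde H^{2,0}(S,\alpha)$. Then the Hodge-compatibility for the twisted equivalence follows formally from the untwisted case together with the explicit description of the twisted Hodge structure as $\exp(B)\cdot\widetilde H(S,\ZZ)$. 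The main obstacle I expect is purely bookkeeping: making sure the topological FM transform on K-theory is well defined for twisted kernels and that the Mukai vector intertwines it correctly with the cohomological transform, i.e.\ the twisted analogue of the Grothendieck--Riemann--Roch compatibility; once that is in place — and the excerpt has already set up $K_{\rm top}(S,\alpha)\cong K_{\rm top}(S)$ precisely to make it available — the Hodge part is essentially formal and the isometry part is immediate from exactness. I do not anticipate needing any stability or t-structure input here, in contrast to the deformation arguments used elsewhere in the paper.
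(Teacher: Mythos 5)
Your skeleton coincides with the paper's proof: the kernel is produced by Canonaco--Stellari \cite{CanSte} (Orlov in the untwisted case), it induces a map $K_{\rm top}(S,\alpha)\to K_{\rm top}(X)$ landing in $K_{\rm top}(\ka_X)$, the pairing is handled by the standard Mukai-pairing argument, and the Hodge compatibility must come from the kernel. The gap is in how you justify that last step. The paper's proof rests on one precise fact: the twisted Chern character ${\rm ch}^{-\alpha\boxtimes 1}(\ke)$ of the kernel is of Hodge type on the product, so that convolution with it preserves the grading by $p-q$ and in particular sends $\widetilde H^{2,0}(S,\alpha)$ into $\widetilde H^{2,0}(\ka_X)$. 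None of your substitutes establishes this. The claim that ``$\Phi_*$ commutes with the $H^{*,*}$-degree'' is false for cohomological FM transforms (they mix cohomological degrees; only $p-q$ survives, and only because the kernel is of Hodge type), and there are no objects of the category ``witnessing'' $H^{2,0}$. The ``intrinsic characterization'' of the $(2,0)$-part as the orthogonal complement of the $(1,1)$-part only pins down the plane $H^{2,0}\oplus H^{0,2}$: a real isometry preserving this plane may swap its two isotropic lines (e.g.\ the complexification of a reflection of the positive $2$-plane), so you cannot conclude $H^{2,0}\mapsto H^{2,0}$ this way. (Even preservation of the rational $(1,1)$-part is not free: it needs that rational Hodge classes of degree four on the cubic are algebraic.)

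Your proposed ``cleanest'' route --- reducing to the untwisted statement of \cite{AT} by composing with $\exp(B)$ --- does not work. The map $\exp(B)$ is not induced by any equivalence $\Db(S)\cong\Db(S,\alpha)$ (for nontrivial $\alpha$ these categories are in general inequivalent), and the untwisting of \cite[Prop.\ 1.2]{HuSt} is an equivalence of categories of \emph{topological} twisted bundles: it identifies the lattices $K_{\rm top}(S,\alpha)\cong K_{\rm top}(S)$ but not the Hodge structures, which differ precisely by $\exp(B)$. The Hodge-type statement for the twisted Chern character of a holomorphic twisted kernel is exactly the new content relative to \cite{AT}; it must be proved or cited (from \cite{HuSt}) directly and does not follow formally from the untwisted case. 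The isometry part of your argument is essentially fine once one notes that the Euler-characteristic argument covers only algebraic classes and that the extension to all of $K_{\rm top}$ is the Grothendieck--Riemann--Roch-type computation of \cite[Sec.\ 5.2]{HuyFM}, which you do flag.
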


\begin{proof}
By results due to Orlov in the untwisted case and to Canonaco and Stellari \cite{CanSte} in the twisted case,
 any fully faithful functor $\Phi\colon\Db(S,\alpha)\to\Db(X)$ is of Fourier--Mukai
type, i.e.\ $\Phi\cong\Phi_\ke$ for some $\ke\in\Db(S\times X,\alpha^{-1}\boxtimes1)$.
Therefore, $\Phi$ induces a homomorphism
$\Phi^K_\ke\colon K_{\rm top}(S,\alpha)\to K_{\rm top}(X)$, see \cite[Rem.\ 3.4]{HivdB}.

If $\Phi$ is induced by an equivalence $\Db(S,\alpha)\congpf\ka_X$, then
$\Phi^K_\ke\colon K_{\rm top}(S,\alpha)\congpf K_{\rm top}(\ka_X)$ is an isomorphism
and in fact a Hodge isometry $\widetilde H(S,\alpha,\ZZ)\congpf \widetilde H(\ka_X,\ZZ)$.
The compatibility with the Hodge structure follows from the twisted Chern character
${\rm ch}^{-\alpha\boxtimes 1}(\ke)$ of the Mukai kernel being  of Hodge type.
See \cite[Sec.\ 1]{HuSt} for the notion of twisted Chern characters.
That the quadratic form is respected as well is proved by mimicking the argument for  FM-equivalences, see e.g.\ \cite[Sec.\ 5.2]{HuyFM}.

(We are suppressing a number of technical details here. As explained before,
the actual realization of the  Hodge structure $\widetilde H(S,\alpha,\ZZ)$ depends on the choice of a $B\in H^2(S,\QQ)$
lifting $\alpha$. Similarly, the Chern character ${\rm ch}^{-\alpha\boxtimes 1}(\ke)$ also actually depends on $B$.)
\end{proof}

\subsection{}  The above result generalizes to \emph{FM-equivalences} $\ka_X\congpf\ka_{X'}$, i.e.\ to equivalences
for which the composition $\Db(X)\to\ka_X\congpf\ka_{X'}\to\Db(X')$ admits  a Fourier--Mukai kernel. It has been conjectured that in
fact any linear exact equivalence is a FM-equivalence, but the existing results do not cover our case.

\begin{prop}\label{prop:equivHodgeA}
Any FM-equivalence $\ka_X\congpf\ka_{X'}$ induces a Hodge isometry
$$\widetilde H(\ka_X,\ZZ)\congpf\widetilde H(\ka_{X'},\ZZ).$$
\end{prop}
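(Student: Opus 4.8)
The plan is to mimic the proof of Proposition \ref{prop:indHodge}, replacing the twisted K3 surface $(S,\alpha)$ on one side by the second cubic $X'$, and checking that the only extra ingredient needed is the fact that the composite functor is of Fourier--Mukai type, which is precisely the hypothesis built into the notion of a FM-equivalence. Concretely, let $\Phi\colon\ka_X\congpf\ka_{X'}$ be a FM-equivalence, so that the composition $\Psi\colon\Db(X)\xrightarrow{i^*}\ka_X\xrightarrow{\Phi}\ka_{X'}\hookrightarrow\Db(X')$ is a Fourier--Mukai transform $\Phi_\ke$ with kernel $\ke\in\Db(X\times X')$. First I would pass to topological K-theory: the kernel $\ke$ induces a homomorphism $\Psi^K\colon K_{\rm top}(X)\to K_{\rm top}(X')$ (as in \cite[Rem.\ 3.4]{HivdB}), and since $\ch(\ke)\in H^*(X\times X',\QQ)$ is of Hodge type, $\Psi^K$ is compatible with the weight-two Hodge structures coming from the Mukai vectors, and it respects the Mukai pairing by the standard Mukai-vector bookkeeping argument (cf.\ \cite[Sec.\ 5.2]{HuyFM}).

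Next I would restrict $\Psi^K$ to the summand $\widetilde H(\ka_X,\ZZ)=K_{\rm top}(\ka_X)\subset K_{\rm top}(X)$. Because $\Phi$ is an equivalence $\ka_X\congpf\ka_{X'}$ and $i^*$, $i'_*$ are the structure functors of the semi-orthogonal decompositions, the image of $K_{\rm top}(\ka_X)$ under $\Psi^K$ lands in $K_{\rm top}(\ka_{X'})=\widetilde H(\ka_{X'},\ZZ)$; more precisely, on the subcategory $\ka_X$ the functor $\Psi$ agrees (up to the embedding) with $\Phi$ itself, since $i^*\comp i_*=\id_{\ka_X}$. One then checks that the restriction $\widetilde H(\ka_X,\ZZ)\to\widetilde H(\ka_{X'},\ZZ)$ is bijective by producing an inverse from the FM-kernel of the quasi-inverse equivalence $\Phi^{-1}$ (which is again a FM-equivalence, since the class of FM-functors is closed under inversion in this setting), and that it respects the Mukai pairing and the Hodge filtration by what was said above. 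Hence it is the desired Hodge isometry $\widetilde H(\ka_X,\ZZ)\congpf\widetilde H(\ka_{X'},\ZZ)$.

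The only genuinely delicate point is the bookkeeping needed to see that $\Psi^K$ really does carry $K_{\rm top}(\ka_X)$ isometrically \emph{onto} $K_{\rm top}(\ka_{X'})$, rather than merely into $K_{\rm top}(X')$ with the correct Hodge/metric properties. One has to use that the objects $[\ko_X(i)]$ spanning ${}^\perp\!\ka_X$ are sent by $\Psi$ to zero (because $i^*$ kills them) and, conversely, that the quasi-inverse sends $[\ko_{X'}(i)]$ to zero, so that the two orthogonal decompositions $K_{\rm top}(X)=K_{\rm top}(\ka_X)\oplus\langle[\ko_X(i)]\rangle$ and the analogue for $X'$ are matched up; combined with $\Psi^K$ being a Mukai isometry of the full $K_{\rm top}$, this forces the restriction to $K_{\rm top}(\ka_X)$ to be an isometry onto $K_{\rm top}(\ka_{X'})$. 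This is where I expect the main (though still routine) effort to lie; everything else is a direct transcription of the untwisted argument of \cite{AT} and of Proposition \ref{prop:indHodge}.
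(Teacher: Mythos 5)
Your proposal is correct and follows essentially the same route as the paper, which simply states that the argument is an easy modification of the proof of Proposition \ref{prop:indHodge}: pass to the FM-kernel of the composition $\Db(X)\to\ka_X\congpf\ka_{X'}\hookrightarrow\Db(X')$ (here guaranteed by hypothesis rather than by Canonaco--Stellari), use the induced map on topological K-theory, and check compatibility with the Hodge structure and the Mukai pairing via the kernel's Chern character. The bookkeeping you flag at the end (matching the two semi-orthogonal decompositions of $K_{\rm top}$ and inverting via the quasi-inverse) is exactly the routine verification the paper leaves implicit.
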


\begin{proof} The argument is an easy modification of the above.
\end{proof}

The following improves upon a result in \cite[Prop.\ 6.3]{BMMS} where it is shown that
for a cubic $X\in\kc_8,$ so containing a plane, there exist at most finitely
many (up to isomorphism) cubics $X_1,\ldots,X_n\in\kc_8$ with $\ka_X\cong\ka_{X_1}\cong\ldots\cong\ka_{X_n}$.

\begin{cor}\label{cor:finiteFM}
For any given smooth cubic $X\subset\PP^5$ there exist up to isomorphism only finitely many smooth cubics $X'\subset\PP^5$ 
admitting a FM-equivalence $\ka_X\congpf\ka_{X'}$.
\end{cor}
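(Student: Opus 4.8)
The plan is to reduce the finiteness statement to the corresponding finiteness theorem for twisted K3 surfaces, or rather to a direct lattice-theoretic finiteness argument at the level of the Mukai--Hodge structures $\widetilde H(\ka_X,\ZZ)$. By Proposition \ref{prop:equivHodgeA}, any FM-equivalence $\ka_X\congpf\ka_{X'}$ induces a Hodge isometry $\widetilde H(\ka_X,\ZZ)\congpf\widetilde H(\ka_{X'},\ZZ)$ which, moreover, sends the distinguished sublattice $A_2\subset\widetilde H^{1,1}(\ka_X,\ZZ)$ generated by $\lambda_1,\lambda_2$ to a (possibly different) copy of $A_2$ inside $\widetilde H^{1,1}(\ka_{X'},\ZZ)$ — not necessarily the distinguished one, since this is only an FM-equivalence of the K3 categories and need not extend to $\Db(X)\cong\Db(X')$. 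The first step is therefore to observe that the transcendental Hodge structure $T(\ka_X)=\widetilde H^{1,1}(\ka_X,\ZZ)^\perp$ is preserved (up to Hodge isometry) by any such equivalence, and that $T(\ka_X)$ together with the period line $\widetilde H^{2,0}(\ka_X)\subset T(\ka_X)\otimes\CC$ recovers, via the Torelli theorem for cubic fourfolds (Voisin), the isomorphism class of $X$ — at least once one knows there are only finitely many cubics with that transcendental datum.

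The key step is then the following finiteness input: given the lattice and Hodge structure $\widetilde H(\ka_X,\ZZ)\cong\widetilde\Lambda$, there are only finitely many primitive embeddings $A_2\hookrightarrow\widetilde\Lambda$ (up to the action of the Hodge isometry group) whose image lies in the $(1,1)$-part. Equivalently, once we fix the Hodge structure on $\widetilde\Lambda$ determined by the period $x\in D$, the set of copies of $A_2$ inside $\widetilde H^{1,1}(\ka_{X'},\ZZ)$ that can arise as the distinguished $\langle\lambda_1,\lambda_2\rangle$ for some cubic $X'$ with $\ka_{X'}$ FM-equivalent to $\ka_X$ is finite modulo isometries. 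This is where the standard finiteness machinery enters: a lattice of bounded rank and bounded discriminant — here $\widetilde H^{1,1}(\ka_X,\ZZ)$ has rank at most $22$ and its discriminant is controlled — contains only finitely many sublattices isometric to $A_2$ up to the action of its own isometry group, by Nikulin's theory (finiteness of genera and of orbits under the orthogonal group). Each such choice, composed with the identification of the orthogonal complement $\langle\lambda_1,\lambda_2\rangle^\perp$ with $H^4(X',\ZZ)_{\mathrm{prim}}=h^\perp$, yields a marked cubic period, hence at most finitely many cubics $X'$ by the global Torelli theorem for cubic fourfolds together with the fact that the period map has finite fibers (indeed is generically injective onto its image, by Voisin/Looijenga). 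One must be slightly careful that the period of $X'$ recovered this way is actually the period of a smooth cubic and not merely a point of $D$, but the automorphism-group bookkeeping is finite regardless, so this does not affect finiteness.

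The main obstacle I expect is the bookkeeping around the distinguished $A_2$: an FM-equivalence of K3 categories genuinely need not respect the marking-by-$A_2$, so one cannot simply quote a Torelli-type statement for the pair $(\widetilde H(\ka_X,\ZZ),A_2)$. One must instead argue that, although the image of $A_2\subset\widetilde H^{1,1}(\ka_X,\ZZ)$ under the induced Hodge isometry can be any of finitely many $A_2$-sublattices of $\widetilde H^{1,1}(\ka_{X'},\ZZ)$, each such sublattice determines $X'$ up to finitely many choices — the finiteness being uniform because the number of $A_2$-sublattices is bounded by a function of the rank and discriminant of $\widetilde H^{1,1}(\ka_{X'},\ZZ)$, which in turn is bounded once $x$ is fixed. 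Packaging this cleanly — keeping track of the finite ambiguity at each of the three stages (choice of Hodge isometry up to $\mathrm{O}(\widetilde\Lambda)$-action, choice of $A_2$, reconstruction of the cubic from its period) and confirming that the composition is still finite — is the delicate part; the underlying lattice-theoretic finiteness statements are all classical and available from \cite{NikulinInt}.
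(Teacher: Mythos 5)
Your proposal is correct in outline and follows the same overall template as the paper (the Bridgeland--Maciocia strategy: pass to the induced Hodge isometry via Proposition \ref{prop:equivHodgeA}, prove a lattice-theoretic finiteness statement, then conclude with Voisin's Global Torelli theorem), but the lattice-theoretic kernel of the argument is set up differently. You classify the possible images of the distinguished sublattice $A_2=\langle\lambda_1,\lambda_2\rangle$ inside the fixed algebraic lattice $N(\ka_X)\cong\widetilde H^{1,1}(\ka_X,\ZZ)$, up to isometry, and then apply Torelli to the orthogonal complement of $A_2$ in $\widetilde H(\ka_X,\ZZ)$, which is $H^4(X',\ZZ)_{\rm prim}$. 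The paper instead classifies the possible orthogonal complements $\Gamma=N(\ka_{X'})\cap A_2^\perp$ of the primitive embeddings $T(\ka_X)\hookrightarrow A_2^\perp$, and then reaches Torelli by extending a Hodge isometry of $T(\ka_X)$ across $T(\ka_X)\oplus\Gamma$ and its finitely many overlattices inside $H^4(X',\ZZ)_{\rm prim}$. These are two views of the same configuration ($\Gamma$ is precisely the complement of the new $A_2$ inside $N$), and both require a Nikulin-style finiteness statement for primitive embeddings into a non-unimodular lattice; yours has the advantage that the embedded lattice $A_2$ is small, definite and fixed, while the paper's has the advantage that it lands directly on a Hodge isometry of $H^4_{\rm prim}$ without any discussion of which isometries of $N$ are induced by Hodge isometries of $\widetilde H(\ka_X,\ZZ)$.

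That last point is the one place where your ``delicate bookkeeping'' must actually be carried out, and it is the analogue of the paper's explicit discriminant-form and overlattice computation. Finiteness of $\OO(N)$-orbits of $A_2$-sublattices is not quite what you need: two cubics $X'$, $X''$ give isomorphic periods only if their $A_2$'s are related by an isometry of $N$ that extends to a \emph{Hodge} isometry of $\widetilde H(\ka_X,\ZZ)$. The fix is short but should be stated: the subgroup of $\OO(N)$ consisting of restrictions of Hodge isometries contains $\ker(\OO(N)\to\OO(A_N))$ (such isometries extend by the identity on $T(\ka_X)$, since $\widetilde\Lambda$ is unimodular), hence has finite index in $\OO(N)$; therefore each $\OO(N)$-orbit splits into finitely many orbits under the relevant subgroup, and each of the latter determines $X'$ up to isomorphism by Torelli. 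With that inserted, and with the finiteness of primitive $A_2$-embeddings up to $\OO(N)$ taken from \cite{NikulinInt} (which, as in the paper's own lemma, needs the non-unimodular version of the standard argument), your proof goes through.
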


\begin{proof}
The proof follows the argument for the analogous statement for K3 surfaces \cite{BrMac} closely,
but needs a modification at one point that shall be explained.

Due to the proposition, it suffices to prove that up to isomorphism there are
only finitely many cubics $X'$ such that there exists a Hodge isometry $\varphi\colon\widetilde H(\ka_X,\ZZ)\congpf \widetilde H(\ka_{X'},\ZZ)$.
Any such Hodge isometry induces a Hodge isometry $\varphi_T\colon T(\ka_X)\congpf T(\ka_{X'})$ and an isometry of lattices
$N(\ka_X)\congpf N(\ka_{X'})$. We may assume $$T(\ka_X)\subset A_2^\perp\subset \widetilde H(\ka_X,\ZZ)
~\text{ and }~A_2\subset N(\ka_X)$$ and similarly for $X'$. Note however that these inclusions need not be respected by $\varphi$.
The orthogonal complement of $T(\ka_X)^\perp\subset A_2^\perp$ is just $N(\ka_X)\cap A_2^\perp$ and the two inclusions
of $A_2^\perp$ induce two Hodge structures on $A_2^\perp$.
Note that if the Hodge isometry $\varphi_T$ can be extended to a Hodge isometry $A_2^\perp\congpf A_2^\perp$,
which can be interpreted as  a Hodge isometry $H^4(X,\ZZ)_{\rm prim}\cong H^4(X',\ZZ)_{\rm prim}$, then the Global Torelli
theorem \cite{Voisin} implies that $X\cong X'$. 

We first show that the set of isomorphism classes of lattices $\Gamma$ occuring as $N(\ka_{X'})\cap A_2^\perp$ is finite.
The required lattice theory is slightly more involved than the original one in \cite{BrMac}. Let us fix two
even lattices $\Lambda_1$  and $\Lambda$ (in our situation, $\Lambda_1=T(\ka_X)$ and
$\Lambda=A_2^\perp$). We show that up to isomorphisms there exist
only finitely many lattices $\Lambda_2$ occurring as the orthogonal complement of some
primitive embedding $\Lambda_1\hookrightarrow\Lambda$. For unimodular $\Lambda$ this is standard,
but the proof can be tweaked to cover the more general statement. Of course, it suffices to
show that  only finitely many discriminant forms $(A_{\Lambda_2},q_{\Lambda_2})$ can occur. Now
$G\coloneqq\Lambda/(\Lambda_1\oplus\Lambda_2)$ is naturally a finite subgroup of $\Lambda^*/(\Lambda_1\oplus\Lambda_2)$
of index $d=|{\rm disc}(\Lambda)|$. The first projection from $G\subset\Lambda^*/(\Lambda_1\oplus\Lambda_2)\subset A_{\Lambda_1}\oplus A_{\Lambda_2}$ defines an isomorphism of $G$ with a finite subgroup of $ A_{\Lambda_1}$. This leaves only finitely many possibilities
for the finite groups $G$ and $\Lambda^*/(\Lambda_1\oplus\Lambda_2)$.
Note that $\Lambda/(\Lambda_1\oplus\Lambda_2)\subset A_{\Lambda_1}\oplus A_{\Lambda_2}$
is isotropic but not necessarily the bigger $\Lambda^*/(\Lambda_1\oplus\Lambda_2)\subset A_{\Lambda_1}\oplus A_{\Lambda_2}$.
However, the restriction of the quadratic form to $\Lambda^*/(\Lambda_1\oplus\Lambda_2)$ takes values only in $(2/d^2)\ZZ/2\ZZ$.
For fixed $G\subset A_{\Lambda_1}$ the restriction of $q_{\Lambda_1}$ to $G$ can be extended
in at most finitely many ways to a quadratic form on $\Lambda^*/(\Lambda_1\oplus\Lambda_2)$ with values in $(2/d^2)\ZZ/2\ZZ$.
Now use the other projection $\Lambda^*/(\Lambda_1\oplus\Lambda_2)\twoheadrightarrow A_{\Lambda_2}$ to see that
there are only finitely many possibilities for the group $A_{\Lambda_2}$ and also for the quadratic form $q_{\Lambda_2}$.

To conclude the proof, we can assume that $\Gamma$ is fixed. For two Fourier--Mukai partners
realizing the fixed $\Gamma$,  any Hodge isometry $T(\ka_{X_1})\cong T(\ka_{X_2})$
can be extended to a Hodge isometry $T(\ka_{X_1})\oplus \Gamma\cong T(\ka_{X_2})\oplus\Gamma$.
As the finite index overlattices $ T(\ka_{X_i})\oplus \Gamma\subset H^4(X_i,\ZZ)_{\rm prim}$ are all
contained in $( T(\ka_{X_i})\oplus \Gamma)^*$, there are only finitely many choices for them, which allows one
to reduce to the case
that the Hodge isometry extends to a Hodge isometry $H^4(X_1,\ZZ)_{\rm prim}\cong H^4(X_2,\ZZ)_{\rm prim}$.
\end{proof}

Two  very general cubics have FM-equivalent K3 categories only if they are isomorphic:

\begin{cor}\label{cor:verygeneralnoFM}
Let $X$ be a smooth cubic with $\rk\, H^{2,2}(X,\ZZ)=1$. For a smooth cubic $X'$
there exists a FM-equivalence $\ka_X\cong\ka_{X'}$ if and only if $X\cong X'$.
\end{cor}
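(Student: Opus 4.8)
The plan is to deduce this from Corollary \ref{cor:finiteFM} together with the Global Torelli theorem for cubic fourfolds \cite{Voisin}, exploiting that the rank-one hypothesis $\rk\,H^{2,2}(X,\ZZ)=1$ forces the Hodge structure $\widetilde H(\ka_X,\ZZ)$ to be as rigid as possible. Concretely, $\rk\,H^{2,2}(X,\ZZ)=1$ means $H^{2,2}(X,\ZZ)=\ZZ\cdot\mathrm{c}_1(\ko(1))^2$, and hence $N(\ka_X)=\widetilde H^{1,1}(\ka_X,\ZZ)$ is exactly the rank-two lattice $A_2=\langle\lambda_1,\lambda_2\rangle$ while $T(\ka_X)=A_2^\perp\subset\widetilde H(\ka_X,\ZZ)$. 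So the transcendental lattice of $\ka_X$ already \emph{is} (a Hodge structure on) $A_2^\perp$, i.e.\ $H^4(X,\ZZ)_{\mathrm{prim}}$ up to sign and twist.

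The key steps, in order: (1) Given a FM-equivalence $\ka_X\cong\ka_{X'}$, Proposition \ref{prop:equivHodgeA} produces a Hodge isometry $\varphi\colon\widetilde H(\ka_X,\ZZ)\congpf\widetilde H(\ka_{X'},\ZZ)$. (2) Any Hodge isometry restricts to a Hodge isometry of transcendental lattices $\varphi_T\colon T(\ka_X)\congpf T(\ka_{X'})$ and an isometry $N(\ka_X)\congpf N(\ka_{X'})$; under the rank-one hypothesis for $X$, the latter forces $N(\ka_{X'})\cong A_2$, hence $\rk\,H^{2,2}(X',\ZZ)=1$ as well, and $\varphi_T$ is a Hodge isometry $A_2^\perp\congpf A_2^\perp$ of the two Hodge structures carried by the chosen markings $\widetilde H(\ka_X,\ZZ)\congpf\widetilde\Lambda$, $\widetilde H(\ka_{X'},\ZZ)\congpf\widetilde\Lambda$. (3) Because now $T(\ka_X)=A_2^\perp$ with no further $(1,1)$-classes to worry about, the isometry $\varphi_T$ directly gives a Hodge isometry $H^4(X,\ZZ)_{\mathrm{prim}}\congpf H^4(X',\ZZ)_{\mathrm{prim}}$ — one does not even need the overlattice/extension bookkeeping from the proof of Corollary \ref{cor:finiteFM}, since the transcendental lattice fills out all of $A_2^\perp$. (4) Possibly one must correct $\varphi_T$ by a sign or by an element of $\OO(A_2)$ acting trivially on $A_{A_2}$ (using Lemma \ref{lem:revor} / Remark \ref{rem:OA_2}) to arrange that it is induced by a genuine marking, i.e.\ lies in the image of $\OO({\rm I}_{2,21},h)$; then apply the Global Torelli theorem of Voisin \cite{Voisin} to conclude $X\cong X'$. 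The converse implication is trivial: $X\cong X'$ gives an equivalence $\Db(X)\cong\Db(X')$ restricting to a FM-equivalence $\ka_X\cong\ka_{X'}$.

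The main obstacle, as in the proof of Theorem \ref{thm:HodgeA}(i), is step (4): matching the abstract Hodge isometry with one coming from an actual isometry of $H^4(X,\ZZ)$ respecting $h$ and orientations, so that the Torelli theorem applies. One must check that $\varphi_T$, viewed as a Hodge isometry of $H^4(X,\ZZ)_{\mathrm{prim}}$, extends to $H^4(X,\ZZ)\congpf H^4(X',\ZZ)$ fixing $h$ — this is a lattice-theoretic statement about extending isometries of $A_2^\perp=h^\perp$ across the embedding $h^\perp\subset{\rm I}_{2,21}$, handled by Nikulin's theory \cite{NikulinInt} (the discriminant forms of $A_2^\perp$ and of $\ZZ\cdot h$ match up), and about the sign/orientation ambiguity, which is absorbed using that $\varphi_T$ need only be fixed up to $\pm\id$ and up to $\OO(A_2)$-symmetry as recorded in Remark \ref{rem:OA_2}. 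Everything else is a direct application of Proposition \ref{prop:equivHodgeA}, the rank computation, and \cite{Voisin}.
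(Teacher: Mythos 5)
Your proposal is correct and follows essentially the same route as the paper: the rank-one hypothesis forces $N(\ka_X)\cong A_2$ and $T(\ka_X)=A_2^\perp$, so the Hodge isometry induced by Proposition \ref{prop:equivHodgeA} restricts to a Hodge isometry of transcendental lattices that is read directly as a Hodge isometry $H^4(X,\ZZ)_{\rm prim}\cong H^4(X',\ZZ)_{\rm prim}$, and the Global Torelli theorem of Voisin concludes. Your step (4) — adjusting by a sign or an element of $\OO(A_2)$ acting trivially on the discriminant so that the isometry extends over $h$ — is a point the paper's proof of this corollary passes over silently (it is spelled out only later, in the proof of Theorem \ref{thm:noFMvg}\,ii)), so your extra care there is welcome but does not change the argument.
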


\begin{proof}
The assumption implies that $N(\ka_X)\cong A_2$. As any FM-equivalence $\ka_X\cong\ka_{X'}$
induces a Hodge isometry $\widetilde H(\ka_X,\ZZ)\cong\widetilde H(\ka_{X'},\ZZ)$, also
$N(\ka_{X'})\cong A_2$. Moreover, the natural inclusions of the transcendental 
lattices $T(\ka_X)\subset A_2^\perp$ and $T(\ka_{X'})\subset A_2^\perp$ are in fact equalities and
the induced Hodge isometry $T(\ka_X)\cong T(\ka_{X'})$ can therefore be read as a Hodge isometry
$H^4(X,\ZZ)_{\rm prim}\cong H^4(X',\ZZ)_{\rm prim}$, which by the Global Torelli theorem \cite{Voisin} implies that $X\cong X'$. 
\end{proof}

Note that in contrast,  very general projective K3 surfaces $S$, i.e.\ such that $\rho(S)=1$, usually have non-isomorphic FM-partners, see \cite{Og,St}.
The result may also be compared to the main result of \cite{BMMS} showing that for all cubic threefolds $Y\subset\PP^4$
the full subcategory $\langle\ko,\ko(1)\rangle^\perp\subset\Db(Y)$ determines $Y$.

%

\begin{remark}
In principle it should be possible to count FM-partners of $\ka_X$ for very general special cubics $X\in\kc_d$ (i.e.\  $\rk\, H^{2,2}(X,\ZZ)=2$).
On the level of Hodge theory, this amounts to counting the number of Hodge structures on $\widetilde\Lambda$ parametrized by $D$  which
are Hodge isometric to $\widetilde H(\ka_X,\ZZ)$ up to those that are Hodge isometric on $A_2^\perp$. The arguments should
follow \cite[Thm.\ 1.4]{HO}, see also \cite{St}, with the additional problem that $A_2^\perp$ is not unimodular.
%
\end{remark}

As an immediate consequence of Lemma \ref{lem:dics11} we also note

\begin{cor}
Let $X$ be a special  cubic defining a very general point in $\kc_d$. Then
$$\rk( \widetilde H^{1,1}(\ka_X,\ZZ))=3\text{ and }{\rm disc}(\widetilde H^{1,1}(\ka_X,\ZZ))=d.$$
\end{cor}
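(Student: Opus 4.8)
The plan is to identify $\widetilde H^{1,1}(\ka_X,\ZZ)$ with the rank three lattice $\Gamma_d\subset\widetilde\Lambda$, the saturation of $A_2\oplus\ZZ\cdot v$ appearing in Lemma~\ref{lem:dics11}, for a very general $X\in\kc_d$, and then to read off the discriminant directly from that lemma.

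First I would record an inclusion that holds for \emph{every} cubic $X$ whose period $x$ lies on the relevant hyperplane section $D\cap K_d^\perp=D\cap v^\perp$. Under a marking as in (\ref{eqn:markingA}) the integral $(1,1)$-part of the Hodge structure on $\widetilde H(\ka_X,\ZZ)\cong\widetilde\Lambda$ is $\widetilde\Lambda\cap x^\perp$. It always contains $A_2=\langle\lambda_1,\lambda_2\rangle$ by construction, and it contains the generator $v$ of $K_d\cap A_2^\perp$ precisely because $x\in v^\perp$. Being the intersection of $\widetilde\Lambda$ with a linear subspace, it is saturated, so it contains the saturation $\Gamma_d$ of $A_2\oplus\ZZ\cdot v$, which has rank three.

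Next I would establish the reverse inclusion for very general $X$. Since $X$ is very general in $\kc_d$, its period $x$ is a very general point of the hyperplane section $D\cap v^\perp$ (all such sections being equivalent under $\tilde\OO(h^\perp)$). If $\widetilde H^{1,1}(\ka_X,\ZZ)$ strictly contained $\Gamma_d$, then $x$ would be orthogonal to some $w\in\widetilde\Lambda$ with $w\notin\Gamma_d\otimes\QQ$, i.e.\ $x$ would lie in $D\cap v^\perp\cap w^\perp$, a proper closed subset of $D\cap v^\perp$; ranging over the countably many such $w$ shows this locus is avoided by the very general point. Hence $\widetilde H^{1,1}(\ka_X,\ZZ)=\Gamma_d$ and in particular $\rk(\widetilde H^{1,1}(\ka_X,\ZZ))=3$. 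This Baire-type step — the standard fact that a very general period has the smallest possible algebraic lattice, here transported to $\widetilde\Lambda$ — is the only point requiring care; everything else is formal.

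Finally, applying Lemma~\ref{lem:dics11} to $\Gamma_d=\widetilde H^{1,1}(\ka_X,\ZZ)$ gives ${\rm disc}(\widetilde H^{1,1}(\ka_X,\ZZ))={\rm disc}(\Gamma_d)=d$, which completes the proof.
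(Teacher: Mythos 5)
Your proof is correct and follows exactly the route the paper intends: the corollary is stated there as an immediate consequence of Lemma~\ref{lem:dics11}, with the identification $\widetilde H^{1,1}(\ka_X,\ZZ)=\Gamma_d$ for very general $X\in\kc_d$ (via the inclusion of the saturated lattice $\Gamma_d$ and the standard very-general/Baire argument) left implicit. You have merely written out those details, and they are all sound.
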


\begin{remark}
Suppose $d$ satisfies ($\ast$$\ast'$) and is written as $d=k^2d_0$. Then $d_0$ also satisfies ($\ast$$\ast'$). The most interesting case
is when in fact $d_0$ satisfies ($\ast$$\ast$). Then for very general $X\in \kc_d$, there exists a twisted K3 surface $(S,\alpha)$ with $\alpha$
of order $k$ and such that $\ka_X\cong\Db(S,\alpha)$, see Lemma \ref{lem:OrderBrauer}. Moreover, there also exists a cubic $X'\in\kc_{d_0}$ such that
$\ka_{X'}\cong\Db(S)$. So, a K3 surface $S$ of the proper degree, with its various Brauer classes, is often related to more
than one smooth cubic $X$.
\end{remark}
\subsection{} We are  interested in the group $\Aut(\ka_X)$
of isomorphism classes of FM-equivalences $\Phi\colon\ka_X\congpf\ka_X$. 
As any FM-equivalence $\Phi$ induces a Hodge isometry
$$\Phi^H\colon\widetilde H(\ka_X,\ZZ)\congpf \widetilde H(\ka_X,\ZZ),$$
there is  a natural homomorphism
\begin{equation}\label{eqn:repAut}
\rho\colon\Aut(\ka_X)\to\Aut(\widetilde H(\ka_X,\ZZ)),~\Phi\mapsto\Phi^H.
\end{equation}
Here, $\Aut(\widetilde H(\ka_X,\ZZ))$ denotes the group of Hodge isometries. 
We say that $\Phi$ is \emph{symplectic} if the induced action on $\widetilde H^{2,0}(\ka_X)$, or equivalently
on $T(\ka_X)$, is the identity. The subgroup of symplectic autoequivalences shall be denoted
by $\Aut_{\rm s}(\ka_X)$. Thus, (\ref{eqn:repAut})  induces $$\rho\colon\Aut_{\rm s}(\ka_X)\to\Aut(\widetilde H^{1,1}(\ka_X,\ZZ)).$$

\begin{remark}
By $\Aut^+(\widetilde H(\ka_X,\ZZ))$ one denotes the subgroup of Hodge isometries preserving 
a given orientation of the four positive directions. We expect that
${\rm Im}(\rho)= \Aut^+(\widetilde H(\ka_X,\ZZ))$. This is known
if $\ka_X\cong\Db(S)$,  see \cite{HMS}, and one inclusion can be proved 
for non-special cubics, see Theorem \ref{thm:noFMvg}.
\end{remark}

\begin{ex}\label{exa:sphtw}
The most important autoequivalences of K3 categories, responsible for the complexity of
the groups $\Aut(\Db(S))$ and $\Aut(\ka_X)$, are spherical twists. Associated with any spherical
object $A\in\ka_X$, i.e.\ $\Ext^*(A,A)\cong H^*(S^2)$, there exists a FM-equivalence
$$T_A\colon\ka_X\congpf\ka_X$$ that sends $E\in\ka_X$ to the cone $T_A(E)$ of the evaluation map
$R{\rm Hom}(A,E)\otimes A\to E$. This is indeed a FM-equivalence -- its kernel
can be described as the cone of the  composition $A\!^\vee\boxtimes A\stackrel{tr}{\to}\ko_\Delta\to
({\rm id},i)^*(\ko_\Delta)$. Here, $({\rm id},i)^*$ is the left adjoint
$\Db(X\times X)\to\Db(X)\boxtimes\ka_X$ and $A\!^\vee\in\ka_X(-2)$ is the image of the classical
dual of $A$ in ${\rm D}^{\rm b}(X)$ under the left adjoint of $\ka_X(-2)\subset{\rm D}^{\rm b}(X)$. (With these choices the cone is
contained in $\ka_X(-2)\boxtimes\ka_X$ and would indeed induce a functor ${\rm D}^{\rm b}(X)\to\ka_X$ that
is trivial on ${} ^\perp\!\ka_X$.)

The action of the spherical twist $T_A\colon \ka_X\congpf\ka_X$ on $\widetilde H(\ka_X,\ZZ)$
is given by the reflection $s_\delta\colon v\mapsto v+\langle v,\delta\rangle\cdot \delta$, where
$\delta\in \widetilde H^{1,1}(\ka_X,\ZZ)$ is the Mukai vector of $A$.
\end{ex}

In \cite[Sect.\ 4]{Kuz3} Kuznetsov considers the functor $$\Psi\colon\ka_X\to\ka_X,~~ÊE\mapsto i^*(i_*E\otimes\ko_X(1))[-1],$$
which turns out to be an equivalence satisfying $\Psi^3\cong[-1]$. Clearly, by construction $\Psi$ is a FM-equivalence.
In fact, for the proof that $\ka_X$ is a K3 category this functor is crucial.
Define $$\Phi_0\coloneqq \Psi[1],$$
which satisfies $\Phi_0^3\cong [2]$.

\begin{prop}\label{prop:GenericPhi0} The autoequivalence $\Phi_0\colon\ka_X\congpf\ka_X$ 
is symplectic and the induced action $\Phi_0^H\colon\widetilde H(\ka_X,\ZZ)\congpf \widetilde H(\ka_X,\ZZ)$
corresponds to the element in  $\OO(A_2)$ that is given by the cyclic
permutation of the  roots $\lambda_1,\lambda_2,-\lambda_1-\lambda_2$. 
\end{prop}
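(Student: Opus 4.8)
The plan is to reduce the statement to an explicit computation of $\Phi_0^H$ on the Mukai lattice. The functor $\Phi_0=\Psi[1]$ is assembled from the adjunction functors $i^*,i_*$ and the twist $-\otimes\ko_X(1)$, so it is a FM-equivalence by construction, and Proposition \ref{prop:equivHodgeA} gives that $\Phi_0^H$ is a Hodge isometry of $\widetilde H(\ka_X,\ZZ)$; since $\Phi_0^3\cong[2]$ and the double shift acts as the identity on $K_{\rm top}$ (a single shift acts by $-\id$), we have in addition $(\Phi_0^H)^3=\id$. Next I would make the action explicit. Write $v\colon K_{\rm top}(X)\otimes\QQ\cong H^{\rm even}(X,\QQ)$ for the rational Mukai vector. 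Because $\Hom^\bullet(\ko_X(i),E)=0$ for $E\in\ka_X$, the span $\langle v(\ko_X),v(\ko_X(1)),v(\ko_X(2))\rangle$ is orthogonal to $v(\ka_X)$; with respect to the resulting orthogonal decomposition of $H^{\rm even}(X,\QQ)$ the functor $i^*$ acts on Mukai vectors as the orthogonal projection $\mathrm{pr}$ onto $v(\ka_X)$, while $i_*$ is the inclusion. As the two shifts in $\Psi[1]$ cancel, $\Phi_0(E)\cong i^*(i_*E\otimes\ko_X(1))$, so on $v(\ka_X)$
\begin{equation*}
\Phi_0^H\colon\ w\longmapsto \mathrm{pr}(e^{h}\cdot w),\qquad e^h=\mathrm{ch}(\ko_X(1)),
\end{equation*}
and as a check $(\mathrm{pr}\circ(e^h\,\cdot\,))^3=\id$.

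From this formula I would first deduce $\Phi_0^H|_{A_2^\perp}=\id$, where $v$ identifies $A_2^\perp\subset\widetilde H(\ka_X,\ZZ)$ with $H^4(X,\ZZ)_{\rm prim}$. Neither multiplication by $e^h$ nor $\mathrm{pr}$ changes the primitive part of the $H^4$-component of a class in $H^{\rm even}(X,\QQ)$: for $e^h$ the correction $h\cdot w_2+\tfrac{h^2}{2}w_0$ lies in $\QQ h^2$, and each $v(\ko_X(i))=e^{ih}\sqrt{\mathrm{td}_X}$ has $H^4$-component in $\QQ h^2$ (as $c_2(T_X)\in\QQ h^2$). On the other hand, under $v(\ka_X)=v(A_2)\oplus v(A_2^\perp)$ this primitive $H^4$-component is zero on $v(A_2)$ — the classes $\lambda_j=[i^*\ko_\ell(j)]$ have Mukai vector in $H^6\oplus H^8$ modulo $\QQ h^2$ — and equals the above isometry on $v(A_2^\perp)$. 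Hence for $w\in A_2^\perp$ the formula forces $\Phi_0^H(w)=w+a$ with $a\in A_2$, and then the isometry property of $\Phi_0^H$ together with positive-definiteness of $A_2$ yields $a=0$. Thus $\Phi_0^H|_{A_2^\perp}=\id$; in particular $\widetilde H^{2,0}(\ka_X)\subset A_2^\perp\otimes\CC$ is fixed and $\Phi_0$ is symplectic.

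It remains to identify $\Phi_0^H|_{A_2}$. Since $\Phi_0^H$ fixes $A_2^\perp$ pointwise and $A_2\subset\widetilde\Lambda$ is primitive, $\Phi_0^H$ preserves $A_2=(A_2^\perp)^\perp$, so $\Phi_0^H=g\oplus\id_{A_2^\perp}$ for some $g\in\OO(A_2)$; since $\Phi_0^H\in\OO(\widetilde\Lambda)$, the isometry $g$ must act trivially on the discriminant $A_{A_2}$, i.e.\ $g\in\mathfrak{S}_3$ (Remark \ref{rem:OA_2}), and $g^3=\id$ then forces $g\in\mathfrak{A}_3$. The Mukai formula pins $g$ down: since $\Hom^\bullet(\ko_X(2),\ko_\ell(1))=H^\bullet(\PP^1,\ko(-1))=0$, the $\ko_X(2)$-component of $\ko_\ell(1)$ in the semi-orthogonal decomposition drops out, and $v(\Phi_0^H(\lambda_1))=\mathrm{pr}(e^hv(\ko_\ell(1)))=\mathrm{pr}(v(\ko_\ell(2)))=v(\lambda_2)$, i.e.\ $\Phi_0^H(\lambda_1)=\lambda_2$. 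Thus $g\neq\id$ and $g(\lambda_1)=\lambda_2$, which among the elements of $\mathfrak{A}_3$ is precisely the cyclic permutation $\lambda_1\mapsto\lambda_2\mapsto-\lambda_1-\lambda_2\mapsto\lambda_1$, proving the claim. I expect the only real work here to be the reduction at the end of the first paragraph — verifying on topological $K$-theory that $i^*$ is computed by $\mathrm{pr}$ and that the two shifts in $\Psi[1]$ cancel, so that $\Phi_0^H$ takes the displayed form; the rest is routine lattice theory and a one-line cohomology computation.
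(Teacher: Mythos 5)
Your argument is correct, but it is not the one the paper gives: you carry out the ``direct computation'' that the paper explicitly mentions as an alternative and then declines to do. The paper proves symplecticity by noting the cohomological action is deformation-invariant and reducing to a cubic whose transcendental lattice has odd rank (where $\pm\id$ are the only Hodge isometries and $\Phi_0^3\cong[2]$ forces $+\id$); it then gets $\Phi_0^H\in{\mathfrak S}_3$ by the same discriminant argument you use, upgrades to ${\mathfrak A}_3$ by invoking the orientation-preservation theorem of Huybrechts--Macr\`i--Stellari on a cubic with $\ka_X\cong\Db(S)$, and rules out $\Phi_0^H=\id$ by appealing to Bayer--Bridgeland: if $\Phi_0^H$ were trivial, $\Phi_0$ would be a product of squares of spherical twists and even shifts, contradicting $\Phi_0^3\cong[2]$. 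Your route replaces all three external inputs by the explicit K-theoretic formula $\Phi_0^H(w)=\mathrm{pr}(e^h\cdot w)$: you get ${\mathfrak A}_3$ from $(\Phi_0^H)^3=\id$ alone rather than from orientation, and $\Phi_0^H\ne\id$ from the one-line computation $\mathrm{pr}(e^h v(\ko_\ell(1)))=\mathrm{pr}(v(\ko_\ell(2)))=v(\lambda_2)$, which moreover pins down \emph{which} of the two $3$-cycles occurs --- strictly more than the paper's proof establishes. The price is the verification you yourself flag: that the projection functor $i^*$ acts on $K_{\rm top}$ by $\mathrm{pr}$ and that the Mukai-vector formalism behaves as claimed; this is routine (it is the same bookkeeping underlying Example \ref{exa:sphtw}) but should be written out, and you should note that the Euler pairing on all of $K_{\rm top}(X)$ is not symmetric, so ``orthogonal projection'' must be read as projection along $\langle[\ko],[\ko(1)],[\ko(2)]\rangle$ onto its one-sided orthogonal complement --- which is how you actually use it, so nothing breaks. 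Your deduction of $\Phi_0^H|_{A_2^\perp}=\id$ from the invariance of the primitive $H^4$-component, followed by the positive-definiteness of $A_2$ to kill the correction term, is also sound and gives symplecticity for every cubic without the deformation-invariance detour.
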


\begin{proof} 
As the action on cohomology is independent of the specific
cubic $X\subset\PP^5$,
we can assume that the transcendental lattice $T(\ka_X)\subset\widetilde H(\ka_X,\ZZ)$
is of odd rank. However,  $\pm{\rm id}$ are the only Hodge isometries
of an irreducible Hodge structure of weight two of K3 type of odd rank, cf.\ \cite[Cor.\ 3.3.5]{HK3}, and, as $\Phi_0^3\cong[2]$
acts trivially on $\widetilde H(\ka_X,\ZZ)$, we must have $\Phi_0^H={\rm id}$ on $T(\ka_X)$, i.e.\ $\Phi_0$ is symplectic.

If $X$ is a cubic with $A_2\cong\widetilde H^{1,1}(\ka_X,\ZZ)$, then $\Phi_0^H$ corresponds to
an element in $\OO(A_2)$. As $\Phi_0$ is symplectic, $\Phi_0^H={\rm id}$ on $A_2^\perp$ and hence $\Phi_0^H={\rm id}$
on the discriminant group $A_{A_2}$. Therefore, $\Phi_0^H\in{\mathfrak S}_3$, see Remark \ref{rem:OA_2}. 
For a cubic $X$ such that $\ka_X\cong \Db(S)$, we know that $\Phi_0^H$ must be orientation preserving 
by \cite{HMS} and thus $\Phi_0^H\in{\mathfrak A}_3\cong\ZZ/3\ZZ$ in general. 

It remains to show that $\Phi_0^H\ne{\rm id}$.
One way to see this relies on a direct computation. Another possibility is to use the recent result of Bayer and Brigeland
\cite{BB} confirming Bridgeland's conjecture in \cite{BrK3}  in the case of a K3 surface $S$  of Picard rank one. More precisely, due to \cite[Thm.\ 1.4]{BB}   for a K3 surface $S$ with $\rho(S)=1$ the subgroup of $\Aut(\Db(S))$ of autoequivalences acting trivially on $\widetilde H(S,\ZZ)$ is the
product of $\ZZ[2]$ and the free group generated by squares of spherical twists $T_E^2$ associated with
spherical vector bundles $E$ on $S$. (That this is a reformulation of Bridgeland's original conjecture
for $\rho(S)=1$ had also been observed  by Kawatani \cite{Kaw}.)
Hence, if $\Phi_0^H={\rm id}$, then $\Phi_0=({\Asterisk_i} T_{E_i}^2)\circ[2k]$,
but then clearly $\Phi_0^3$ could not be isomorphic to the double shift $[2]$.
\end{proof}

\begin{cor}\label{cor:Z3Z}
For every smooth cubic $X\subset \PP^5$ the group of symplectic  FM-autoequivalences
${\rm Aut}_s(\ka_X)$ contains an infinite cyclic group $\ZZ\subset\Aut_s(\ka_X)$ generated by $\Phi_0$
such that  $$\ZZ\cdot[2]\subset\ZZ$$ is a subgroup of index three and such that the natural map
$\rho\colon\Aut_s(\ka_X)\to\Aut(\widetilde H(\ka_X,\ZZ))$ defines an isomorphism of the quotient $\ZZ/\ZZ\cdot [2]$  with the subgroup
 ${\mathfrak A}_3\subset\OO(A_2)\subset \OO( \widetilde H(\ka_X,\ZZ))$ of alternating permutations of the roots
 $\lambda_1,\lambda_2,-\lambda_1-\lambda_2$ of $A_2$.\qed
\end{cor}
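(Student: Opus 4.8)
The statement is essentially a bookkeeping corollary assembling what has already been established about the distinguished autoequivalence $\Phi_0 = \Psi[1]$, so the plan is to organize these pieces rather than to prove anything genuinely new. First I would recall from the construction preceding Proposition \ref{prop:GenericPhi0} that $\Phi_0$ is a genuine FM-autoequivalence of $\ka_X$ (since $\Psi$ is, being built from adjoints and tensoring by a line bundle) and that $\Phi_0^3 \cong [2]$; together with Proposition \ref{prop:GenericPhi0}, which says $\Phi_0$ is symplectic, this already places $\Phi_0 \in \Aut_s(\ka_X)$. Let $\ZZ \subset \Aut_s(\ka_X)$ denote the subgroup generated by $\Phi_0$. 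The relation $\Phi_0^3 \cong [2]$ shows $[2] \in \ZZ$, hence $\ZZ\cdot[2] \subset \ZZ$ is a subgroup, and it has index (at most) three.

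Next I would pin down that the index is exactly three and that $\ZZ$ is genuinely infinite cyclic, by passing through the representation $\rho\colon \Aut_s(\ka_X) \to \Aut(\widetilde H(\ka_X,\ZZ))$. By Proposition \ref{prop:GenericPhi0}, $\rho(\Phi_0) = \Phi_0^H$ is the order-three element of ${\mathfrak A}_3 \subset \OO(A_2)$ given by the cyclic permutation of the roots $\lambda_1, \lambda_2, -\lambda_1-\lambda_2$, while $\rho([2]) = \id$ since the shift acts trivially on cohomology. Therefore $\rho$ descends to a map $\ZZ/\ZZ\cdot[2] \to \Aut(\widetilde H(\ka_X,\ZZ))$ whose image is the cyclic group of order three generated by $\Phi_0^H$, i.e.\ exactly ${\mathfrak A}_3$. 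This forces $[\ZZ : \ZZ\cdot[2]] = 3$ (it cannot be $1$, since $\Phi_0^H \ne \id$, and it is at most $3$ by the relation), hence the quotient $\ZZ/\ZZ\cdot[2] \cong \ZZ/3\ZZ$ and the induced map to ${\mathfrak A}_3$ is an isomorphism. Since $[2]$ has infinite order in $\Aut_s(\ka_X)$ (the shift functors $[2k]$ are pairwise non-isomorphic on any K3 category with nonzero objects — e.g.\ because they act differently on $\Ext$-gradings, or simply because $\Db(S)$-comparison in Example \ref{ex:kuz} / Proposition \ref{prop:indHodge} reduces this to the K3 case), the subgroup $\ZZ = \langle \Phi_0\rangle$ is infinite cyclic.

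There is essentially no hard step here; the only point requiring a moment of care is the claim that $[2]$ (equivalently $\Phi_0$) has infinite order in $\Aut_s(\ka_X)$, i.e.\ that the corollary's ``$\ZZ$'' really is $\ZZ$ and not a finite cyclic group. This is where I would be slightly careful to invoke that $\ka_X$ contains nonzero objects and that the Serre functor is $[2]$, so a relation $[2k] \cong \id$ would make the Serre functor have finite order, contradicting standard facts about K3-type categories (again reducible to $\Db(S)$ via the deformation/specialization to a cubic with $\ka_X \cong \Db(S)$, as in Example \ref{ex:kuz}, since the isomorphism class of the group generated by $\Phi_0$ and $[2]$ is deformation-invariant — the action on $\widetilde H(\ka_X,\ZZ)$ already distinguishes the three cosets). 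Everything else is immediate from Proposition \ref{prop:GenericPhi0} and the relation $\Phi_0^3 \cong [2]$, which is why the statement is phrased as a corollary with a \qed in the statement line.
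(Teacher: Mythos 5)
Your proposal is correct and matches the paper's intent exactly: the paper gives no separate proof (the corollary carries a \qed in its statement), treating it as an immediate assembly of Proposition \ref{prop:GenericPhi0}, the relation $\Phi_0^3\cong[2]$, and the fact that $[2]$ has infinite order while acting trivially on $\widetilde H(\ka_X,\ZZ)$. Your added care about why $\langle\Phi_0\rangle$ is genuinely infinite cyclic (via the infinite order of the shift) is a reasonable elaboration of the same argument, not a different route.
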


\begin{remark}\label{rem:univcoveract}
The subgroup ${\rm SO}(A_2)\subset{\rm O}(A_2)$ of orientation preserving
isometries of $A_2$ is ${\mathfrak A}_3\times\ZZ/2\ZZ\cong\ZZ/3\ZZ\times\ZZ/2\ZZ$, see Remark
\ref{rem:OA_2}. Its action can be `lifted' to an action on $\ka_X$ via the natural extension
$$0\to\ZZ\cdot[2]\to(\ZZ\cdot\Phi_0\times\ZZ\cdot[1])/(\Phi_0^3-[2])\to{\rm SO}(A_2)\to 0,$$ which can be seen
as induced by the universal cover of ${\rm SO}(A_2\otimes\RR)$. Clearly, the group in the middle is still infinite cyclic.
\end{remark}

Inspired  by Bridgeland's conjecture for K3 surfaces in \cite{BrK3}, we state the following (see \cite[Sect.\ 5.4]{HuyStab} explaining this reformulation):

\begin{conj}\label{conj:Brid} \emph{There exists an isomorphism
$${\rm Aut}_{\rm s}(\ka_X)\cong\pi_1^{\rm st}[P_0/{\rm O}].$$}
\end{conj}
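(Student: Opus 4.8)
The plan is to transport to $\ka_X$ the strategy used for K3 surfaces by Bridgeland and Bayer--Bridgeland \cite{BrK3,BB}. First I would try to equip $\ka_X$ with a distinguished connected component $\Stab^\dagger(\ka_X)$ of its space of Bridgeland stability conditions, together with the forgetful map to the central charge
$$\mathcal Z\colon\Stab^\dagger(\ka_X)\longrightarrow N(\ka_X)\otimes\CC=\widetilde H^{1,1}(\ka_X,\CC),$$
and identify the image of $\mathcal Z$ with a distinguished component $P_0$ of the complement of the spherical hyperplanes $\bigcup_{\delta^2=-2}\delta^\perp$ in the open set of those $Z$ for which ${\rm Re}(Z)$ and ${\rm Im}(Z)$ span a positive plane in $\widetilde H^{1,1}(\ka_X,\RR)$ — the $(-2)$-hyperplanes being removed because $\mathcal Z$ cannot vanish on the Mukai vector of a stable spherical object, cf.\ Example \ref{exa:sphtw}; see \cite[Sect.\ 5.4]{HuyStab} for this period description of $P_0$. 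Already this step is problematic: as stressed in the paper, for a general cubic it is not even known that $\Stab(\ka_X)\neq\emptyset$, so one would first have to settle that, while for the cubics with $\ka_X\cong\Db(S,\alpha)$ the existing constructions for (twisted) K3 surfaces can be imported.

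Next I would prove the analogue of Bridgeland's conjecture itself: that $\mathcal Z\colon\Stab^\dagger(\ka_X)\to P_0$ is a covering map — this part is essentially formal, combining the local homeomorphism property of the forgetful map with a wall-and-chamber analysis near the spherical walls, exactly as for K3 surfaces — and, crucially, that $\Stab^\dagger(\ka_X)$ is simply connected, so that $\mathcal Z$ exhibits it as the universal cover $\widetilde{P_0}\to P_0$. This is where the main obstacle lies: even for K3 surfaces this simple connectedness is open in general and was established by Bayer--Bridgeland \cite{BB} only under the hypothesis $\rho(S)=1$. For $\ka_X$ the natural first case is again the generic cubic, where $N(\ka_X)\cong A_2$ is positive definite and carries no $(-2)$-classes, so that $P_0$ is just the positive-plane locus and $\pi_1(P_0)\cong\ZZ$; there one would aim to match the deck group with the explicit infinite cyclic group generated by $[2]$ found in Corollary \ref{cor:Z3Z}.

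The third step is to identify the deck transformation group of $\mathcal Z$ with $\ker\bigl(\rho|_{\Aut_{\rm s}(\ka_X)}\bigr)$, the subgroup of symplectic FM-autoequivalences acting trivially on $\widetilde H(\ka_X,\ZZ)$: every such equivalence preserves $\Stab^\dagger(\ka_X)$ and acts on it compatibly with $\mathcal Z$, the action is free and properly discontinuous, and conversely every deck transformation is realised in this way — for K3 surfaces this is the content of \cite[Thm.\ 1.4]{BB}, where the deck group is generated by the squares $T_E^2$ of spherical twists (Example \ref{exa:sphtw}), and one expects the same pattern here. This would give $\ker(\rho|_{\Aut_{\rm s}})\cong\pi_1(P_0)$.

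Finally I would combine this with the identification of the image of $\rho|_{\Aut_{\rm s}(\ka_X)}$ with the lattice group $\OO$ occurring in the statement — supplied, at least up to finite index, by Theorem \ref{thm:noFMvg} in the non-special case and, in general, by the action constructed in Section \ref{sec:ProofAut}, and conjectural in full strength — to obtain a short exact sequence
$$1\longrightarrow\pi_1(P_0)\longrightarrow\Aut_{\rm s}(\ka_X)\longrightarrow\OO\longrightarrow 1.$$
Since $\OO$ acts on $P_0$ and $\mathcal Z$ is $\OO$-equivariant with deck group $\pi_1(P_0)$, this is precisely the short exact sequence computing the orbifold fundamental group of the quotient stack $[P_0/\OO]$, whence $\Aut_{\rm s}(\ka_X)\cong\pi_1^{\rm st}[P_0/\OO]$; see \cite[Sect.\ 5.4]{HuyStab} for the translation into this stacky language. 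I expect the genuinely hard input to be the simple connectedness of $\Stab^\dagger(\ka_X)$ in step two, with the very existence of stability conditions on an arbitrary $\ka_X$ a prerequisite that is itself wide open.
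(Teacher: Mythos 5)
This statement is stated in the paper as a \emph{conjecture}, and the paper gives no proof of it; indeed, immediately after formulating it the author remarks that, in contrast to the case of untwisted K3 surfaces, one does not even have a natural map between the two groups $\Aut_{\rm s}(\ka_X)$ and $\pi_1^{\rm st}[P_0/{\rm O}]$ at present. So there is nothing in the paper for your argument to be compared against, and your proposal should be judged as a proof strategy rather than a proof.

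As a strategy it is the expected one — transport the Bridgeland/Bayer--Bridgeland program from $\Db(S)$ to $\ka_X$ — and you correctly identify where it breaks down. But to be clear about the status: every substantive step in your outline is currently out of reach for a general cubic. The existence of a single stability condition on $\ka_X$ (hence of a distinguished component $\Stab^\dagger(\ka_X)$ and of the map $\mathcal Z$) is unknown outside the locus where $\ka_X\cong\Db(S,\alpha)$, as the paper stresses in Sections 1 and 3; without it one cannot even write down the map whose covering and deck-group properties you propose to establish, which is exactly the paper's remark about the absence of a natural map. The covering-map step is not ``essentially formal'' either: for K3 surfaces it rests on the support property and a detailed wall-and-chamber analysis that has no analogue yet for $\ka_X$ without t-structures or moduli of stable objects. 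Simple connectedness of $\Stab^\dagger$ is open even for most K3 surfaces, as you note. Finally, the identification of ${\rm Im}(\rho|_{\Aut_{\rm s}})$ with the full group ${\rm O}$ of isometries acting trivially on the discriminant is itself only known in fragments (Theorem \ref{thm:noFMvg} and Corollary \ref{cor:Z3Z} give the very general case and the cyclic subgroup generated by $\Phi_0$). Your outline is a reasonable research program and consistent with what the paper expects, but none of it constitutes a proof, and the paper does not claim otherwise.
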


Here, $P\subset \PP(\widetilde H^{1,1}(\ka_X,\ZZ)\otimes\CC)$ is the period domain defined analogously to $D$ and $Q$ in 
Section \ref{sec:defperiod} and
$P_0\coloneqq P\setminus\bigcup\delta^\perp$, with the union over all $(-2)$-classes $\delta\in\widetilde H^{1,1}(\ka_X,\ZZ)$.
Moreover, ${\rm O}\subset{\rm O}(\widetilde H^{1,1}(\ka_X,\ZZ))$ is the subgroup of all isometries acting trivially on the discriminant.
However, contrary to the case of untwisted K3 surfaces we do not even have a natural map between these two groups at the moment.
\subsection{} The cubic K3 category $\ka_X$ can also be described as a category
of graded matrix  factorizations, see \cite{Orlov}. More precisely, there exists an exact linear equivalence
$$\ka_X\cong {\rm MF}(W,\ZZ).$$
Here, $W\in R\coloneqq k[x_0,\ldots,x_5]$ is a cubic polynomial defining $X$. The
objects of ${\rm MF}(W,\ZZ)$ are pairs $(K\stackrel{\alpha}{\to}L,L\stackrel{\beta}{\to} K(3))$,
where $K$ and $L$ are finitely generated, free, graded $R$-modules and $\alpha,\beta$ are
graded $R$-module homomorphisms with $\beta\circ\alpha=W\cdot{\rm id}=\alpha\circ\beta$. 
Recall that $K(n)$ for a graded $R$-module $K=\bigoplus K_i$ is the graded module
with $K(n)_i=K_{n+i}$. Homomorphisms in ${\rm MF}(W,\ZZ)$ are the obvious ones modulo
those that are homotopic to zero (everything $\ZZ/2\ZZ$-periodic). 

The \emph{shift functor}  that makes ${\rm MF}(W,\ZZ)$ a triangulated category
is given by $$(K\stackrel{\alpha}{\to}L,L\stackrel{\beta}{\to} K(3))[1]=(L\stackrel{-\beta}{\to}K(3),K(3)\stackrel{-\alpha}{\to} L(3)).$$

Viewing $\ka_X$ as the category of graded matrix factorizations allows one to describe $\Phi_0$ in Proposition \ref{prop:GenericPhi0} alternatively as follows.
Consider the \emph{grade shift functor}
\begin{eqnarray*}
\Phi_0\colon {\rm MF}(W,\ZZ)&\congpf& {\rm MF}(W,\ZZ)\\
 (K\stackrel{\alpha}{\to}L,L\stackrel{\beta}{\to} K(3))&\mapsto&(K(1)\stackrel{\alpha}{\to}L(1),L(1)\stackrel{\beta}{\to} K(4)).
 \end{eqnarray*}
Then, obviously, $$\Phi_0^3\cong [2].$$ 

Note that $\Phi_0$ constructed in this way coincides with the FM-equivalence of Proposition \ref{prop:GenericPhi0}, 
see \cite[Prop. 5.8]{BFK}.


\section{The Fano variety}\label{sec:Fano}

For the sake of completeness, let us also mention the recent results of Addington \cite{Add} building upon an observation
of Hassett \cite{HassettComp}, see also \cite{MacStel}.  For this consider the Fano variety of lines $F(X)$,
which, due to work of Beauville and Donagi \cite{BD}, is a four-dimensional irreducible holomorphic symplectic variety deformation equivalent to ${\rm Hilb}^2({\rm K3})$.
\smallskip

\noindent
$\bullet$ For a smooth cubic $X$ and its period $x\in D$ the following two conditions are equivalent:\\
i) $x\in D_d$ such that $d$ satisfies  ($\ast$$\ast$);\\
ii) $F(X)$ is birational to a moduli space of stable sheaves $M(v)$ on some K3 surface $S$.

\smallskip

\noindent
$\bullet$ For a smooth cubic $X$ and its period $x\in D$ the following two conditions are equivalent:\\
iii) $x\in D_d$ such that  there exist integers $n$ and $a$ with $da^2=2(n^2+n+1)$;\\
iv) $F(X)$ is birational to the Hilbert scheme
${\rm Hilb}^2(S)$ of some K3 surface $S$.

Obviously, iv) implies ii) or, equivalently and after a moment's thought, iii) implies i). See \cite{GS} for a discussion
of the relation between rationality of the cubic $X$ and condition iii) (or, equivalently, iv)).

\begin{prop}\label{prop:analAdd}
For the period $x$ of a smooth cubic $X$ the following two conditions are equi\-valent:\\
{\rm i)} $x\in D_d$ with $d$ satisfying ($\ast$$\ast'$);\\
{\rm ii)} $F(X)$ is birational to a moduli space of stable twisted sheaves on some K3 surface.
\end{prop}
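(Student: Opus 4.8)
The plan is to mimic Addington's argument for the untwisted case from \cite{Add}, replacing moduli spaces of untwisted sheaves by moduli spaces of twisted sheaves and keeping track of the order of the Brauer class. Recall that $F(X)$ is a holomorphic symplectic fourfold of $\mathrm{K3}^{[2]}$-type, so its Beauville--Bogomolov lattice $H^2(F(X),\ZZ)$ is isomorphic to $\Lambda' \coloneqq E_8(-1)^{\oplus 2}\oplus U^{\oplus 3}\oplus \ZZ(-2)$, and by Beauville--Donagi \cite{BD} the Abel--Jacobi map identifies $H^4(X,\ZZ)_{\mathrm{prim}}$ with the primitive part $\langle\delta\rangle^\perp$, where $\delta$ is the exceptional class of square $-2$. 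Under the markings fixed in Section \ref{sec:cubiclat}, the period $x\in D_d$ corresponds to a Hodge structure on $\widetilde\Lambda$ in which $A_2\oplus\ZZ v$ is of type $(1,1)$, with $(v)^2$ as in (\ref{eqn:v^2}). The key translation, as in \cite{Add}, is that $F(X)$ is birational to a moduli space $M_S(w)$ of (possibly twisted) stable sheaves on a K3 surface $S$ if and only if there is a Hodge isometry $H^2(F(X),\ZZ)\cong w^\perp$ inside the (twisted) Mukai lattice $\widetilde H(S,\beta,\ZZ)$ for some primitive $w$ with $(w)^2=2$, compatibly with the respective markings — here one uses Verbitsky's Torelli theorem for $\mathrm{K3}^{[2]}$-type and the fact that birational holomorphic symplectic manifolds are deformation equivalent with Hodge-isometric second cohomology.

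The heart of the argument is then purely lattice-theoretic. First I would show (i)$\Rightarrow$(ii). Assume $x\in D_d$ with $d$ satisfying ($\ast\ast'$). By Proposition \ref{prop:DtwDd} (or Corollary \ref{prop:DtwK3De}), $x\in D_{\mathrm{K3'}}$, so by Lemma \ref{lem:UUn}(ii) there is an embedding $U(n)\hookrightarrow\widetilde\Lambda$ into the $(1,1)$-part, and the proof of Lemma \ref{lem:UUn} produces a twisted K3 surface $(S,\alpha)$ with a Hodge isometry $\widetilde H(S,\alpha,\ZZ)\cong\widetilde H(\ka_X,\ZZ)$ (as Hodge structures on $\widetilde\Lambda$). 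Now, inside $\widetilde H(\ka_X,\ZZ)$ one has the fixed copy $A_2=\langle\lambda_1,\lambda_2\rangle$, and $H^2(F(X),\ZZ)$ is realized as the orthogonal complement of a primitive class of square $2$ in $A_2$ — indeed $\lambda_1-\lambda_2$ has square $(2)+(2)-2(-1)\cdot(-1)$; more carefully, one checks using Remark \ref{rem:OA_2} and the description of $\OO(A_2)$ that $A_2$ contains a primitive vector of square $2$, and its orthogonal complement in $\widetilde H(\ka_X,\ZZ)$ is the Abel--Jacobi image of $\langle\delta\rangle^\perp\subset H^4(X,\ZZ)$. Transporting this square-$2$ class through the Hodge isometry to $\widetilde H(S,\alpha,\ZZ)$ gives the Mukai vector $w$ of a twisted sheaf with $w^\perp\cong H^2(F(X),\ZZ)$ as Hodge structures; the moduli space $M_S(w,\alpha)$ of $\alpha$-twisted stable sheaves (for a suitable generic polarization) is then smooth projective holomorphic symplectic of the right dimension, and the Hodge isometry exhibits it as birational to $F(X)$ via Verbitsky's Torelli theorem. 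For the converse (ii)$\Rightarrow$(i), run this backwards: a birational model $M_S(w,\alpha)$ gives a Hodge isometry $w^\perp\cong H^2(F(X),\ZZ)\cong\langle\delta\rangle^\perp$, hence (extending by the square-$2$ classes, using uniqueness of the gluing as in \cite[Thm.\ 1.14.4]{NikulinInt} type arguments and primitivity) a Hodge isometry $\widetilde H(S,\alpha,\ZZ)\cong\widetilde H(\ka_X,\ZZ)$, so $x\in D_{\mathrm{K3'}}$, and then Proposition \ref{prop:DtwDd} forces $d$ to satisfy ($\ast\ast'$).

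\textbf{Main obstacle.} The delicate point, exactly as flagged in \cite{Add} for the untwisted case, is the passage between "Hodge isometry of lattices" and "birational equivalence of symplectic varieties," and in particular arranging that the relevant square-$2$ class in $\widetilde H(S,\alpha,\ZZ)$ really is (up to the action of the monodromy group and autoequivalences) the Mukai vector of an \emph{actual} moduli space of \emph{stable} twisted sheaves that is non-empty and of the expected dimension for a suitable choice of polarization — this requires the theory of twisted Mukai lattices, stability, and moduli of twisted sheaves (as in \cite{HuSt,HuSt2} and the twisted analogues of Yoshioka's results), together with care about signs and the two components of $D$. I expect the twisted-sheaf moduli theory to go through essentially verbatim, so the real work is the bookkeeping with the non-unimodular lattice $A_2^\perp$ and the embedding $A_2\subset\widetilde H(\ka_X,\ZZ)$: one must verify that the distinguished square-$2$ class needed to cut out $H^2(F(X),\ZZ)$ behaves correctly under the Hodge isometries produced by Lemma \ref{lem:UUn}, and that the Beauville--Donagi identification is compatible with all the markings fixed in Section \ref{sec:AX}.
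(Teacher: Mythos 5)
Your overall strategy is the same as the paper's: reduce birationality of $F(X)$ and a twisted moduli space to the existence of a Hodge isometry $\widetilde H(\ka_X,\ZZ)\cong\widetilde H(S,\alpha,\ZZ)$ compatible with the distinguished embeddings of $H^2$ into the Mukai lattice (Markman \cite{Mark} plus Addington's \cite[Cor.\ 8]{Add} identification $H^2(F(X),\ZZ)\cong\lambda_1^\perp$), and then invoke Proposition \ref{prop:DtwDd} to translate that into condition ($\ast$$\ast'$). Two concrete points, however. First, a small but real arithmetic slip: $(\lambda_1-\lambda_2)^2=2+2-2(\lambda_1.\lambda_2)=6$, not $2$; the distinguished square-$2$ class is simply $\lambda_1$ itself, and it is its orthogonal complement that Addington identifies with $H^2(F(X),\ZZ)$.

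Second, and more seriously, the step you defer to "twisted-sheaf moduli theory going through essentially verbatim" is precisely where the untwisted argument breaks. In the direction (i)$\Rightarrow$(ii) one must produce a \emph{non-empty} moduli space $M(v)$ for the Mukai vector $v\in\widetilde H^{1,1}(S,\alpha,\ZZ)$ that is the image of $\lambda_1$. Writing $v=(r,\ell,s)$, the case $r\ne0$ is handled by replacing $v$ by $-v$ if necessary, but for $r=0$ Addington's device — reflecting by the spherical class of $\ko_S$ to move to positive rank — is unavailable in the twisted setting, since $\ko_S$ is not an $\alpha$-twisted sheaf for nontrivial $\alpha$. The paper instead observes that for $r=0$ one has $(\ell)^2=(v)^2>0$, arranges $(\ell.H)>0$ after a sign change, and quotes Yoshioka's non-emptiness result \cite[Cor.\ 3.5]{Yosh2} (noting that rank-zero twisted sheaves may be regarded as untwisted). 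Without some such substitute your argument does not close in the rank-zero case; flagging the issue as an "obstacle" is the right instinct, but the resolution is a genuinely different argument rather than a verbatim transcription.
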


\begin{proof}
The argument is an adaptation of Addington's proof
\cite{Add}. Note however that in the twisted case the transcendental lattice cannot play the same role as in the untwisted
case. This was observed in \cite{HuSt}, where it was shown that twisted K3 surfaces $(S,\alpha)$, $(S',\alpha')$ with Hodge isometric transcendental
lattices, $T(S,\alpha)\cong T(S',\alpha')$, need not be derived equivalent.

Following Markman \cite{Mark} for every hyperk\"ahler manifold $Y$ deformation equivalent to $
{\rm Hilb}^2(S)$ of a K3 surface $S$ there exists a distinguished primitive embedding
$H^2(Y,\ZZ)\subset\widetilde\Lambda$ orthogonal to a vector $v\in\widetilde\Lambda$ with $(v.v)=2$. The Hodge structure of $H^2(Y,\ZZ)$ extends
to a Hodge structure on $\widetilde\Lambda$ such that $v$ is of type $(1,1)$. Moreover, $Y$ and $Y'$ are birational if and only if
there exists a Hodge isometry $H^2(Y,\ZZ)\cong H^2(Y',\ZZ)$ that extends to a Hodge isometry $\widetilde\Lambda\cong\widetilde\Lambda$. 
For a moduli space $M(v)$ of $\alpha$-twisted stable sheaves on a K3 surface $S$ with 
primitive $v\in \widetilde H^{1,1}(S,\alpha,\ZZ)$ such that $(v.v)=2$ the universal family
induces the distinguished embedding (see \cite[Thm.\ 3.19]{Yosh})
$$H^2(M(v),\ZZ)\cong v^\perp\,\hookrightarrow \widetilde H(S,\alpha,\ZZ).$$
Similarly, and this is the 
other crucial input, Addington shows in \cite[Cor.\ 8]{Add} that for the Fano variety of lines the universal family of lines
induces this distinguished embedding
$$H^2(F(X),\ZZ)\cong\lambda_1^\perp\,\hookrightarrow \widetilde H(\ka_X,\ZZ)\cong\widetilde\Lambda.$$
Hence, $F(X)$ and $M(v)$ are birational if and only if there exists a Hodge isometry
\begin{equation}\label{eqn:Ftw}
\widetilde H(\ka_X,\ZZ)\cong\widetilde H(S,\alpha,\ZZ)
\end{equation}
for some twisted K3 surface $(S,\alpha)$ that restricts to $H^2(F(X),\ZZ)\cong H^2(M(v),\ZZ)$.
Due to Proposition \ref{prop:DtwDd}, the existence of a Hodge isometry (\ref{eqn:Ftw}) is equivalent to $x\in D_d$ with $d$ satisfying ($\ast$$\ast'$).
This proves that ii) implies i).

Conversely, for a Hodge isometry (\ref{eqn:Ftw}) consider a primitive vector $v\in \widetilde H^{1,1}(S,\alpha,\ZZ)$ 
(the image of $\lambda_1$) in
the orthogonal complement of $H^2(F(X),\ZZ)\,\hookrightarrow\widetilde H(\ka_X,\ZZ)\cong \widetilde H^{2}(S,\alpha,\ZZ)$ and the
induced moduli space $M(v)$ of stable $\alpha$-twisted sheaves. Write $v=(r,\ell,s)$. If $r\ne0$, then for $v$ or $-v$ the moduli space $M(v)$
is indeed non-empty. For $r=0$ observe that $(v)^2>0$ and hence $(\ell)^2>0$. Again by passing to $-v$ if necessary, one can assume that
$(\ell.H)>0$ for the polarization $H$. That the moduli space is non-empty in this case was shown in \cite[Cor.\ 3.5]{Yosh2}. (Note that
for $r=0$ twisted sheaves can also be considered as untwisted ones.)
In \cite{Add} the case $r=0$ is dealt with by a reflection associated with $\ko$, which
does not work in the twisted situation. 

To conclude, compose the Hodge isometry $H^2(F(X),\ZZ)\cong v^\perp$, given by the choice of $v$, with $H^2(M(v),\ZZ)\cong v^\perp$,
induced by the universal family as above. By construction, it extends to a Hodge isometry $\widetilde\Lambda\cong\widetilde\Lambda$ and, therefore,
$F(X)$ and $M(v)$ are birational.
\end{proof}

\section{Deformation theory}\label{sec:defo}
This section contains  two results on the deformation theory of equivalences $\Db(S,\alpha)\cong\ka_X$ resp.\
$\ka_{X'}\cong\ka_X$ that are crucial for the main results of the paper. The
techniques have been developed by Toda \cite{Toda}, Huybrechts--Macr\`i--Stellari \cite{HMS},
Huybrechts--Thomas \cite{HT}, and in the present setting by Addington--Thomas \cite{AT}. 
We follow \cite{AT} quite closely and often only indicate the additional difficulties and how to deal with them.
\subsection{} We first consider FM-equivalences $\ka_{X'}\cong\ka_X$ between the K3 categories of two cubics $X$ and $X'$
and study under which condition they deform sideways with $X$ and $X'$.

\begin{thm}\label{thm:Defo1}
Consider two families of smooth cubics $\kx,\kx'\to T$ over a smooth base $T$  and with distinguished fibres
$X\coloneqq \kx_0$ and $X'\coloneqq \kx'_0$, respectively. Assume $\Phi\colon\ka_{X'}\congpf\ka_X$ is a FM-equivalence
inducing a Hodge isometry $\varphi\colon\widetilde H(\ka_{X'},\ZZ)\congpf\widetilde H(\ka_X,\ZZ)$ that remains
a Hodge isometry $\varphi_t\colon\widetilde H(\ka_{\kx_t'},\ZZ)\congpf \widetilde H(\ka_{\kx_t},\ZZ)$ under parallel
transport  for all $t\in T$. 

Then $\Phi$ deforms sideways to FM-equivalences $\Phi_t\colon\ka_{\kx_t'}\congpf\ka_{\kx_t}$
for all $t$ in a Zariski open neighbourhood $0\in U\subset T$.
\end{thm}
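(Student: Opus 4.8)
The strategy is the standard deformation-of-kernels argument, adapted from \cite{AT} (which in turn follows \cite{HMS,HT,Toda}), so I will only describe the shape of it and isolate the new ingredient. Write $\Phi\cong\Phi_{\ke}$ for a Fourier--Mukai kernel $\ke\in\Db(X'\times X)$, meaning that the composite $\Db(X')\to\ka_{X'}\xrightarrow{\sim}\ka_X\hookrightarrow\Db(X)$ has kernel $\ke$. The first step is to run obstruction theory for deforming $\ke$ over the (formal, then analytic) base $T$: the obstruction to deforming $\ke$ to first order over $T$ lives in a $\Hom$-group on $X'\times X$, and the obstruction classes are governed by the Kodaira--Spencer classes of $\kx'\to T$ and $\kx\to T$ together with the action of $\Phi$ on Hochschild cohomology. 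Exactly as in \cite[Sec.\ 5]{AT}, the vanishing of these obstructions is equivalent to the statement that $\Phi$ intertwines the two Kodaira--Spencer classes, i.e.\ that the first-order deformations of $X$ and $X'$ are matched under $\Phi$; and this intertwining is precisely encoded by the hypothesis that the Hodge isometry $\varphi$ stays a Hodge isometry $\varphi_t$ under parallel transport. So one deforms $\ke$ to a kernel $\widetilde\ke$ over the formal neighbourhood of $0$, and then over an analytic neighbourhood by Artin approximation / the existence of a versal deformation, obtaining $\ke_t\in\Db(\kx'_t\times\kx_t)$ for $t$ in an analytic (hence, after shrinking, Zariski open) neighbourhood $0\in U\subset T$.

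The second step is to check that the deformed kernel $\ke_t$ still defines a functor landing in $\ka_{\kx_t}$ and killing ${}^\perp\!\ka_{\kx'_t}$, i.e.\ that $\Phi_{\ke_t}$ descends to a functor $\ka_{\kx'_t}\to\ka_{\kx_t}$. This is where one uses that the semi-orthogonal decomposition $\Db(\kx_t)=\langle\ka_{\kx_t},{}^\perp\!\ka_{\kx_t}\rangle$ with ${}^\perp\!\ka_{\kx_t}=\langle\ko,\ko(1),\ko(2)\rangle$ is locally constant in the family: the three line bundles deform tautologically, their mutual $\Ext$-groups are rigid, and the projection/inclusion functors $i^*,i^!,i_*$ deform, so the condition of being supported in $\ka$ is closed-and-open in $t$ and holds at $t=0$ by hypothesis. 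One then argues that $\Phi_{\ke_t}$ is still fully faithful and an equivalence: by \cite[Thm.\ 5.5]{HT}-type semicontinuity, the locus of $t$ where the natural transformations witnessing full faithfulness (the unit and counit of adjunction, via the cone criterion of Bridgeland--Maciocia / Bondal--Orlov) are isomorphisms is open, and it is nonempty because it contains $0$. Since a fully faithful FM-functor between the K3 categories of two cubics whose numerical Grothendieck groups are abstractly isometric is automatically an equivalence (the induced map on $\widetilde H(\ka,\ZZ)$ is the isometry $\varphi_t$, hence an isomorphism, which forces essential surjectivity by the standard spanning-class argument), we conclude $\Phi_t\colon\ka_{\kx'_t}\xrightarrow{\sim}\ka_{\kx_t}$ for all $t\in U$, and by construction $\Phi_0=\Phi$.

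The main obstacle is the obstruction-vanishing step: one must correctly identify the obstruction space and show that the class of the obstruction to deforming $\ke$ is the difference of the (images under $\Phi$ of the) Kodaira--Spencer classes, so that the parallel-transport hypothesis on $\varphi$ makes it vanish. This requires care because $\ka_X$ is only a piece of $\Db(X)$: the Hochschild cohomology controlling deformations of $X$ sits inside that of $\ka_X$ (by Kuznetsov's additivity of Hochschild homology/cohomology for admissible subcategories), and one has to check that the relevant HKR-type decomposition identifies the $H^1(X,T_X)$-component of $\HoH^2$ with the genuine geometric deformations of the cubic, and that $\Phi$ acts on this component via $\varphi$ on $\widetilde H(\ka_X,\ZZ)$ in the way dictated by the cycle-class/Mukai-vector compatibility. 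In \cite{AT} this is done for $\Db(S)$ in place of $\ka_{X'}$; here both sides are cubic K3 categories, but the bookkeeping is the same once one works inside $\Db(X'\times X)$ and restricts at the end — this is the one place where I would have to write the diagram chase carefully rather than quote a black box. The remaining steps (openness of full faithfulness, local constancy of the semi-orthogonal decomposition, passage from formal to Zariski-open) are routine applications of results already cited.
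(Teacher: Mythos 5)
Your proposal follows essentially the same route as the paper: pass to the Fourier--Mukai kernel $P\in\Db(X'\times X)$ of the composite $\Db(X')\to\ka_{X'}\congpf\ka_X\hookrightarrow\Db(X)$, deform $P$ via Toda/Huybrechts--Thomas obstruction theory with the obstruction killed by matching Kodaira--Spencer classes, and observe that both the factorization through $\ka_{\kx_t}$ and the property of being an equivalence are Zariski open conditions on $t$. The one step you explicitly defer --- verifying that the hypothesis on parallel transport of the integral Hodge isometry really forces $\kappa_{X'}$ and $\kappa_X$ to have the same image in $\Ext^2_{X'\times X}(P,P)\cong\HoH^2(\ka_X)$ --- is in fact the entire substance of the paper's argument, so be aware that it is not mere bookkeeping: since there is no direct comparison in $\HoH^2$, the paper contracts against the generators $\sigma_{X'},\sigma_X$ of the one-dimensional spaces $\HoH_2(X')$, $\HoH_2(X)$, uses the Macr\`i--Stellari compatibility of $\Phi_P^{\HoH_*}$ with $\Phi_P^H$ on $\HoH_0$ together with the first-order form of the Hodge-isometry hypothesis to get commutativity on $\HoH_0$, and then concludes by the injectivity of the contraction $\HoH^2(\ka_X)\hookrightarrow\HoH_0(\ka_X)$ (as for K3 surfaces); uniqueness of the deformation comes from $\HoH^1(\ka_X)=0$. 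With that chase supplied, your outline matches the paper's proof.
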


\begin{proof}
The argument is a variant of the deformation theory in \cite{AT}. We only indicate the necessary modifications.

As by assumption $\Phi$ is a FM-equivalence, the composition $$\xymatrix{\Phi_P\colon\Db(X')\ar[r]&\ka_{X'}\ar[r]^-\sim_\Phi&\ka_X
\ar@{^(->}[r]&\Db(X)}$$
is a FM-functor with some kernel $P\in\Db(X'\times X)$ contained in $\ka_{X'}(-2)\boxtimes\ka_X$. It suffices to show that $P$ deforms to $P_t\in\Db(\kx_t'\times\kx_t)$
for $t$ in some open neighbourhood $0\in U\subset T$, because the conditions for $\Phi_{P_t}$ to factorize
via a  functor $\Phi_t\colon\ka_{\kx_t'}\to\ka_{\kx_t}$ and for this functor $\Phi_t$ to define an equivalence are both Zariski open.
Indeed, $\Phi_t$ takes values in $\ka_{\kx_t}$ if and only if its composition with the projection 
$\Db(\kx_t)\to ^\perp\!\ka_{\kx_t}=\langle\ko_{\kx_t},\ko_{\kx_t}(1),\ko_{\kx_t}(2)\rangle$ is trivial. The composition, however, is again of FM-type and the vanishing of a FM-kernel is a Zariski open condition. Similarly, whether $\Phi_t$ is an equivalence can be detected by composing it with
its adjoints and then checking whether the natural map to the kernel of the identity is an isomorphism, again a Zariski open condition.

The crucial part is to understand the first order deformations, the higher order obstructions are dealt with by the $T^1$-lifting
property, see \cite[Sec.\ 7.2]{AT} and \cite[Sec.\ 3.2]{HMS}.
First note that by results of Kuznetsov \cite{Kuz2} one has 
$$\HoH^*(\ka_{X'})\cong\Ext^*_{X'\times X}(P,P)\cong\HoH^*(\ka_X).$$
This allows one to compare the first order deformations $$\kappa_{X'}\in H^1(T_{X'})\subset\HoH^2(X')
\text{ and } \kappa_X\in H^1(T_X)\subset\HoH^2(X)$$
corresponding to some tangent vector $v\in T_0T$ of $T$ at $0$. Due to a result of Toda \cite{Toda} (cf.\ \cite[Thm.\ 7.1]{AT}) it suffices to show that
under $\HoH^2(X')\to\Ext^2_{X'\times X}(P,P)$ resp.\ $\HoH^2(X)\to\Ext^2_{X'\times X}(P,P)$ the classes
$\kappa_{X'}$ and $\kappa_X$ are mapped to the same class.
For this consider the following diagram (cf.\ \cite[Prop.\ 6.2]{AT})
$$\xymatrix{\HoH_2(X')\ar @{} [dr] |{{\rm (1)}}\ar[d]_{\kappa_{X'}}\ar[r]^-\sim&\HoH_2(\ka_{X'})\ar @{} [dr] |{{\rm (2)}}\ar[d]^{\alpha}\ar[r]_{\Phi^{\HoH_*}_P}^-\sim&\HoH_2(\ka_X)\ar[d]^{\bar\kappa_X}\ar[r]^-\sim\ar @{} [dr] |{{\rm (3)}}&\HoH_2(X)\ar[d]^{\kappa_X}\\
\HoH_0(X')\ar @{} [drrr] |{{\rm (4)}}\ar[d]\ar@{->>}[r]&\HoH_0(\ka_{X'})\ar[r]_{\Phi^{\HoH_*}_P}^-\sim&\HoH_0(\ka_X)\ar@{^(->}[r]&\HoH_0(X)\ar[d]\\
H^*(X')\ar[dr]\ar[rrr]_{\Phi_P^{H}}&&&H^*(X)\\
&\widetilde H^*(\ka_{X'})\ar[r]^-\sim_\varphi&\widetilde H^*(\ka_X)\ar[ur]&}$$
By $ H^*(X)\cong\HoH_*(X)$ we denote the HKR-isomorphism (see \cite{Cald}) post-composed with $\sqrt{\rm td}\wedge(~)$ and, so in particular, $\HoH_2(X)\cong H^1(\Omega_X^3)$ with chosen generator $\sigma_X$. Similarly for $X'$, where we choose the generator
$\sigma_{X'}\in H^1(\Omega_{X'}^3)\cong\HoH_2(X')$ such that its image yields $\sigma_X$.
Furthermore, $\bar\kappa_X$ denotes the image of $\kappa_X$ under the projection $\HoH^2(X)\to\HoH^2(\ka_X)$, see \cite{Kuz2},
and $\alpha\coloneqq \Phi^{\HoH^*}(\bar\kappa_X)$.

We aim at showing that (1) is commutative. For this note first that  (4) is induced by
the FM-transform $\Phi_P\colon\Db(X')\to\Db(X)$ and hence commutative due to \cite{MS}. The commutativity of (2) is obvious,
as Hochschild (co)homology is respected by equivalences, and commutativity of (3) is the analogue of \cite[Prop.\ 6.1]{AT}.
(Recall that $\Phi_P$ does not necessarily induce a map $\Phi_P^{\HoH^*}$, as it is not fully faithful.)

The first order version of the  assumption on the Hodge isometry $\varphi$ is the statement that the diagram
$$\xymatrix{
H^1(T_{X'})\ar@{^(->}[d]^{\sigma_{X'}}&T_{0}T\ar[r]\ar[l]&H^1(T_X)\ar@{^(->}[d]^{\sigma_X}\\
H^{2,2}(X')_{\rm prim}\ar@{^(->}[d]&&H^{2,2}(X)_{\rm prim}\ar@{^(->}[d]\\
\widetilde H^{1,1}(\ka_{X'})\ar[rr]_\varphi^-\sim&&\widetilde H^{1,1}(\ka_{X})
}$$
is commutative.
Using the ring-module isomorphism
$(\HoH^*,\HoH_*)\cong(H^*(\bigwedge^*T),H^*(\Omega^*))$ for $X'$, this implies
that the image in $H^*(X')$ of $\sigma_{X'}\in \HoH_2(X')$ under contraction with $\kappa_{X'}$ 
is mapped to the image of $\sigma_{X}$ under contraction with $\kappa_X$. As $\HoH_2(X')$ is one-dimensional,
this shows that also  (1) is commutative.

Therefore,  in  the diagram
$$\xymatrix{\HoH^2(X')\ar[r]\ar[d]_{\sigma_{X'}}&\HoH^2(\ka_{X'})\ar@{^(->}[d]^{\sigma_{\ka_{X'}}}\ar[r]^-\sim\ar @{} [dr] |{\circlearrowleft}&\HoH^2(\ka_X)
\ar@{^(->}[d]^{\sigma_{\ka_{X}}}\\
\HoH_0(X')\ar[r]&\HoH_0(\ka_{X'})\ar[r]^-\sim&\HoH_0(\ka_X)}$$
the image of $\kappa_{X'}\in\HoH^2(X')$ under the two compositions $\HoH^2(X')\to\HoH_0(\ka_{X'})\cong \HoH_0(\ka_X)$ coincide.
As the contraction $\HoH^2(\ka)\,\hookrightarrow \HoH_0(\ka)$ is injective (as for K3 surfaces), this implies
that the image of $\kappa_{X'}$ under $\HoH^2(X')\to \HoH^2(\ka_X)$ is indeed $\bar\kappa_X$ as claimed.

As in \cite{AT}, the deformation of $P$ to first and then, by $T^1$-lifting property, to higher order is unique,
for $\Ext^1_{X'\times X}(P,P)\cong\HoH^1(\ka_{X})=0$ by \cite{Kuz2}.
\end{proof}

\subsection{} We now come to the more involved situation of equivalences $\Db(S,\alpha)\cong\ka_X$ and their deformations.

\begin{thm}\label{thm:Defo2}
Consider two families $\kx,\ks\to T$ of smooth cubics and K3 surfaces, respectively, over a smooth base $T$.
Denote the distinguished fibres by
$X\coloneqq \kx_0$, $S\coloneqq \ks_0$ and let $\alpha_t\in{\rm Br}(\ks_t)$ be a deformation
of a Brauer class $\alpha\coloneqq\alpha_0$ on $S$.
Assume $\Phi\colon\Db(S,\alpha)\congpf\ka_X$ is an equivalence
inducing a Hodge isometry $\varphi\colon\widetilde H(S,\alpha,\ZZ)\congpf\widetilde H(\ka_X,\ZZ)$ that remains
a Hodge isometry $\varphi_t\colon\widetilde H(\ks_t,\alpha_t,\ZZ)\congpf \widetilde H(\ka_{\kx_t},\ZZ)$ under parallel
transport  for all $t\in T$. 

Then $\Phi$ deforms sideways to equivalences $\Phi_t\colon\Db(\ks_t,\alpha_t)\congpf\ka_{\kx_t}$
for all $t$ in a Zariski open neighbourhood $0\in U\subset T$.
\end{thm}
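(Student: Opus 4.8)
The plan is to mimic the proof of Theorem~\ref{thm:Defo1} and the deformation theory of Addington--Thomas \cite[Sec.~7]{AT}, replacing $\Db(S)$ by $\Db(S,\alpha)$ throughout. Since $\Phi\colon\Db(S,\alpha)\congpf\ka_X$ is an equivalence, the composition $\Db(S,\alpha)\to\ka_X\hookrightarrow\Db(X)$ is of Fourier--Mukai type by Canonaco--Stellari \cite{CanSte} (as in the proof of Proposition~\ref{prop:indHodge}), so it has a kernel $P\in\Db(S\times X,\alpha^{-1}\boxtimes 1)$ lying in the admissible subcategory corresponding to $\Db(S,\alpha^{-1})\boxtimes\ka_X$. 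As in Theorem~\ref{thm:Defo1} it suffices to deform $P$ to objects $P_t\in\Db(\ks_t\times\kx_t,\alpha_t^{-1}\boxtimes 1)$ over a Zariski neighbourhood of $0\in T$: the ambient twist $\alpha_t^{-1}\boxtimes 1$ already varies with the given families, and the conditions for $\Phi_{P_t}$ to factor through $\ka_{\kx_t}$ and then to be an equivalence are both Zariski open, being the vanishing, respectively isomorphy, of Fourier--Mukai kernels obtained by composing with projection functors and adjoints.

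For the first-order deformations, combine Kuznetsov's isomorphism $\HoH^*(\ka_X)\cong\Ext^*_{S\times X}(P,P)$ \cite{Kuz2} with the fact that the equivalence $\Phi$ identifies $\HoH^*(S,\alpha)$ with $\HoH^*(\ka_X)$, to obtain $\HoH^*(S,\alpha)\cong\Ext^*_{S\times X}(P,P)\cong\HoH^*(\ka_X)$, and likewise in homology. A tangent vector $v\in T_0T$ yields a first-order class $\kappa_{(S,\alpha)}\in\HoH^2(S,\alpha)$ of the twisted K3 surface and $\kappa_X\in H^1(T_X)\subset\HoH^2(X)$ with image $\bar\kappa_X\in\HoH^2(\ka_X)$ under Kuznetsov's projection. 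By Toda's criterion \cite{Toda} (cf.~\cite[Thm.~7.1]{AT}), $P$ deforms in the direction $v$ exactly when $\kappa_{(S,\alpha)}$ and $\bar\kappa_X$ have the same image in $\Ext^2_{S\times X}(P,P)$; once first-order deformations exist, the higher-order obstructions vanish by the $T^1$-lifting property \cite[Sec.~3.2]{HMS}, \cite{HT}, \cite[Sec.~7.2]{AT}, and the deformation is unique since $\Ext^1_{S\times X}(P,P)\cong\HoH^1(\ka_X)=0$.

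To verify the first-order condition I would reproduce the diagram in the proof of Theorem~\ref{thm:Defo1} (following \cite[Prop.~6.2]{AT}), using on the cubic side the HKR isomorphism of \cite{Cald} post-composed with $\sqrt{{\rm td}}\wedge(-)$, with $\HoH_2(X)\cong H^1(\Omega^3_X)$ and generator $\sigma_X$, and on the K3 side the \emph{twisted} HKR isomorphism together with the twisted Chern character ${\rm ch}^{B}$ of \cite{HuSt}, which identifies $\HoH_*(S,\alpha)$ with $\widetilde H(S,\alpha,\ZZ)\otimes\CC$ as Hodge structures, compatibly with $\Phi$ and with Fourier--Mukai transforms. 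Commutativity of the Hochschild squares under the equivalence, the analogue of \cite[Prop.~6.1]{AT} on the cubic side, and the contraction-with-$\kappa$ squares then reduce the claim to the first-order shadow of the hypothesis, namely that the square relating the $(1,1)$-part of $\HoH^2(S,\alpha)$ to $H^{2,2}(X,\ZZ)_{\rm prim}$ via $\sigma_S$, $\sigma_X$ and $\varphi$ commutes --- which is precisely what parallel transport of $\varphi$ gives. Since $\HoH_2$ is one-dimensional on both sides and the contraction $\HoH^2(\ka)\hookrightarrow\HoH_0(\ka)$ is injective, this forces $\kappa_{(S,\alpha)}$ and $\bar\kappa_X$ to agree in $\Ext^2_{S\times X}(P,P)$, completing the argument.

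The main obstacle, and the only genuinely new point relative to \cite{AT}, is the twisted bookkeeping. One must arrange that $\Db(\ks_t,\alpha_t)$ fits into a family at all, i.e.\ that the Brauer classes $\alpha_t$ lift to $B$-fields $B_t\in H^2(\ks_t,\QQ)$ varying with $t$, so that the twisted kernels and their twisted Chern characters ${\rm ch}^{-\alpha_t\boxtimes 1}$ form a family and the identifications above depend on $B_t$ in a controlled way; and one must check that the first-order class $\kappa_{(S,\alpha)}$ records both the complex-structure derivative in $H^1(T_S)$ and the derivative of the $B$-field in $H^2(\ko_S)$, and that both pieces are matched under $\varphi$ with the corresponding parts of $\kappa_X$ --- the parallel-transport hypothesis, which moves the $(2,0)$-line, being exactly what pins down the $B$-field direction. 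Checking that the twisted HKR isomorphism is compatible with Fourier--Mukai kernels and with the Hochschild structures exactly as in the untwisted case, so that the large diagram commutes, is the delicate technical step; the rest is a faithful transcription of \cite{AT} and of the proof of Theorem~\ref{thm:Defo1}.
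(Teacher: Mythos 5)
Your skeleton is the right one (Toda's first-order criterion, the Hochschild comparison diagram, $T^1$-lifting, openness of the factorization and equivalence conditions), and you correctly locate the difficulty in the twisted bookkeeping. But at exactly that point your proposal stops at ``this is the delicate technical step'' without a method, and this is a genuine gap rather than a routine verification. The paper explicitly notes that one \emph{could} try to rewrite the untwisted foundations (C\u{a}ld\u{a}raru's HKR compatibility \cite{Cald}, Macr\`i--Stellari's theorem on Fourier--Mukai transforms and Hochschild homology \cite{MS}) in the twisted setting, but does not do so because those twisted statements are not in the literature; instead it introduces the key idea your proposal is missing: Yoshioka's reduction to the untwisted case via the Brauer--Severi variety $\pi\colon Y=\PP(E)\to S$ attached to a locally free $\alpha$-twisted sheaf $E$. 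Replacing the twisted kernel $P$ by the untwisted $\tilde P=\pi^*P\otimes(L^*\boxtimes\ko)$ on $Y\times X$ and factoring $\Phi_P=\Phi_{\tilde P}\circ\Phi_Q$ lets the paper prove commutativity of the crucial diagram (\ref{eqn:remcom}) by quoting \cite[Thm.\ 1.2]{MS} for the untwisted piece and handling the untwisting functor separately via Lemma \ref{lem:untwHKR} (a universal Atiyah class computation).

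The same gap recurs at the obstruction step: you propose to deform the twisted kernel $P$ directly and invoke Toda's criterion, but the Huybrechts--Thomas formula $o(P)=(\kappa,\kappa')\circ{\rm At}(P)$ underlying \cite[Thm.\ 7.1]{AT} is, as the paper stresses, not available for twisted kernels. The paper instead deforms the untwisted $\tilde P$ on $Y\times X$, where the formula applies, and must then deal with the added subtlety that there is no natural map $\HoH^2(X)\to\HoH^2(Y)$ --- only the images of $\kappa_X$ and $\kappa_Y$ in $\Ext^2(\tilde P,\tilde P)\cong\HoH^2(S,\alpha)$ can be compared. This also forces the auxiliary choice of a family $E_t$ of twisted sheaves over $T$ (to deform $Y$ along with $S$), and the openness of the condition cutting out $\Db((Y\times X)/(S\times X))\subset\Db(Y\times X)$ to transfer deformations of $\tilde P$ back to $P$; the twisted analogue of Lieblich's Artin-stack result needed for algebraization is likewise obtained only through this untwisting. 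None of these steps is a transcription of \cite{AT}; they constitute the actual content of the proof, and your proposal as written does not supply them.
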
 

\begin{proof} Let us fix representatives $\alpha_t=\{\alpha_{t,ijk}\}$ for the Brauer classes on $\ks_t$
 and  a family $E_t$ of locally free $\{\alpha_{t,ijk}\}$-twisted sheaves on the fibres $\ks_t$ in a Zariski
 open  neighbourhood of $0\in U\subset T$.

The proof now consists of copying  \cite[Sec.\ 6, 7]{AT}. However, the techniques have to be adapted to the
twisted case, which sometimes causes additional problems as certain fundamental issues
related to Hochschild (co)homology have not been addressed in the twisted setting.
For certain parts we choose ad hoc arguments to reduce to the untwisted case, for others we rely
on Reinecke  \cite{Reinecke}. 

Section 6 in \cite{AT} deals with Hochschild (co)homology. For a twisted variety
$(Z,\alpha)$ one defines
$\HoH^n(Z,\alpha)\coloneqq \Ext^n_{(Z,\alpha^{-1})\times(Z,\alpha)}(\ko_\Delta,\ko_\Delta)$. Here,
$(Z,\alpha^{-1})\times(Z,\alpha)$ denotes the twisted variety $(Z\times Z,\alpha^{-1}\boxtimes \alpha)$.
Note that $\ko_\Delta$ is indeed an $(\alpha^{-1}\boxtimes\alpha)$-twisted sheaf.
Similarly, one defines $\HoH_n(Z,\alpha)\coloneqq \Ext^{d-n}_{(Z,\alpha^{-1})\times(Z,\alpha)}(\Delta_*\omega_Z^{-1},\ko_\Delta)$, where $d=\dim(Z)$.
Composition makes $\HoH_*(Z,\alpha)$ a right $\HoH^*(Z,\alpha)$-module. Moreover, there are natural isomorphisms
\begin{eqnarray*}
\HoH^n(Z,\alpha)= \Ext^n_{(Z,\alpha^{-1})\times(Z,\alpha)}(\ko_\Delta,\ko_\Delta)&\cong&\Ext^n_Z(\Delta^*\ko_\Delta,\ko_Z)\\
&\cong&\Ext^n_{Z\times Z}(\ko_\Delta,\ko_\Delta)=\HoH^n(Z)
\end{eqnarray*}
and
\begin{eqnarray*}\HoH_n(Z,\alpha)= \Ext^{d-n}_{(Z,\alpha^{-1})\times(Z,\alpha)}(\Delta_*\omega_Z^{-1},\ko_\Delta)
&\cong& \Ext^{d-n}_Z(\ko_Z,\Delta^*\ko_\Delta)\\
& \cong&\Ext^{d-n}_{Z\times Z}(\Delta_*\omega_Z^{-1},\ko_\Delta)
=\HoH_n(Z).
\end{eqnarray*}
In particular, the HKR-isomorphisms post-composed with ${\rm td}^{-1/2}\llrcorner(~)$
resp.\ ${\rm td}^{1/2}\wedge(~)$
yield isomorphisms $$I\colon\HoH^n(Z,\alpha)\congpf\bigoplus_{i+j=n} H^i(\Lambda^j T_Z)\text{ and } I\colon\HoH_n(Z,\alpha)\congpf\bigoplus_{j-i=n}H^i(\Omega_Z^j).$$
Note that these isomorphisms are again compatible with the ring and module structures on both sides, which follows from the fact that the
isomorphisms $\HoH^*(Z,\alpha)\cong\HoH^*(Z)$ and $\HoH_*(Z,\alpha)\cong \HoH_*(Z)$ are. The latter  is a consequence
of the functoriality properties
of $\Delta_!$, $\Delta_*$ and $\Delta^*$. 

For  a twisted K3 surface $(S,\alpha)$ one has $\HoH_2(S,\alpha)\cong H^0(\omega_S)=\CC\cdot\sigma_S$ and
the following diagram commutes
$$\xymatrix{\HoH^2(S,\alpha)\ar[d]^{\cdot\sigma_S}\ar[r]_-I^-\sim& H^0(\bigwedge^2T_S)\oplus H^1(T_S)\ar[d]^{\llrcorner\sigma_S}\oplus H^2(\ko_S)\\\HoH_0(S,\alpha)\ar[r]_-I^-\sim&
H^{0,0}\oplus H^{1,1}(S)\oplus H^{2,2}(S).}$$

Let us  now consider the  fully faithful functor $\Phi_P\colon\Db(S,\alpha)\congpf\ka_X\,\hookrightarrow\Db(X)$ between the twisted K3 surface $(S,\alpha)$ and the smooth cubic $X$,
where $P\in\Db((S,\alpha^{-1})\times X)$.
Then as in \cite[Sec.\ 6.1]{AT} one obtains  natural maps
$$\Phi_P^{\HoH^*}\colon\HoH^*(X)\to\HoH^*(S,\alpha)\text{ and }\Phi_P^{\HoH_*}\colon \HoH_*(S,\alpha)\to \HoH_*(X)$$
compatible with the module structures, i.e.\ $\Phi_P^{\HoH_*}(a)\circ c=\Phi_P^{\HoH_*}(a\circ\Phi_P^{\HoH^*}(c))$
for all $a\in\HoH_*(S,\alpha)$ and $c\in\HoH^*(X)$.
This has been checked by Reinecke in \cite[Sec.\ 4]{Reinecke}.

The remaining input in the proof of \cite[Prop.\ 6.2]{AT} is the commutativity of the untwisted version of the following diagram:
\begin{equation}\label{eqn:remcom}
\xymatrix{\HoH_0(S,\alpha)\ar[d]_{I^B}^-\wr\ar[rr]^{\Phi_P^{\HoH_*}}&&\HoH_0(X)\ar[d]^I_-\wr\\
\bigoplus H^{p,p}(S)\ar[rr]^{\Phi_P^{H,B}}&&\bigoplus H^{p,p}(X)}
\end{equation}
Note that defining the induced action on cohomology requires the lift of $\alpha$ to a class $B\in H^2(S,\QQ)$, see \cite{HuyInt,HuSt}. Moreover, the usual
HKR isomorphism $I$ post-composed with ${\rm td}^{1/2}\wedge(~)$ needs to be twisted further to $I^B\coloneqq\exp(B)\circ I$.

In principle, one could try to prove the commutativity of (\ref{eqn:remcom}) 
by rewriting the existing untwisted theory, in particular \cite{Cald,MS}, for the twisted situation. Instead, we
follow Yoshioka \cite{Yosh} and reduce  everything  to the untwisted case
by pulling back to a Brauer--Severi variety. We briefly review his approach and explain
how to apply it to our situation.

Following \cite{Yosh} we pick a locally free $\alpha=\{\alpha_{ijk}\}$-twisted
sheaf $E=\{E_i,\varphi_{ij}\}$ on a twisted variety $(Z,\alpha)$ and associate to it the projective bundle $\pi\colon Y\coloneqq\PP(E)\to Z$,
which naturally comes with a $\pi^*\alpha^{-1}$-twisted line bundle $L\coloneqq\ko_\pi(1)$. The pull-back of any $\alpha$-twisted sheaf $F=\{F_i,\psi_{ij}\}$
tensored with $L$ then naturally leads to the untwisted sheaf $\tilde F\coloneqq \pi^*F\otimes L$.
Analogously, any $\alpha^{-1}$-twisted sheaf $F$ can be turned into the untwisted sheaf
$\pi^*F\otimes L^*$.
The construction yields a functor $\Db(Z,\alpha)\to \Db(Y )$ which  in fact defines an equivalence of $\Db(Z,\alpha)$
with a full subcategory 
$$\tilde{(~)}\colon\Db(Z,\alpha)\congpf\Db(Y/Z)\subset\Db(Y).$$ 

The construction applied to $E$ itself  yields the sheaf $G\coloneqq \tilde E$ that corresponds to the unique non-trivial extension class in $\Ext^1_{Y}
(\kt_\pi,\ko_Y)$ and $\Db(Y/Z)\subset\Db(Y)$ can alternatively be described as the full subcategory of all objects $H$ for which the adjunction map
$\pi^*\pi_*(G^*\otimes H)\to G^*\otimes H$ is an isomorphism. Analogously, $\Db(Z,\alpha^{-1})$ is equivalent to the full subcategory of objects
$H$ for which $\pi^*\pi_*(G\otimes H)\congpf G\otimes H$.

We apply this construction  to the twisted K3 surface $(S,\alpha)$ and consider $Y=\PP(E)\to S$ as above. 
Assume $\alpha$ is of order $r$ and choose a lift $B=(1/r)B_0$ with $B_0\in H^2(S,\ZZ)$ of it.
The FM-kernel of our given equivalence $\Phi_P\colon\Db(S,\alpha)\congpf\ka_X\subset\Db(X)$, which is an object in $\Db((S,\alpha^{-1})\times X)$,
is turned into the untwisted sheaf $\tilde P\coloneqq\pi^*P\otimes(L^*\boxtimes\ko)$ on $Y\times X$.
This leads to the commutative diagram

$$\xymatrix{\Db(S,\alpha)\ar[d]^{\tilde{(~)}}\ar[r]^-{\pi_1^*}& \Db((S,\alpha)\times X)\ar[d]^{\tilde{(~)}}\ar[r]^{\otimes P}&\Db(S\times X)\ar[d]^-{\pi^*}\ar[r]^{\pi_{2*}}&\Db(X)\ar[d]^=\\
\Db(Y/S)\ar[r]^-{\pi_1^*}&                                          \Db((Y\times X)/(S\times X))\ar[r]^{\otimes \tilde P}&              \Db((Y\times X)/(S\times X))\ar[r]^-{\pi_{2*}}&\Db(X).
}$$
Therefore, the FM-functor $\Phi_P\colon\Db(S,\alpha)\congpf\ka_X\subset\Db(X)$ can be written as the composition $\Phi_P=
\Phi_{\tilde P}\circ\Phi_Q$ of a twisted FM-functor $\Phi_{Q}\coloneqq \tilde{(~)}$, with $Q= (\ko_S\boxtimes L)|_{\Gamma_\pi}$,
and an untwisted FM-functor $\Phi_{\tilde P}$:
\begin{equation}\label{eqn:compFM}
\xymatrix{\Phi_P\colon \Db(S,\alpha)\ar[r]^-{\Phi_Q}&\Db(Y)\ar[r]^-{\Phi_{\tilde P}}&\Db(X).}
\end{equation}
This allows one to decompose the diagram
(\ref{eqn:remcom}) as 
\begin{equation}\label{eqn:remcom2}
\xymatrix{\HoH_0(S,\alpha)\ar[d]_{I^B}^-\wr\ar[r]^{\Phi_Q^{\HoH_*}}&\HoH_0(Y)\ar[r]^{\Phi_{\tilde P}^{\HoH_*}}\ar[d]^-\wr&\HoH_0(X)\ar[d]^-\wr\\
\bigoplus H^{p,p}(S)\ar[r]^{\Phi_Q^{H,B}}&\bigoplus H^{p,p}(Y)\ar[r]^{\Phi_{\tilde P}^{H}}&\bigoplus H^{p,p}(X).}
\end{equation}
The right hand square is  induced by the usual untwisted FM-functor $\Phi_{\tilde P}$ and its commutativity
therefore follows from the result of Macr\`i and Stellari \cite[Thm.\ 1.2]{MS}. Hence, it suffices to prove the commutativity of the 
left hand square (which does not involve the cubic $X$ anymore). For greater clarity we split this further by decomposing $\Phi_Q$ as
$$\xymatrix{\Phi_Q\colon\Db(S,\alpha)\ar[r]^-{\pi^*}&\Db(Y,\pi^*\alpha)\ar[r]^-{L\otimes}&\Db(Y).}$$
Let us first consider $\pi^*\colon\Db(S,\alpha)\to\Db(Y,\pi^*\alpha)$ and the induced diagram
$$\xymatrix{\HoH_0(S,\alpha)\ar@/_3pc/[ddd]_{I^B}\ar[d]^-\wr\ar[r]&\HoH_0(Y,\pi^*\alpha)\ar@/^3pc/[ddd]^{I^{\pi^*B}}\ar[d]^-\wr\\
\HoH_0(S)\ar[d]_I\ar[r]&\HoH_0(Y)\ar[d]^I\\
H^*(S)\ar[d]^{\exp(B)}\ar[r]^{\sqrt{\rm td_\pi}\cdot\pi^*}&H^*(Y)\ar[d]_{\exp(\pi^*B)}\\
H^*(S)\ar[r]_{\sqrt{\rm td_\pi}\cdot\pi^*}&H^*(Y).}$$
Note that the usual $\sqrt{\rm td_\pi}\cdot\pi^*$ on the bottom is indeed the map on cohomology induced by  the functor $\pi^*\colon\Db(S,\alpha)\to\Db(Y,\pi^*\alpha)$,
which a priori depends on the choice of the lifts of $\alpha$ and $\pi^*\alpha$ to classes in $H^2(S,\QQ)$ and
$H^2(Y,\QQ)$, respectively, for which we choose
$B$ and $\pi^*B$. The commutativity of the upper and  the lower squares is trivial.
The commutativity of the middle square is an easy case of \cite[Thm.\ 1.2]{MS}.
Next consider $\Psi\coloneqq L\otimes(~)\colon\Db(Y,\alpha)\to\Db(Y)$ and the induced diagram
(where $\psi$ is defined by the requirement of commutativity)
$$\xymatrix{\HoH_0(Y,\pi^*\alpha)\ar @{} [dr] |{\circlearrowleft}\ar[d]^-\wr\ar[r]^-{\Psi^{\HoH_*}}&\HoH_0(Y)\ar[d]^-=\\
\HoH_0(Y)\ar[d]_I\ar[r]^\psi&\HoH_0(Y)\ar[dd]^I\\
H^*(Y)\ar[d]_{\exp(\pi^*B)}& \\
H^*(Y)\ar[r]^{\Psi^{H,\pi^*B}}&H^*(Y).}$$
By definition, $\Psi^{H,\pi^*B}$ is given by multiplication with ${\rm ch}^{\pi^*(-B)}(L)=\exp(-\pi^*B)\cdot\exp({\rm c}_1(L))$.
Here, use that $L^r$ is an untwisted line bundle and define ${\rm c}_1(L)\coloneqq(1/r){\rm c}_1(L^r)\in H^{1,1}(Y,\QQ)$. See
\cite[Sec.\ 1]{HuSt} for the conventions concerning twisted Chern classes. In particular, $\Psi^{H,\pi^*B}\circ\exp(\pi^*B)=\exp({\rm c}_1(L))$
and, therefore, it suffices to prove the commutativity of the diagram
\begin{equation}\label{eqn:untwHKR}
\xymatrix{\HoH_0(Y)\ar[d]_I\ar[rr]^\psi&&\HoH_0(Y)\ar[d]^I\\
H^*(Y)\ar[rr]^{\exp({\rm c}_1(L))}&& H^*(Y),}
\end{equation}
which no longer depends on $B$ and is a special case of Lemma \ref{lem:untwHKR} below.

This concludes the proof  of the commutativity of the diagram (\ref{eqn:remcom}) and hence of \cite[Prop.\ 6.2]{AT} in our
twisted setting. More precisely, if a first order deformation of $X$ in $D_d$ given by a class $\kappa_X\in H^1(T_X)$
corresponds via the interpretation of $D_d$ as period domain for $X$ and $S$
to a first order deformation $\kappa_S\in H^1(T_S)$, then $\Phi^{\HoH^2}\colon
\HoH^2(X)\to\HoH^2(S,\alpha)$ sends $\kappa_X$ to $\kappa_S$.

\medskip


To conclude the proof one has to prove that the kernel $P\in\Db((S,\alpha^{-1})\times X)$ deforms sideways, for which we
again apply Yoshioka's untwisting technique. Instead of attempting to deform the twisted $P$ sideways with $(S,\alpha)\times X$ we deform
the untwisted $\tilde P$. As the condition describing the full subcategory $\Db((S,\alpha^{-1})\times X)\cong\Db((Y\times X)/(S\times X))\subset \Db(Y\times X)$ is open, any deformation of $\tilde P$ will automatically induce a deformation of $P$.\footnote{This is confirmed by the observation that under the natural isomorphisms
$$\Ext^2_{(S,\alpha^{-1})\times X}(P,P)\cong\Ext^2_{(Y,\pi^*\alpha^{-1})\times X}(\pi^*P,\pi^*P)\cong\Ext^2_{Y\times X}(\tilde P,\tilde P)$$
the obstruction $o(P)\in \Ext^2_{(S,\alpha^{-1})\times X}(P,P)$  to deform $P$ sideways to first order is  first mapped
to $o(\pi^*P)$ and then to $o(\tilde P)-{\rm id}_{\pi^*P}\otimes o(\ko_\pi(-1))$. The latter,  however,
equals the obstruction $o(\tilde P)\in  \Ext^2_{Y\times X}(\tilde P,\tilde P)$ for $\tilde P$, because $\ko_\pi(-1)$ clearly deforms sideways.}
The decomposition (\ref{eqn:compFM}) leads to a diagram
$$\xymatrix{\HoH^2(X)=\Ext^2_{X\times X}(\ko_{\Delta_X},\ko_{\Delta_X})\ar[r]&\Ext^2_{Y\times X}(\tilde P,\tilde P)\ar[r]^-\sim&\Ext^2_{(S,\alpha^{-1})\times X}(P,P)\\
&\Ext^2_{Y\times Y}(\ko_{\Delta_Y},\ko_{\Delta_Y})\ar@{=}[d]\ar[u]\ar@{-->}[r]&\Ext^2_{(S,\alpha^{-1})\times (S,\alpha)}(\ko_{\Delta_S},\ko_{\Delta_S})\ar@{=}[d]\ar[u]_-\wr\\
&\HoH^2(Y)\ar@{-->}[r]&\HoH^2(S,\alpha).}$$
Recall that $\Phi_R^{\HoH_*}$ is defined for any FM-functor $\Phi_R$, whereas in order to define $\Phi_R^{\HoH^*}$ one
needs $\Phi_R$ to be fully faithful, which is the case for $\Phi_P$ and $\Phi_Q=\tilde{(~)}$. So, both maps
in 
\begin{eqnarray*}\HoH^2(X)\to &\HoH^2(S,\alpha)&\lto\HoH^2(Y)\\
\kappa_X\mapsto&\kappa_S&\mapslto\kappa_Y
\end{eqnarray*} are  well defined, where as above
$\kappa_X\in H^1(T_X)\subset\HoH^2(X)$ corresponds to $\kappa_S\in H^1(T_S)\subset
\HoH^2(S,\alpha)$ (via their periods or, equivalently, via $\Phi^{\HoH^2}$)
and $\kappa_Y$ is determined by our  pre-chosen deformation $E_t$ of $E$.

Now by \cite{HT} the obstruction $o(\tilde P)$ can be expressed as
$$o(\tilde P)=(\kappa_Y,\kappa_X)\circ{\rm At}(\tilde P).$$
(Unfortunately, an analogous formula in the twisted case  is not available.)
 The crucial \cite[Thm.\ 7.1]{AT}, which goes back to
Toda \cite{Toda}, proves that in the untwisted case $o(P)=0$ if $\kappa_X$ is mapped to $\kappa_S$ under
$\HoH^2(X)\to \HoH^2(S)$. However, in the twisted situation one has to face the additional problem  that there is no 
natural map $\HoH^2(X)\to\HoH^2(Y)$. Nevertheless, the argument in \cite{AT} goes through
essentially unchanged as follows. Using the same notation, one writes
$$o(\tilde P)=\pi_1^*\kappa_Y\circ {\rm At}_Y(\tilde P)+\pi_2^*\kappa_X\circ {\rm At}_X(\tilde P)\in\Ext^2(\tilde P,\tilde P).$$
The first term is the image  of $\pi_1^*\kappa_Y\circ{\rm At}_1(\ko_{\Delta_Y})\in \Ext^2(\ko_{\Delta_Y},\ko_{\Delta_Y})=\HoH^2(Y)$
which is just $\kappa_Y$, whereas the second one is the image of $-\pi_1^*\kappa_X\circ{\rm At}_2(\ko_{\Delta_X})\in\Ext^2(\ko_{\Delta_X},\ko_{\Delta_X})=\HoH^2(X)$ which is just $-\kappa_X$. 
Hence, to compare $\kappa_X$ and $\kappa_Y$ we do not need a map $\HoH^2(X)\to \HoH^2(Y)$
(which simply does not exist naturally), as we only need to compare their images in 
$\Ext^2(\tilde P,\tilde P)\cong\HoH^2(S,\alpha)$. Therefore it suffices to ensure that under 
$\HoH^2(X)\to\HoH^2(S,\alpha)$ the class $\kappa_X$ is mapped to $\kappa_S$, which was verified above.

\medskip

This concludes the argument proving that the FM-kernel $P$ deforms to first order with $(S,\alpha)\times X$.  The arguments 
in  \cite[Sec.\ 7.2]{AT} proving the existence of deformations of $P$ to all orders apply verbatim. Note that at the very end of the argument
one needs to apply a result of Lieblich \cite{Lieb2} saying that  the space of objects with no negative self-Exts in the derived category is an Artin stack of locally
finite presentation. Again, the result as such does not seem to be in the literature for the  twisted situation, but once again it can be 
deduced from the untwisted case by Yoshioka's trick.
\end{proof}

It remains to check the commutativity of (\ref{eqn:untwHKR}) which is a general fact. Consider a smooth variety $Z$ and
$\alpha_{ijk}\coloneqq\beta_{ij}\cdot\beta_{jk}\cdot\beta_{ki}$ with $\beta_{ij}\in\ko_{U_{ij}}^\ast$. The associated Brauer class
$\alpha\in H^2(Z,\ko_Z^\ast)$ is of course trivial and hence $\Db(Z,\{\alpha_{ijk}\})$ and $\Db(Z)$ are equivalent categories
and an explicit equivalence can be given by `untwisting by $\{\beta_{ij}\}$', i.e.\ by $E=\{E_i,\varphi_{ij}\}\mapsto \{E_i,\varphi_{ij}\cdot\beta_{ij}^{-1}\}$.
Note that  changing $\beta_{ij}$ by a cocycle $\{\delta_{ij}\}$, which would correspond to an untwisted line bundle say $M$, the
equivalence would be modified by $M^*\otimes(~)$.

Assume furthermore that $\alpha_{ijk}^r=1$. Then $\{\beta_{ij}^r\}$ is a cocycle defining a line bundle $H$ and
we define ${\rm c}_1(\beta)\coloneqq (1/r){\rm c}_1(H)\in H^{1,1}(Z)$. Explicitly, ${\rm c}_1(\beta)=\{d\log\beta_{ij}\}$.
 
\begin{lem}\label{lem:untwHKR}
The `untwisting by $\{\beta_{ij}\}$', i.e.\ the equivalence
$$\Phi\colon\Db(Z,\{\alpha_{ijk}\})\congpf\Db(Z),~E=\{E_i,\varphi_{ij}\}\mapsto \{E_i,\varphi_{ij}\cdot\beta_{ij}^{-1}\},$$
induces a commutative diagram
$$\xymatrix{\HoH_*(Z,\{\alpha_{ijk}\})\ar[r]^-{\Phi^{\HoH_*}}\ar[d]_{\rm HKR}&\HoH_*(Z)\ar[d]^{\rm HKR}\\
H^*(Z)\ar[r]_{\exp({\rm c}_1(\beta))}&H^*(Z)}$$
\end{lem}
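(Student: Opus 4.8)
\emph{Overview and the kernel.} The plan is to recognise $\Phi$ as a Fourier--Mukai functor whose kernel is supported on the diagonal and then to feed that kernel into the twisted analogue of the standard compatibility between the action of a FM functor on Hochschild homology and the cohomological transform attached to the Chern character of its kernel. Concretely, $\Phi$ is nothing but tensoring with a twisted line bundle: if $\kl$ denotes the twisted line bundle on $Z$ with local pieces $\ko_{U_i}$ and transition functions $\beta_{ij}^{-1}$ --- an $\{\alpha_{ijk}^{-1}\}$-twisted line bundle, since $\beta_{ij}^{-1}\beta_{jk}^{-1}\beta_{ki}^{-1}=\alpha_{ijk}^{-1}$ --- then $\Phi(E)\cong E\otimes\kl$ for $E\in\Db(Z,\{\alpha_{ijk}\})$, so $\Phi=\Phi_{\Delta_*\kl}$ with FM kernel $\Delta_*\kl\in\Db((Z,\{\alpha_{ijk}^{-1}\})\times Z)$. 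Since $\kl^{\otimes r}$ is the genuine line bundle with transition functions $\beta_{ij}^{-r}$, one has ${\rm ch}(\kl)=\exp({\rm c}_1(\kl))$ with ${\rm c}_1(\kl)=\pm{\rm c}_1(\beta)$, the sign being the one that produces the operator $\exp({\rm c}_1(\beta))$ of the statement.

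\emph{Hochschild homology.} For any FM functor $\Phi_\ke$, the map $\Phi_\ke^{\HoH_*}$ corresponds, under the HKR isomorphisms $I$ post-composed with $\sqrt{{\rm td}}\wedge(~)$, to the cohomological correspondence whose kernel is the Mukai vector $v(\ke)={\rm ch}(\ke)\sqrt{{\rm td}}$; in the untwisted case this is C\u{a}ld\u{a}raru's theorem \cite{Cald}, in the normalised form used above \cite[Thm.\ 1.2]{MS}. Two routes give the twisted version needed here. The direct one mimics the untwisted proof: for a diagonal kernel one computes $v(\Delta_*\kl)=\Delta_*\,{\rm ch}(\kl)$ --- the Todd factor of Grothendieck--Riemann--Roch for $\Delta$ cancels against the restriction of $\sqrt{{\rm td}}$ --- so the correspondence is $\cup\exp({\rm c}_1(\kl))$; here one needs that ${\rm ch}$ of the twisted sheaf $\Delta_*\kl$ with respect to a $B$-field lift is a genuine rational cohomology class, which holds because $\Delta_*\kl$ is supported on the diagonal, along which $\{\alpha_{ijk}^{-1}\}\boxtimes\{1\}$ restricts trivially. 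The second route avoids rewriting the formalism: fixing $r$ with $\alpha_{ijk}^r=1$, the functors $(~)\otimes\kl$, read as maps $\Db(Z,\{\alpha_{ijk}^{m}\})\to\Db(Z,\{\alpha_{ijk}^{m-1}\})$, compose after $r$ steps to the honest functor $(~)\otimes\kl^{\otimes r}$ on $\Db(Z)$, which by \cite[Thm.\ 1.2]{MS} induces $\cup\exp({\rm c}_1(\kl^{\otimes r}))$ on $\HoH_*(Z)\cong H^*(Z)$.

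\emph{Conclusion.} In the second route, let $\tau$ be the operator induced by $(~)\otimes\kl$ on $H^*(Z)=\bigoplus_{i,j}H^i(\Omega_Z^j)$ via the canonical identifications $\HoH_*(Z,\{\alpha_{ijk}^{m}\})\cong\HoH_*(Z)$ and HKR; these identifications being mutually compatible, $\tau$ is independent of $m$, and the previous paragraph gives $\tau^r=\cup\exp(r\,{\rm c}_1(\beta))$. Now $\tau$ preserves the filtration $F^p:=\bigoplus_{i+j\ge p}H^i(\Omega_Z^j)$ and acts as the identity on ${\rm gr}_F$, because tensoring with a rank-one object does not alter the leading term; the same is true of $\cup\exp({\rm c}_1(\beta))$, and in the pro-unipotent group of operators of this shape (we are over $\QQ$) the $r$-th root is unique. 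Hence $\tau=\cup\exp({\rm c}_1(\beta))$, which is the commutativity asserted.

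\emph{Main obstacle.} The substance is making the twisting harmless. For the direct route this amounts to giving ${\rm ch}(\Delta_*\kl)$ a proper meaning and checking that C\u{a}ld\u{a}raru's local HKR computation survives the twist; for the reduction route it amounts to knowing that the isomorphisms $\HoH_*(Z,\{\alpha_{ijk}^{m}\})\cong\HoH_*(Z)$ are compatible with the functors $(~)\otimes\kl$ and with HKR --- the kind of functoriality verified by Reinecke \cite[Sec.\ 4]{Reinecke}, resting in the end on the functoriality of $\Delta_!$, $\Delta_*$ and $\Delta^*$. Granting those, the rest --- the $\sqrt{{\rm td}}$ bookkeeping and the precise sign of ${\rm c}_1(\beta)$ relative to the chosen gluing convention --- is routine.
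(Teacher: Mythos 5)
Your proposal correctly isolates where the difficulty lies, but neither of your two routes actually closes it. Route 1 --- treat $\Phi$ as the FM functor with diagonal kernel $\Delta_*\kl$ and invoke a twisted analogue of C{\u{a}}ld{\u{a}}raru/Macr\`i--Stellari --- is in spirit the approach the paper takes, except that you defer the substance to your ``main obstacle'': checking that C{\u{a}}ld{\u{a}}raru's identification of HKR with the exponential of the universal Atiyah class survives the untwisting. That check \emph{is} the proof. The paper carries it out explicitly: writing $f\in\HoH_j(Z)$ as $\eta\circ\Delta_!F$ with $\eta$ the adjunction map, C{\u{a}}ld{\u{a}}raru's Prop.~4.4 identifies $\eta$ under HKR with $\exp({\rm At})$ for the universal Atiyah class ${\rm At}\colon\ko_\Delta\to\Delta_*\Omega_Z[1]$; a cocycle computation (or Buchweitz--Flenner) shows that untwisting by $\{\beta_{ij}\}$ replaces ${\rm At}$ by ${\rm At}+\Delta_*{\rm c}_1(\beta)$, hence $\exp({\rm At})$ by $\Delta_*\exp({\rm c}_1(\beta))\circ\exp({\rm At})$, while $\Delta_!F$ is unchanged; this is exactly the asserted commutativity. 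Without this (or an equivalent) local computation, Route 1 is a restatement of the lemma rather than a proof. (A minor slip: the twist $\{\alpha_{ijk}^{-1}\}\boxtimes\{1\}$ restricts to the diagonal as the cocycle $\{\alpha_{ijk}^{-1}\}$, which is a coboundary but not the trivial cocycle, so ``restricts trivially'' already presupposes the choice of $\{\beta_{ij}\}$.)

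Route 2 is a genuinely different and appealing idea, but it has a real gap at the unipotency step. Even granting that the operator $\tau$ induced by one untwisting step is independent of $m$ (itself a compatibility of the identifications $\HoH_*(Z,\{\alpha^m_{ijk}\})\cong\HoH_*(Z)$ with the functors $(~)\otimes\kl$ that requires an argument), you obtain from the untwisted Macr\`i--Stellari theorem only that $\tau^r=\exp(r\,{\rm c}_1(\beta))\cup(~)$. To extract $\tau$ you assert that $\tau$ preserves $F^p=\bigoplus_{i+j\ge p}H^i(\Omega_Z^j)$ and is the identity on ${\rm gr}_F$, ``because tensoring with a rank-one object does not alter the leading term''. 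But that is precisely the claim that $\tau={\rm id}+(\text{cup with classes in }\bigoplus_{k\ge1}H^k(\Omega_Z^k))$, i.e.\ the content of the lemma up to identifying the class; nothing in the abstract definition of $\Phi^{\HoH_*}$ hands you this filtration behaviour. Nor can it be recovered from $\tau^r$ alone: an invertible operator whose $r$-th power is unipotent need not itself be unipotent (its semisimple part may have nontrivial $r$-th roots of unity as eigenvalues), so the uniqueness of unipotent $r$-th roots does not apply until unipotency of $\tau$ is established --- and establishing it leads you back to the Atiyah-class computation of Route 1.
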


The commutativity of (\ref{eqn:untwHKR}) then follows from the observation that $L\otimes(~)$ can be written as
the composition of the `untwisting by $\{\beta_{ij}\}$' as above with the equivalence $ \kl\otimes(~)$. Here, $\kl$ is the untwisted line bundle given by
$\{\psi_{ij}\cdot\beta_{ij}\}$, where $L$ itself is the $\{\alpha_{ijk}^{-1}\}$-twisted line bundle given by $\{\psi_{ij}\}$.

Indeed, for $\Psi\coloneqq \kl\otimes(~)\colon\Db(Z)\congpf\Db(Z)$  the commutativity of
$$\xymatrix{\HoH_*(Z)\ar[rr]^-{\Psi^{\HoH_*}}\ar[d]_{\rm HKR}&&\HoH_*(Z)\ar[d]^{\rm HKR}\\
H^*(Z)\ar[rr]_{\exp({\rm c}_1(\kl))}&&H^*(Z)}$$
is an easy special case of \cite[Thm.\ 1.2]{MS}\footnote{Note that ${\rm td}^{1/2}\wedge$ can be dropped here and in the lemma, as it commutes with $\exp({\rm c}_1(\kl))$.}, which can be proved by a direct calculation. The proof of the lemma is a variant of this computation.

\begin{proof}
Consider the universal Atiyah class ${\rm At}\colon\ko_\Delta\to\Delta_*\Omega_Z[1]$. Twisted with a line bundle of the form
$M\boxtimes M^*$ it yields a map ${\rm At}_M\colon\ko_\Delta\to\Delta_*\Omega_Z[1]$. The usual formula ${\rm c}_1(E\otimes M)={\rm c}_1(E)+\rk\, E\cdot{\rm c}_1(M)$ corresponds to the universal formula ${\rm At}_M=\alpha+\Delta_*{\rm c}_1(M)$, which can be checked
by using arguments of \cite{BF1,BF2} or a direct cocycle computation. Here, ${\rm c}_1(M)$
is viewed as a map $\ko_Z\to \Omega_Z[1]$ which can be pushed forward via $\Delta$. Similarly, the exponential
$\exp({\rm At})\colon\ko_\Delta\to\bigoplus \Delta_*\Omega_Z^i[i]$ (see \cite{Cald}) twisted with $M\boxtimes M^*$
is given by $\exp({\rm At})_M=\Delta_*\exp({\rm c}_1(M))\circ\exp({\rm At})$.

Let now $f\in\HoH_j(Z)=\Ext^j_{Z\times Z}(\Delta_!\ko_Z,\Delta_*\ko_Z)$ and denote by $F\in\Ext^j_Z(\ko_Z,\Delta^*\Delta_*\ko_Z)$
 its image under $\Ext^j_{Z\times Z}(\Delta_!\ko_Z,\Delta_*\ko_Z)\cong\Ext^j_Z(\ko_Z,\Delta^*\Delta_*\ko_Z)$.
 So if  $\eta\colon (\Delta_!\Delta^*)\Delta_*\ko_Z\to\Delta_*\ko_Z$ denotes  adjunction, then $f=\eta\circ\Delta_!F$.
Due to \cite[Prop.\ 4.4]{Cald}, the latter is under the HKR isomorphism given by $\exp({\rm At})$,
so $$\xymatrix{\eta\colon (\Delta_!\Delta^*)\Delta_*\ko_Z\cong\bigoplus\Delta_*(\Omega^i[i]\otimes\omega_Z^{-1}[-d]))\cong
\bigoplus\Delta_* (\Omega_Z^{d-i})^*[i-d]\ar[rr]^-{\exp({\rm At})}&&\Delta_*\ko_Z.}$$ The image of $f$ under $\kl\otimes(~)$ is given by
tensoring with $\kl\boxtimes \kl^*$. The push-forward $\Delta_!F$ remains unchanged by tensoring with $\kl\boxtimes\kl^*$
and by the above $\eta$ changes by composing with $\Delta_*\exp({\rm c}_1(\kl))$.

Literally the same argument applies to the untwisting by $\{\beta_{ij}\}$ for which one has to observe that 
the universal Atiyah class ${\rm At}\colon\ko_\Delta\to\Delta_*\Omega_Z[1]$ on $(Z,\{\alpha_{ijk}^{-1}\})\times (Z,\{\alpha_{ijk}\})$
under untwisting by $\{\beta_{ij}\}$ becomes ${\rm At}+\Delta_*{\rm c}_1(\beta)\colon\ko_\Delta\to\Delta_*\Omega_Z[1]$ on $Z\times Z$.
\end{proof}

\section{Proofs}\label{sec:Proofs}

\subsection{Proof of Theorem \ref{thm:noFMvg}}\label{sec:ProofAut}

i) According to Corollary \ref{cor:Z3Z}, for every smooth cubic $X\subset\PP^5$ there exists a distinguished FM-autoequivalence
$\Phi_0\colon\ka_X\congpf\ka_X$ of infinite order which acts as the identity on $T(\ka_X)$, so it is symplectic, and such that  $\Phi_0^3$
is the double shift $E\mapsto E[2]$. We have to show that for the  very general cubic every symplectic FM-equivalence $\Phi$
is a power of $\Phi_0$.

As $\widetilde H^{1,1}(\ka_X,\ZZ)\cong A_2$ for  very general $X$  and $\Phi^H={\rm id}$ on $T(\ka_X)=A_2^\perp$,
the induced action $\Phi^H$ is contained in ${\rm O}(A_2)$. Clearly, any Hodge isometry of $\widetilde H(\ka_X,\ZZ)$
that is the identity on $A_2^\perp$ stays a Hodge isometry for all deformations of $X$. Therefore, applying
Theorem \ref{thm:Defo1}, $\Phi$ deforms to FM-autoequivalences $\Phi_t\colon\ka_{\kx_t}\cong \ka_{\kx_t}$ for
cubics $\kx_t$ in a Zariski open neighbourhood
$U\subset\kc$ of $X$ inside the moduli space of smooth cubics.

Then for all but finitely many $d$ satisfying ($\ast$$\ast$) the intersection $U\cap\kc_d$ is non-empty (and open) and, therefore,
by \cite[Thm.\ 1.1]{AT} there exists $t\in U$ such that $\ka_{\kx_t}\cong\Db(S)$ for some K3 surface $S$. Due to \cite[Thm.\ 2]{HMS}, autoequivalences
of $\Db(S)$ are orientation preserving and hence $\Phi^H\in{\mathfrak A}_3\cong\ZZ/3\ZZ$, cf.\
Remark \ref{rem:OA_2}.  Thus, by composing with some power of $\Phi_0$, we may assume that $\Phi^H={\rm id}$.

Now apply Corollary \ref{cor:locustw} and Theorem \ref{thm:genastastast}, to be proved below,
to conclude that there exists $t\in U$ such that $\ka_{\kx_t}\cong\Db(S,\alpha)$ for a twisted K3 surface $(S,\alpha)$ not admitting any $(-2)$-class.
Indeed, $$(\kc_{\rm K3'}\cap U)\setminus\kc_{\rm sph}\ne\emptyset,$$
where $\kc_{\rm K3'}\coloneqq\bigcup_{{\rm (\ast\ast')}}\kc_d\subset\kc$ and $\kc_{\rm sph}\subset\kc$ is the image of $D_{\rm sph}$.
By \cite[Thm.\ 2]{HMS2}, we know that then $\Phi_t$ is isomorphic to an even shift $E\mapsto E[2k]$. It is easy to see that $k$
is independent of $t$.

The locus of points $U_0\subset U$ such that $\Phi_t\cong[2k]$ for $t\in U_0$ is Zariski open and by the above non-empty.
Therefore, for every $X\in \kc$ in the intersection of all $U_0\subset\kc$ the assertion holds. 
But this intersection is certainly countable, as FM-kernels  are parametrized by countably many products of Quot-schemes.

\medskip

ii) Now consider a non-special cubic $X$, i.e.\ $X\in\kc\setminus\bigcup \kc_d$, and an arbitrary $\Phi\in\Aut(\ka_X)$. By composing with the shift functor
$[1]$ if necessary, we may assume that $\Phi^H$ acts
trivially on the discriminant group $A_{A_2}\cong A_{A_2^\perp}$. But then the induced Hodge isometry
of $T(\ka_X)\cong A_2^\perp$ extends to a Hodge isometry of $H^4(X,\ZZ)$ that respects $h$. By the Global Torelli
theorem \cite{Voisin,Loo,Char2} it is  therefore induced by an  automorphism $f\in\Aut(X)$, which clearly acts trivially on the orthogonal
complement of $h^\perp\subset H^*(X,\ZZ)$ and hence as the identity on $A_2\subset\widetilde H(\ka_X,\ZZ)$. Moreover, since $f$ respects
$H^{3,1}(X)$,  the action of $f$ in $\widetilde H(\ka_X,\ZZ)$ preserves the orientation.

So, composing, if necessary, $\Phi$ with the shift functor and an automorphism,
we reduce to the case $\Phi\in\Aut_s(\ka_X)$. As $X$ is non-special, i.e.\ $A_2\cong\widetilde H^{1,1}(\ka_X,\ZZ)$, we
can deform $\Phi$ sideways as above until it can be interpreted as an autoequivalence of a category of the form $\Db(S)$,
which implies that it is orientation preserving.
This eventually proves that for every non-special cubic the image
of $\rho\colon\Aut(\ka_X)\to\Aut(\widetilde H(\ka_X,\ZZ))$ is the subgroup of orientation preserving Hodge isometries.
\qed

\begin{remark}
We expect the first assertion in Theorem \ref{thm:noFMvg} to hold for every non-special cubic, i.e.\ for all $X\in\kc\setminus\bigcup\kc_d$, but  this would require to show that  if $\Phi\in\Aut(\ka_X)$ deforms to the identity functor and $\widetilde H^{1,1}(\ka_X ,\ZZ)\cong A_2$,
then $\Phi\cong{\rm id}$. The techniques of \cite{HMS2} should be useful  here, but they require the existence of stability conditions.

Furthermore, one would also expect that any $\Phi\in\Aut(\ka_X)$ of any cubic preserves the natural orientation.
\end{remark}

\subsection{Proof of Theorem \ref{thm:genastastast}}\label{sec:thm:genastastast}
Assertion i) follows from Theorem \ref{thm:twHas} and Proposition \ref{prop:indHodge}. For the converse,
fix $d$ satisfying ($\ast$$\ast'$). Then for any smooth cubic $X\in\kc_d$ there exists
a Hodge isometry 
\begin{equation}\label{eqn:HIstart}
\varphi\colon\widetilde H(S,\alpha,\ZZ)\congpf\widetilde H(\ka_X,\ZZ)
\end{equation}
for some twisted K3 surface $(S,\alpha)$. In fact, this Hodge isometry can be chosen globally
over the period domain $D_d$ (or some appropriately constructed covering $\tilde\kc_d$ of $\kc_d$, see \cite{AT}).
The aim is to show that generically this Hodge isometry is induced by an equivalence
$\ka_X\cong\Db(S,\alpha)$ (up to changing the orientation).

The starting point for the argument is \cite[Thm.\ 4.1]{AT}, which is based on Kuznetsov's work \cite{Kuz1} and on
the description of the image
of the period map for cubic fourfolds due to Laza \cite{Laza} and Looijenga \cite{Loo}. Combined, these results show that
for every $d$ satisfying ($\ast$$\ast'$) (but in fact  ($\ast$) is enough) there exists a smooth cubic
$X\in\kc_8\cap\kc_d$, a K3 surface $S_0$ and an equivalence
$$\Phi_0\colon\ka_X\congpf\Db(S_0).$$
By \cite{AT} or Proposition \ref{prop:indHodge}, any such $\Phi_0$ induces
a Hodge isometry $\Phi_0^H\colon\widetilde H(\ka_X,\ZZ)\congpf\widetilde H(S_0,\ZZ)$ (usually completely unrelated to (\ref{eqn:HIstart})). Consider now the composition
$$\psi\coloneqq\Phi_0^H\circ\varphi\colon\widetilde H(S,\alpha,\ZZ)\congpf \widetilde H(\ka_X,\ZZ)\congpf \widetilde H(S_0,\ZZ).$$
By modifying $\varphi$ (globally over $D_d$) if necessary (use Lemma \ref{lem:revor}.), we may assume that  $\psi$ preserves the orientation and
then \cite{HuSt2} applies and shows that there exists an equivalence
$\Psi\colon\Db(S,\alpha)\congpf\Db(S_0)$ with $\Psi^H=\psi$.
Then the equivalence $$\Phi\coloneqq\Phi_0^{-1}\circ\Psi\colon\Db(S,\alpha)\congpf\Db(S_0)\congpf\ka_X$$
satisfies $\Phi^H=\varphi$.

We can now forget about $S_0$ and only keep $X$ and $S$ and the equivalence $\Phi=\Phi_P$ with
$P\in\Db((S,\alpha^{-1})\times X)$. Then consider the two families $\kx_t$ and $(\ks_t,\alpha_t)$ over
$D_d$ (or rather $\tilde\kc_d$ in order to use the Zariski topology) of cubics resp.\ twisted K3 surfaces with $X=\kx_0$, $S=\ks_0$,
for which $\varphi$ defines Hodge isometries $\widetilde H(\ks_t,\alpha_t,\ZZ)\congpf \widetilde H(\ka_{\kx_t},\ZZ)$ for all $t$.
As $\Phi\colon\Db(S,\alpha)\congpf\ka_X$ induces $\varphi$, Theorem \ref{thm:Defo2} applies and shows that $\Phi$ can be deformed
to equivalences $\Phi_t\colon\Db(\ks_t,\alpha_t)\congpf\ka_{\kx_t}$ for all $t$ in a Zariski open neighbourhood of $0\in\tilde\kc_d$.\qed

%


\subsection{Proof of Theorem \ref{thm:HodgeA}}\label{sec:proofthm:HodgeA} 
The first assertion of the theorem has been proved already
as  Corollary \ref{cor:verygeneralnoFM}.  For ii) and iii)  recall that  any FM-equivalence $\ka_X\cong\ka_{X'}$  induces a Hodge isometry 
$\widetilde H(\ka_X,\ZZ)\cong\widetilde H(\ka_{X'},\ZZ)$, cf.\ Proposition \ref{prop:equivHodgeA}.
So it remains to prove the converse for generic $X\in\kc_d$ with $d$ satisfying ($\ast$$\ast'$)
resp.\  very general $X\in\kc_d$ for arbitrary $d$.
The first case is easy, as then, by Theorem \ref{thm:genastastast}, $\ka_X\cong\Db(S,\alpha)$ and $\ka_{X'}\cong\Db(S',\alpha')$ for twisted
K3 surfaces $(S,\alpha)$ resp.\ $(S',\alpha')$.
The assertion then follows from \cite{HuSt2} and Lemma \ref{lem:revor}.

For the second case consider the correspondence 
$$Z\coloneqq\{(X,X',\varphi)~|~X\in\kc_d\text{ and }\varphi\colon \widetilde H(\ka_X,\ZZ)\congpf\widetilde H(\ka_{X'},\ZZ)\}$$
of smooth cubics $X,X'$ with $X\in\kc_d$  and a Hodge isometry $\varphi$. (Note that with $X$ also $X'\in\kc_d$.)
This correspondence consists of countably many components $Z_i\subset Z$
and for the image of a component $Z_0\subset Z$
under the first projection $\pi\colon Z\to \kc_d$ one either has   $\pi(Z_0)\subset \kc_d\cap\bigcup_{d'\ne d}\kc_{d'}$ or $\pi(Z_0)\subset\kc_d$
is dense.

As we are interested in very general $X\in\kc_d$ only, we may assume that we are in the latter situation. Then  by \cite[Thm.\ 1.1]{AT}, cf.\ Section \ref{sec:ProofAut}, one finds a $(X,X',\varphi)\in Z_0$ for which there exist
K3 surfaces $S$ and $S'$ and FM-equivalences
\begin{equation}\label{eqn:DBA2}\Psi\colon\ka_X\congpf\Db(S)\text{ and }\Psi'\colon\ka_{X'}\congpf\Db(S').
\end{equation}
By Proposition \ref{prop:indHodge}, $\Psi$ and $\Psi'$ induce Hodge isometries $\Psi^H$ resp.\
$\Psi'^H$, which composed with  $\varphi$ yield a   Hodge isometry
$$\xymatrix{\varphi_0\colon\widetilde H(S,\ZZ)\ar[r]^-\sim_{(\Psi^{-1})^H}&\widetilde H(\ka_X,\ZZ)\ar[r]^-\sim_-{\varphi}&\widetilde H(\ka_{X'},\ZZ)\ar[r]^-\sim_{\Psi'^H}&\widetilde H(S',\ZZ).}$$
We may assume that $\varphi_0$ is orientation preserving and, thus, induced by a FM-equivalence  $\Phi_0\colon
\Db(S)\congpf\Db(S')$. Composing the latter with the equivalences
(\ref{eqn:DBA2}) yields a FM-equi\-valence $\Phi\colon\ka_X\cong\ka_{X'}$  inducing $\varphi$.
Now use Theorem \ref{thm:Defo1} to deform $\Phi$ sideways to FM-equivalences
$\Phi_t\colon\ka_{X_t}\congpf\ka_{X'_t}$ for all points $(X_t,X'_t,\varphi_t\equiv\varphi)$ in a 
Zariski dense open subset $U_0\subset Z_0$. 

Hence, for all $X\in \bigcap \pi(U_i)$, with the intersection over all components $Z_i\subset Z$ (dominating $\kc_d$),
the existence of a Hodge isometry $\widetilde H(\ka_X,\ZZ)\cong\widetilde H(\ka_{X'},\ZZ)$ implies the existence of 
a FM-equivalence $\ka_X\cong\ka_{X'}$.
\qed


\end{document}